\documentclass[11pt]{amsart}
\usepackage{amssymb,amsmath,txfonts,mathrsfs,mathtools,titletoc, tikz, stmaryrd, bm}
\usepackage{color}
\usepackage{graphicx}
\usepackage{float}
\usepackage{esint}
\usepackage{hyperref}

\usepackage[alphabetic]{amsrefs}
\newtheorem{theorem}{Theorem}[section]
\newtheorem{prop}[theorem]{Proposition}
\newtheorem{lemma}[theorem]{Lemma}
\newtheorem{remark}[theorem]{Remark}

\newtheorem{question}[theorem]{Question}
\newtheorem{definition}[theorem]{Definition}

\newtheorem{cor}[theorem]{Corollary}
\newtheorem{example}[theorem]{Example}
\newtheorem{conj}[theorem]{Conjecture}

\numberwithin{equation}{section}
\allowdisplaybreaks[4]

\usepackage[T1]{fontenc}

\def\pf{{\it Proof:}~}
\def\R{{\bf\mathbb R} }
\begin{document}
	
\title{Integrals and Rigidity on Manifolds with Nonnegative Ricci Curvature}
\author{Zixuan Chen, Guoyi Xu, Shuai Zhang}
\address{Zixuan Chen\\ Department of Mathematical Sciences\\Tsinghua University, Beijing\\P. R. China}
\email{chenzixu25@mails.tsinghua.edu.cn}
\address{Guoyi Xu\\ Department of Mathematical Sciences\\Tsinghua University, Beijing\\P. R. China}
\email{guoyixu@tsinghua.edu.cn}
\address{Shuai Zhang\\ Department of Mathematical Sciences\\Tsinghua University, Beijing\\P. R. China}
\email{zhangshu22@mails.tsinghua.edu.cn}
\date{\today}

\begin{abstract}
We prove the general sharp mean value inequality for non-negative superharmonic functions and its corresponding rigidity, which removes the radius restriction of Schoen-Yau's classical result about this inequality. And we obtain an explicit formula of the asymptotic scaling invariant integral of weighted scalar curvature, on three dimensional complete Riemannian manifolds with non-negative Ricci curvature and maximal volume growth. As an application, we use this formula to give another proof of Hamilton's pinching conjecture in this case.
\\[3mm]
Mathematics Subject Classification: 35K15, 53C20.
\end{abstract}
\thanks{G. Xu was partially supported by NSFC 12141103.}

\maketitle

\titlecontents{section}[0em]{}{\hspace{.5em}}{}{\titlerule*[1pc]{.}\contentspage}
\titlecontents{subsection}[1.5em]{}{\hspace{.5em}}{}{\titlerule*[1pc]{.}\contentspage}
\tableofcontents

\section{Introduction}

The interplay between curvature, topology, and analysis on Riemannian manifolds forms a central theme in geometric analysis. A particularly rich line of inquiry investigates how non-negative Ricci curvature, govern the behavior of solutions to elliptic and parabolic partial differential equations, and conversely, how the properties of such solutions reveal deep geometric and topological information about the underlying space. 

Two fundamental tools in this interplay are the mean value formula for harmonic functions and mean value inequalities for subharmonic functions, which provide pointwise control by averages and are deeply connected to the underlying geometry. 

The classical mean value formula characterizes harmonic functions on Euclidean space. We can not expect the mean value formula to hold for general harmonic functions on Riemannian manifolds. Instead, after introducing suitable weight function $b(x) = G(q,x)^{1/(2-n)}$ (where $G(q,x)$ is the minimal positive Green function), which serves as an "analytic" substitute for the distance function; we can show the similar weighted mean value formula holds. 

On general Riemannian manifolds, a mean value inequality often holds for subharmonic functions, but the optimal constant and the geometric implications of the case of equality are subtle questions. A classical result by Schoen and Yau (\cite[Proposition $2.1$, Chapter $1$]{SY}) established a sharp mean value inequality for non-negative superharmonic functions on manifolds with non-negative Ricci curvature. However, their result required the radius of geodesic ball less than the injectivity radius at the center of the geodesic ball. 

One result of this paper is the sharp mean value inequality without radius restriction as follows.

\begin{theorem}[Sharp Mean Value Inequality and Rigidity]\label{thm main-1}
{Let $(M^n, g)$ be a complete Riemannian manifold with $\operatorname{Ric} \ge 0$. Suppose $f\in C^\infty(M^n)- \{0\}$ satisfies $f\geq 0$ and $\Delta \leq 0$. Then 
\[
f(x) \ge \frac{1}{n\omega_n r^{n-1}} \int_{\partial B_x(r)} f \, d\sigma, \quad \quad \forall x\in M^n, r> 0.
\]
Furthermore, if equality holds for some $x$ and $r>0$, then the geodesic ball $B_x(r)$ is isometric to the Euclidean ball $B^n(r) \subseteq \mathbb{R}^n$.
}
\end{theorem}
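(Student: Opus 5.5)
The approach is the monotonicity of spherical means, made rigorous past the cut locus by working in the sense of distributions. Fix $x$ and define the normalized spherical average
\[
\phi(r)=\frac{1}{n\omega_n r^{n-1}}\int_{\partial B_x(r)} f\,d\sigma ,
\]
which is defined for every $r>0$: the cut locus has measure zero, and $\partial B_x(r)$ can be parametrized by the exponential map over the star-shaped segment domain. Since $\phi(r)\to f(x)$ as $r\to 0$, it suffices to show that $\phi$ is non-increasing.

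To compute $\phi$, write the volume element in geodesic polar coordinates as $\mathcal A(r,\theta)\,dr\,d\theta$ for $r<c(\theta)$, where $c(\theta)$ is the cut distance, and set $\mathcal A=0$ beyond the cut point. Then
\[
\phi(r)=\frac{1}{n\omega_n}\int_{S^{n-1}} f(\exp_x r\theta)\,\frac{\mathcal A(r,\theta)}{r^{n-1}}\,\chi_{\{r<c(\theta)\}}\,d\theta .
\]
Laplacian comparison ($\operatorname{Ric}\ge0$) gives $\partial_r\log\mathcal A\le (n-1)/r$ before the cut point, so $\mathcal A/r^{n-1}$ is non-increasing in $r$ and drops to $0$ at $c(\theta)$. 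Differentiating,
\[
\phi'(r)\le \frac{1}{n\omega_n r^{n-1}}\int_{\partial B_x(r)} \partial_r f\,d\sigma ,
\]
where the inequality comes from $f\ge0$ together with the monotonicity and the jump to zero. More precisely, this is an inequality between measures in $r$, since $\phi$ may only be of bounded variation, with negative jumps at the cut locus.

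It remains to show $\int_{\partial B_x(r)}\partial_r f\le 0$. By the divergence theorem this equals $\int_{B_x(r)}\Delta f\le 0$. The divergence theorem on a ball with non-smooth boundary is justified by integrating the identity $\int_{B_x(r)}\Delta f=\int_{S^{n-1}}\int_0^{\min(r,c)}\partial_t\big(\partial_t f\,\mathcal A\big)\,dt\,d\theta$, which is cleaner to carry out for a.e.\ $r$ via the coarea formula. Hence $\phi$ is non-increasing, which gives the inequality for all $r>0$ with no injectivity-radius restriction. The main obstacle is exactly this bookkeeping of the cut locus: doing the computation in polar coordinates, with integrands set to zero beyond $c(\theta)$, turns every cut-locus contribution into a favourable sign because $f\ge 0$.

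For rigidity, suppose equality holds at some $r_0>0$. Then $\phi$ is constant on $(0,r_0]$, so every inequality used above is an equality for $r<r_0$. First, the derivative $\partial_r(\mathcal A/r^{n-1})$ must vanish wherever $f>0$. Since $f\ge0$ is superharmonic and not identically zero, the strong minimum principle gives $f>0$ everywhere. Second, there can be no cut-point jumps, so $c(\theta)\ge r_0$ for all $\theta$. Therefore $\mathcal A(r,\theta)=r^{n-1}$ on $B_x(r_0)$. Equality in the Laplacian comparison (the Bishop volume equality case) then forces the radial Jacobi fields to be Euclidean, i.e.\ the metric in normal coordinates is flat. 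Hence $\exp_x\colon B^n(r_0)\to B_x(r_0)$ is an isometry; it is injective because there are no cut points inside. As a by-product, equality also forces $\Delta f\equiv0$ on $B_x(r_0)$.
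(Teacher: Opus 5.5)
Your architecture is essentially the paper's: polar coordinates over the segment domain, Bishop's monotonicity of $\mathcal{A}/r^{n-1}$ together with $f\ge 0$ to give every cut-locus drop a favourable sign, and reduction to $\int_{B_x(r)}\Delta f\le 0$. Your BV/measure-derivative argument is a legitimate substitute for the paper's nested-interval argument.

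\textbf{Main gap: the divergence step.} The gap is in the step the paper treats as its main new tool, the divergence theorem on $B_x(r)$ beyond the cut locus. The identity you propose,
\[
\int_{B_x(r)}\Delta f=\int_{S^{n-1}}\int_0^{\min(r,c(\theta))}\partial_t\big(\partial_t f\,\mathcal{A}\big)\,dt\,d\theta ,
\]
is false. In polar coordinates $\mathcal{A}\,\Delta f=\partial_t(\mathcal{A}\,\partial_t f)+\mathcal{A}\,\Delta_{\partial B_x(t)}f$. The tangential term does not integrate to zero over $\Sigma(t)$, because $\exp_x(t\Sigma(t))$ is only part of a sphere, with boundary on the cut locus. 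Likewise, integrating the radial term up to $\min(r,c(\theta))$ leaves the boundary contribution $\int_{\{c\le r\}}(\mathcal{A}\,\partial_t f)(c(\theta),\theta)\,d\theta$ from cut points inside the ball. These two cut-locus terms cancel each other (at a generic cut point two minimal geodesics arrive from opposite sides), but neither vanishes on its own.

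Concretely, take the flat cylinder $S^1\times\mathbb{R}$ of circumference $2\pi$ and $f=s^2$, with $s$ the $\mathbb{R}$-coordinate. For $r>\pi$ your right-hand side exceeds $\int_{B_x(r)}\Delta f=2|B_x(r)|$ by $8\pi^2(\tan\phi_0-\phi_0)>0$, where $\cos\phi_0=\pi/r$.

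The flux identity $\int_{B_x(r)}\Delta f=\int_{\partial B_x(r)}\partial_r f$ is nevertheless true for a.e.\ $r$, but it needs a different proof. One option is the paper's route (Lemmas~\ref{lem geodesic ball has locally finite perimeter}, \ref{lem divergence for geodesic ball}, \ref{lem generalized divergence theorem for geodesic ball}). There, $B_x(r)$ has finite perimeter, and for $r\notin Z$ its reduced boundary coincides $\mathcal{H}^{n-1}$-a.e.\ with $\exp_x(r\Sigma(r))$, where the outer normal is $\partial_r$. The Gauss--Green theorem for sets of finite perimeter then applies. The other option is to test $\Delta f$ against the Lipschitz cutoff $\eta_\epsilon(\rho)$ on all of $M$ and integrate by parts, which produces no boundary terms. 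Then apply the coarea formula to $\rho$, using $|\nabla\rho|=1$ a.e.\ and $\nabla\rho=\partial_r$ off the cut locus.

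\textbf{Secondary gap: exceptional radii.} Your polar formula for $\phi$ uses $\chi_{\{r<c(\theta)\}}$. At radii where $\partial B_x(r)$ meets the cut locus in a set of positive $\mathcal{H}^{n-1}$-measure (possible only for $r$ in the paper's null set $Z$), it is therefore not the spherical mean. On round $\mathbb{RP}^n$ at $r=\pi/2$, the whole of $\partial B_x(\pi/2)\cong\mathbb{RP}^{n-1}$ is cut locus, and your formula returns $0$.

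Since the theorem is claimed for every $r>0$, you must add the following. By the area formula,
$\int_{\partial B_x(r)}f\le\int_{\{c\ge r\}}f(\exp_x r\theta)\,\mathcal{A}(r,\theta)\,d\theta=n\omega_n r^{n-1}\lim_{s\uparrow r}\phi(s)\le n\omega_n r^{n-1}f(x)$,
where $\lim_{s\uparrow r}\phi(s)$ is the left limit of your right-continuous $\phi$. This is exactly why the paper works with $\tilde\Sigma(t)=\{\mathcal{R}\ge t\}$.

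The same care is needed in the rigidity part if equality holds at such an $r_0$. There, also note that a vanishing jump part excludes cut points only because your first deduction already gives $\mathcal{A}(c(\theta),\theta)=c(\theta)^{n-1}>0$; at conjugate cut points the jump is zero. With these repairs, your rigidity route (equality in the Laplacian comparison plus absence of cut points in $B_x(r_0)$, so $\exp_x$ is an isometry) is a cleaner alternative to the paper's volume computation followed by Bishop--Gromov rigidity.
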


\begin{remark}\label{rem main novel pt in Main-1}
This result generalizes the classical Schoen--Yau inequality by removing the injectivity radius constraint. Our approach considers the pullback of a superharmonic function via the exponential map and establishes a generalized divergence theorem (Lemma \ref{lem generalized divergence theorem for geodesic ball}) for geodesic balls. This lemma, which handles the pullback function on the tangent space, is of independent interest and may have other applications in the future study.

Another new tool is the quantity \(F(t)\) introduced in Lemma \ref{lem decreasing of average integral}, which coincides with the one studied by Schoen and Yau when the radius is less than the injectivity radius. The key observation is that \(F(t)\) is non-increasing for all \(t>0\). Instead of showing the upper derivative of $F$ is non-positive; we directly estimate the difference \(F(t)-F(t-s)\). The argument use a nested interval argument and the generalized divergence theorem to derive a contradiction, if \(F\) were to increase. The corresponding rigidity theorem, which asserts that equality in the mean value inequality forces the geodesic ball to be Euclidean, is a geometric consequence that underscores the sharpness of the constant.
\end{remark}

Parallel to the above monotonicity argument, another theme in geometric analysis is understanding the asymptotic structure of complete non-compact manifolds with non-negative Ricci curvature using monotonicity. Colding \cite{Colding} and Colding-Minicozzi \cite{CM, CM-AJM} developed powerful monotonicity formulas for harmonic functions, which are crucial for analyzing the Green's function and the asymptotic volume ratio $\mathrm{V}_M$. These tools link the coarse geometric invariant like $\mathrm{V}_M$ to the integral of scalar curvature.

The study of integral of the curvature started from the well-known Gauss-Bonnet Theorem: for any compact $2$-dim Riemannian manifold $(M^2, g)$, 
\begin{align}
\int_{M^2} K d\mu= 2\pi\chi(M^2), \nonumber 
\end{align}
where $K$ is the Gaussian curvature of $M^2$, $d\mu$ is the element of area of $M^2$, $\chi(M^2)$ is the Euler characteristic of $M^2$. Cohn-Vossen \cite{CV} studied the integral of the curvature on complete $2$-dim Riemannian manifold, obtained the so-called Cohn-Vossen's inequality: If $(M^2, g)$ is a finitely connected, complete, oriented Riemannian manifold, and assume $\int_{M^2} Kd\mu$ exists as extended real number, then $\int_{M^2} Kd\mu\leq 2\pi\chi(M^2)$. Also see \cite{Huber} and \cite{Hartman} for related works on $2$-dim Riemannian manifolds.

Motivated to get a generalization of the Cohn-Vossen's inequality, Yau \cite[Problem $9$]{Yau} posed the following question: 
\begin{question}[Yau]\label{ques Yau-1}
{For any complete Riemannian manifold $(M^n, g)$ with $Rc\geq 0$, any $p\in M^n$, is it true that $\lim\limits_{r\rightarrow \infty} r^{2- n}\int_{B_r(p)} R< \infty$ ?
}
\end{question}

There are several progress on this question, see \cite{Xu-IOC1}, \cite{Zhu}, \cite{Zhu-1}, \cite{Xu} and \cite{MW}. We present the following example, which explains the completeness of the whole Riemannian manifold is crucial to Yau's conjecture.

\begin{example}
{We consider a sequence of 2-dimensional Riemannian manifolds $(M^2, g_i)$ with points $p_i$ such that the geodesic balls $B_{p_i}(1)$ have sectional curvature $K \geq 0$, and
\[
\lim_{i \to \infty} \int_{B_{p_i}(1)} R  dV = \infty,
\]
where $R = 2K$ is the scalar curvature.

Using geodesic polar coordinates centered at $p_i$, the metric takes the form:
\[
ds^2 = dr^2 + f_i(r)^2 d\theta^2, \quad \quad  f_i(r)=r-\int_0^r\frac{t^2}{\sqrt{1-t^2+1/i^2}}\mathrm{d}t.
\]
with $f_i(0) = 0$, $f_i'(0) = 1$, and all even-order derivatives vanishing at $r = 0$ to ensure smoothness.

Direct computation yields:
\[
\int_{B_{p_i}(1)} R  dV = 2 \int_0^{2\pi} \int_0^1 K f_i(r)  dr  d\theta = 4\pi \int_0^1 (-f_i''(r))  dr= 4\pi (1 - f_i'(1))\rightarrow \infty.
\]
}
\end{example}

Our second set of results concerns the asymptotic integral geometry of three-dimensional manifolds with non-negative Ricci curvature and maximal volume growth ($\mathrm{V}_M > 0$). By analyzing the asymptotic properties of the function $b(x)$ and its level sets, we establish an explicit, scale-invariant integral formula in Theorem \ref{thm main-2} for a weighted scalar curvature. 

\begin{theorem}\label{thm main-2}
{For a complete non-compact Riemannian manifold $M^3$ with $Rc\geq 0$ and maximum volume growth, there is
\begin{align}
\lim_{r\rightarrow \infty}\frac{\int_{b\leq r}R\cdot |\nabla b|}{r}=8\pi \big[1- \mathrm{V}_{M}\big], \label{integral fomula in main-2},
\end{align}
where $R$ is the scalar curvature.
}
\end{theorem}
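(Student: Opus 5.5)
This is the sketch I would follow for Theorem \ref{thm main-2}. Take $n=3$ and $b=G^{-1}$ with $G$ the minimal positive Green function normalized so that $\Delta G=-\delta_q$ (up to the constant giving $b\sim r$ in $\mathbb{R}^3$). The classical identity is
\[
\Delta b^2 = 6|\nabla b|^2 ,
\]
and Bochner's formula applied to $b^2$ couples $\mathrm{Rc}(\nabla b,\nabla b)$ to $|\nabla b|$ and $|\mathrm{Hess}\, b^2|$. On each regular level set $\Sigma_t=\{b=t\}$ I would use the Gauss equation
\[
R = 2\,\mathrm{Rc}(\nu,\nu) + R_\Sigma + |A|^2 - H^2 , \qquad \nu=\frac{\nabla b}{|\nabla b|},
\]
where $R_\Sigma=2K_\Sigma$. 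By the coarea formula,
\[
\int_{b\le r} R|\nabla b| = \int_0^r\!\int_{\Sigma_t} R\, dt .
\]
Gauss--Bonnet gives $\int_{\Sigma_t}K_\Sigma = 2\pi\chi(\Sigma_t)$. For a.e.\ $t$ one has $\chi(\Sigma_t)=2$ once $t$ is large: $\mathrm{Rc}\ge0$ and maximal volume growth make $M$ diffeomorphic to $\mathbb{R}^3$ (and one end), and $\Sigma_t$ is a sphere at generic large levels, or at least $\chi\le 2$ with equality asymptotically. So the $R_\Sigma$ term contributes $8\pi r + o(r)$, and the remaining task is to show
\[
\int_0^r\!\int_{\Sigma_t}\bigl(2\mathrm{Rc}(\nu,\nu)+|A|^2-H^2\bigr) = -8\pi \mathrm{V}_M\, r + o(r).
\]

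For this I would express $\mathrm{Rc}(\nu,\nu)$ through the derivative of the mean curvature along the flow. With $\Delta b = 2|\nabla b|^2/b$, the mean curvature of $\Sigma_t$ is $H = \bigl(\Delta b - \mathrm{Hess}\, b(\nu,\nu)\bigr)/|\nabla b|$. The Riccati/Bochner identity, integrated over $\Sigma_t$, writes $\int_{\Sigma_t}\mathrm{Rc}(\nu,\nu)$ as a $t$-derivative of $\int_{\Sigma_t} H|\nabla b|$-type quantities (plus $-|A|^2$ and $H^2$ terms) and a divergence term that vanishes on a closed surface. The $|A|^2$ and $H^2$ pieces should cancel exactly. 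This is the standard computation in the monotonicity formulas of Colding and Colding--Minicozzi. After integrating in $t$ from $0$ to $r$, everything reduces to boundary terms at $t=r$ built from $\int_{\Sigma_r}|\nabla b|^2$, $\int_{\Sigma_r}H|\nabla b|$, and so on. The contribution near $t=0$ is handled by the Euclidean asymptotics of $b$ at $q$.

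The asymptotic input is Colding--Minicozzi: with maximal volume growth, $|\nabla b|\to \mathrm{V}_M^{1/3}$ in the scale-invariant $L^2$ sense on $\{b\sim r\}$, and $\frac{1}{r^2}\int_{\Sigma_r}|\nabla b|^{k}$ converges to the corresponding constant times $4\pi$ (the normalization of $b$ fixes $\int_{\Sigma_t}|\nabla b|=4\pi t^2$). Together with the Hessian decay $\int r^{-1}|\mathrm{Hess}\, b^2 - 2|\nabla b|^2 g|^2\to 0$ from the monotonicity formula, the boundary terms should converge after dividing by $r$ to $-8\pi\mathrm{V}_M$. This yields $8\pi(1-\mathrm{V}_M)$. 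A sanity check: on $\mathbb{R}^3$, $\mathrm{V}_M=1$ and $R=0$, and on a cone the defect matches.

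The main obstacle is passing to the limit in the boundary terms that involve $H$, i.e.\ second derivatives of $b$ on a single level set. Colding--Minicozzi only give integrated (in $t$) Hessian control. I would therefore first average over $t\in[r,2r]$ and use the coarea formula to convert level-set integrals into annulus integrals. I would then use the fact that $\int_{b\le r}R|\nabla b|/r$ is itself a ratio of a nondecreasing integral, so Cesàro convergence upgrades to genuine convergence by a monotonicity/Tauberian argument. Handling the critical points of $b$ (Sard plus $\chi\le 2$) is a secondary technical issue.
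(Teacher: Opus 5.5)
There is a genuine gap at the Gauss--Bonnet step, which is exactly where the paper does its new work. You assert that $\Sigma_t$ is a sphere for a.e.\ large $t$, ``or at least $\chi\le 2$ with equality asymptotically.'' Knowing $M\cong\mathbb{R}^3$ does not give this once $b$ can have critical points at arbitrarily large levels. The paper neither proves nor uses that generic large levels are spheres. It only shows that the positive-genus levels $\mathcal{T}\cup\mathcal{B}$ have zero asymptotic density, and their measure in $(0,r]$ may still be unbounded. What you actually need is $\frac1r\int_0^r dt\int_{\Sigma_t}R_{\Sigma_t}\to 8\pi$, and your sketch is missing two ingredients for it.

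First, the upper bound needs $\Sigma_t$ to be connected, since two spheres already give $\chi=4$. The paper proves connectedness in Lemma~\ref{lem 3-dim level set is connected}, using $M\cong\mathbb{R}^3$, the maximum principle and unique continuation for $G$. Second, and more seriously, the lower bound: the genus of bad levels is not bounded a priori. So you must show not only that bad levels have density zero, but also that $\int_{\mathcal{T}_r\cup\mathcal{B}_r}dt\int_{\Sigma_t}R_{\Sigma_t}=o(r)$.

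The missing idea is Lemma~\ref{lem measure and integral ineq}. By the Gauss equation and $R\ge 0$, for any set $A$ of regular values, $\int_{A_r}dt\int_{\Sigma_t}R_{\Sigma_t}\ge\frac12\int_{b^{-1}(A_r)}H^2|\nabla b|-o(r)$. Now write $H=2|\nabla b|/b-\nabla^2 b(\vec n,\vec n)/|\nabla b|$ and discard the Hessian part via Proposition~\ref{prop integral along b direction-Ricci}. Since $\mathcal{A}(t)=t^{-2}\int_{\Sigma_t}|\nabla b|^2$ is non-increasing with limit $4\pi\mathrm{V}_M$, this gives $\int_{A_r}dt\int_{\Sigma_t}R_{\Sigma_t}\ge 8\pi\mathrm{V}_M\,\mathcal{L}^1(A_r)-o(r)$. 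Take $A=\mathcal{T}\cup\mathcal{B}$, where Gauss--Bonnet gives $\int_{\Sigma_t}R_{\Sigma_t}\le 0$. This squeezes both $\mathcal{L}^1(A_r)/r$ and the bad-level contribution to zero (Lemma~\ref{lem bad values has density 0}), and it is where $\mathrm{V}_M>0$ is used essentially.

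Separately, your treatment of the non-topological terms is only heuristic: ``the $|A|^2$ and $H^2$ pieces should cancel exactly,'' followed by a reduction to single-level boundary terms and a Tauberian upgrade. The obstacle you identify does not arise in the paper's route. The non-decreasing quantity $\mathcal{A}'(t)/t^{n-3}$ from Lemma~\ref{lem derivative of Vp} gives $\int_{b\le r}(|\Pi_0|^2+\mathrm{Rc}(\vec n,\vec n))|\nabla b|=o(r)$. There the only boundary term, $r\mathcal{A}'(r)\le 0$, is dropped by its sign. Then $\int_{b\le r}H^2|\nabla b|=16\pi\mathrm{V}_M r+o(r)$ follows by integrating by parts in $t$ against $\mathcal{A}'(t)=t^{-2}\int_{\Sigma_t}\nabla^2 b(\vec n,\vec n)$, so no pointwise-in-$t$ control of $H$ is needed. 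Also, if you did pursue your Tauberian step, averaging over $[r,2r]$ alone would not suffice for a nondecreasing integral. You would need the averages over $[r,(1+\delta)r]$ for every $\delta>0$.
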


\begin{remark}
{In \cite{Xu-IOC1}, it is proved: for a complete non-compact Riemannian manifold $M^3$, which is non-parabolic with $Rc\geq 0$, we have 
\begin{align}
\varlimsup_{r\rightarrow \infty}\frac{\int_{b\leq r}R\cdot |\nabla b|}{r}\leq 8\pi \big[1- \mathrm{V}_{M}\big]. \label{ineq for non-parabolic}
\end{align}
Theorem \ref{thm main-2} can be viewed as a refinement of (\ref{ineq for non-parabolic}) in the maximal volume growth case.

The identity (\ref{integral fomula in main-2}) provides a precise quantitative measurement of how the geometry deviates from the Euclidean standard and links the asymptotic scalar curvature distribution directly to the asymptotic volume ratio.
}
\end{remark}

It is illuminating to explain why (\ref{integral fomula in main-2}) holds from an intuitive view. Consider a complete non-compact three-manifold $(M^3,g)$ with $\operatorname{Ric} \ge 0$ and maximal volume growth ($\mathrm{V}_M>0$). By the theory of Cheeger--Colding \cite{CC-Ann}, after rescaling the metric by $r^{-2}$, the pointed sequence $(M, r^{-2}g, q)$ converges in the pointed Gromov--Hausdorff sense to a metric cone $C(X)$, where $X$ is a compact metric space (possibly with singularities). 

If the convergence is smooth outside the vertex, then $X$ is a smooth surface with sectional curvature $K_X\ge 1$ and the cone metric satisfies
\begin{align}
g_{C(X)} = dr^{2} + r^{2} g_{X}, \quad  R_{C(X)} = \frac{2(K_X-1)}{r^2}, \qquad r>0. \nonumber 
\end{align}

Note we have 
\begin{align}
\mathrm{V}_M= \mathrm{V}_{C(X)}= \lim_{t\to\infty} \frac{\operatorname{Vol}(B(t))}{\omega_{3} t^{3}}= \lim_{t\to\infty} \frac{\int_{0}^{t}dr \int_{X} r^{2}}{\omega_{3} t^{3}}= \frac{\mathrm{Vol}(X)}{4\pi}, \label{V(X) and VM} 
\end{align}
where $B(t)$ is the ball in $C(X)$

Using Gauss--Bonnet Theorem and (\ref{V(X) and VM}), we get 
\begin{align}
\int_{B(1)} R \, dV 
   &= \int_{0}^{1}dr \int_{X} \frac{2(K_{X}-1)}{r^{2}} \, r^{2} 
   = 2\left( [\int_{X} K_{X} ] - \mathrm{Vol}(X) \right)\nonumber \\
   &=8\pi\,(1-\mathrm{V}_{M}). \label{cone curv formula}
\end{align}

From \cite{CC-Ann},  for $B_q(r)\subseteq (M^3, g)$ with $Rc\geq 0$ and maximal volume growth, we know that $(B_q(1), r^{-2}g)$ converges to $(B(1), g_{C(X)})$ in Gromov-Hausdorff sense (although $X$ possibly is not unique, but $V_{C(X)}= V_{M^3}$). 

Note we have 
\begin{align}
\lim_{r\rightarrow\infty}\frac{\int_{B_q(r)}R(g)dg}{r}= \lim_{r\rightarrow\infty}\int_{(B_q(1), r^{-2}g)}R(r^{-2}g)d(r^{-2}g). \nonumber 
\end{align}

Although it is not rigorous, from the Gromov-Hausdorff convergence of $(B_q(1), r^{-2}g)$ to $(B(1), g_{C(X)})$,  it is not too wild to expect that the following holds:
\begin{align}
\lim_{r\rightarrow\infty}\int_{(B_q(1), r^{-2}g)}R(r^{-2}g)d(r^{-2}g)= \int_{(B(1), g_{C(X)})}R(C(X))d(g_{C(X)}) . \label{key convergence}
\end{align}

If (\ref{key convergence}) holds, from (\ref{cone curv formula}) we will have 
\begin{align}
\lim_{r\rightarrow\infty}\frac{\int_{B_q(r)}R(g)dg}{r}= 8\pi\,(1-\mathrm{V}_{M^3}). \nonumber  
\end{align}

Based on the above discussion, we make the following conjecture.
\begin{conj}\label{conj integral curv formula}
{For a complete non-compact Riemannian manifold $M^3$ with $Rc\geq 0$ and maximum volume growth, there is
\begin{align}
\lim_{r\rightarrow \infty}\frac{\int_{B_q(r)}R}{r}=8\pi \big[1- \mathrm{V}_{M}\big]. \label{integr curv equa for maximal case}
\end{align}
}
\end{conj}

\begin{remark}\label{rem conj for pole case}
{The conjecture \ref{conj integral curv formula} was proved for manifolds with pole in \cite{Xu}. And Theorem \ref{thm main-2} can be viewed as a sibling version of Conjecture \ref{conj integral curv formula}
}
\end{remark}

Hamilton has proposed the following conjecture:
\begin{conj}\label{conj Hamilton}
{If $(M^3, g)$ is a complete, non-compact Riemannian manifold and Ricci pinched, then $(M^3, g)$ is flat.
}
\end{conj}
The conjecture has been verified with additional assumption by \cite{CZ} and \cite{Lott}. For recent progress on Conjecture \ref{conj Hamilton}, see \cite{DSS} and \cite{LT} for the proof using Ricci flow (also see \cite{HK} for argument by the inverse mean curvature flow).

As an application of Theorem \ref{thm main-2}, we provide an alternative proof of a special case of Hamilton's pinching conjecture in dimension three.

\begin{cor}\label{cor harmilton}
{Assume $(M^3, g)$ is a complete Riemannian manifold with $\operatorname{Ric} \ge \epsilon\cdot R\cdot g \ge 0$ for some fixed $\epsilon > 0$, and is maximum volume growth. Then $M^3$ is isometric to $\mathbb{R}^3$.
}
\end{cor}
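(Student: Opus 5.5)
Our plan is to combine Theorem \ref{thm main-2} with the pinching hypothesis to force $\mathrm{V}_M=1$, and then to apply the equality case of Bishop--Gromov. Since $M^3$ has maximal volume growth, it is non-compact and non-parabolic, so the minimal Green function $G$ and the weight $b=G^{-1}$ exist. Theorem \ref{thm main-2} then applies:
\[
\lim_{r\to\infty}\frac{1}{r}\int_{b\le r}R\,|\nabla b| = 8\pi\,[1-\mathrm{V}_M].
\]
It therefore suffices to prove that the left-hand side equals zero. That gives $\mathrm{V}_M=1$. Bishop--Gromov volume comparison with $Rc\ge0$ then makes $\operatorname{Vol}B_q(r)/(\omega_3 r^3)$ non-increasing in $r$, with limit $1$ as $r\to0$ and as $r\to\infty$. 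Hence it is identically $1$, and the rigidity case gives $M^3\cong\mathbb{R}^3$.

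To prove the limit is zero, we first reduce to the case $R>0$ somewhere; if $R\equiv0$, then $Rc\ge \epsilon R g$ forces $Rc\equiv0$, which in dimension three means $M$ is flat. The pinching $Rc\ge\epsilon R g$ bounds all sectional curvatures below by $-C(\epsilon)R$, in the sense that all eigenvalues of the curvature operator are comparable to $R$. This is the setting of the known decay results for pinched manifolds, for instance the Chen--Zhu/Lott-type estimates, but we prefer a route through the scaling argument. Blow down along $r_i\to\infty$: by Cheeger--Colding, $(M,r_i^{-2}g,q)$ converges to a cone $C(X)$. Pinched Ricci curvature together with the scale-invariant integral bound from Theorem \ref{thm main-2} should give uniform local curvature control on annuli. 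The cross section then satisfies $K_X-1\ge \epsilon'(K_X-1)\cdot(\dots)$; more precisely, on a cone the radial Ricci eigenvalue is $0$, so the pinching $Ric(\partial_r,\partial_r)=0\ge\epsilon R_{C(X)}\ge0$ forces $R_{C(X)}=0$, i.e.\ $K_X\equiv1$. The limit cone is then flat, and $\mathrm{Vol}(X)=4\pi$ once we show $X=S^2$, not a quotient, using the simple-connectivity at infinity that maximal volume growth gives in dimension $3$. This yields $\mathrm{V}_M=1$ directly. Alternatively, it makes the scaled integral of $R$ tend to $0$, consistent with the formula.

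The main obstacle is that Cheeger--Colding convergence is only Gromov--Hausdorff, so the pinching cannot be passed to the limit pointwise. Our first attempt avoids the cone altogether and works with the integral quantity. Use the Bochner formula for $b$ on level sets, as in the proof of Theorem \ref{thm main-2}, where $\int_{b\le r}Rc(\nabla b,\nabla b)|\nabla b|^{-1}$-type terms appear. From $Rc(\nabla b,\nabla b)\ge\epsilon R|\nabla b|^2$ we get
\[
\int_{b\le r} R|\nabla b|^{3}\le \epsilon^{-1}\int_{b\le r}Rc(\nabla b,\nabla b)|\nabla b| .
\]
The right-hand side is $o(r)$ by the Colding--Minicozzi monotonicity: the derivative of $\int_{b=r}|\nabla b|^3$ controls $\int Rc(\nabla b,\nabla b)$, and that quantity converges to its cone value. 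Finally, $|\nabla b|\to \mathrm{V}_M^{1/3}$ in an averaged sense at infinity, since $|\nabla b|$ tends to a constant on the cone. This lets us replace $|\nabla b|^3$ by $|\nabla b|$ up to $o(r)$ errors, giving that the left-hand side of Theorem \ref{thm main-2} vanishes, and the conclusion follows as above.
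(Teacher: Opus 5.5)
\paragraph{The cone route is not a proof.} Passing the pinching to a Gromov--Hausdorff limit, the ``uniform local curvature control on annuli'', and ruling out quotients of $S^2$ are all unjustified, as you acknowledge. So everything rests on your integral argument.

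\paragraph{The integral argument has a gap in its last step.} The first inequality is fine. Since $|\nabla b|\le 1$ (Lemma \ref{lem upper bound of gradient of b}) and $Rc\ge 0$, you have
\[
Rc(\nabla b,\nabla b)\,|\nabla b| = |\nabla b|^3 Rc(\vec{n},\vec{n}) \le |\nabla b|\,Rc(\vec{n},\vec{n}),
\]
so Proposition \ref{prop integral along b direction-Ricci} with $n=3$ gives $\int_{b\le r}R|\nabla b|^3=o(r)$.

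The step that fails is replacing $|\nabla b|^3$ by $|\nabla b|$ ``up to $o(r)$ errors''. The averaged convergence $|\nabla b|\to \mathrm{V}_M^{1/3}$ from Corollary \ref{cor limit of the rescaled volume by b} holds with respect to $r^{-3}d\mu$. Your replacement needs it with respect to the measure $r^{-1}R\,d\mu$. Without a pointwise decay such as $R\le C\rho^{-2}$, which you do not have, the scalar curvature can concentrate exactly where $|\nabla b|$ is small.

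Because $|\nabla b|\le 1$, you have $R|\nabla b|^3\le R|\nabla b|$. Smallness of the cubic-weighted integral therefore gives no information about the integral in Theorem \ref{thm main-2}. Hölder's inequality,
\[
\int R|\nabla b|\le \Big(\int R|\nabla b|^3\Big)^{1/3}\Big(\int R\Big)^{2/3},
\]
would rescue this route, but only together with a linear bound $\int_{b\le r}R=O(r)$. That is a separate, nontrivial input you have not supplied.

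\paragraph{The fix is to drop the $|\nabla b|^3$ weight.} The pinching is a pointwise statement about unit vectors, so apply it to $\vec{n}$ with weight $|\nabla b|$: $\epsilon R|\nabla b|\le Rc(\vec{n},\vec{n})|\nabla b|$. Integrate over $\{b\le r\}$, divide by $r$, and use Proposition \ref{prop integral along b direction-Ricci} together with Theorem \ref{thm main-2}:
\begin{align*}
0=\lim_{r\to\infty}\frac{1}{r}\int_{b\le r}Rc(\vec{n},\vec{n})\,|\nabla b|\;\ge\;\epsilon\lim_{r\to\infty}\frac{1}{r}\int_{b\le r}R\,|\nabla b| = 8\pi\epsilon\,(1-\mathrm{V}_M).
\end{align*}
Hence $\mathrm{V}_M=1$. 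This is exactly the paper's two-line proof. Your remaining steps, namely non-parabolicity from maximal volume growth and the Bishop--Gromov rigidity from $\mathrm{V}_M=1$, are fine.
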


\begin{remark}\label{rem compare with Bonnet-Myers proof}
{The Bonnet-Myers Theorem is proved by integrating curvature along geodesic segments. In a similar spirit, Hamilton's pinching conjecture can be seen as a scaling-invariant analogue; the proof of Corollary \ref{cor harmilton} extends this idea by integrating curvature over a region of $M^3$.
}
\end{remark}

Now we explain some innovations of the proof of Theorem \ref{thm main-2}. Deriving the exact limit in (\ref{integral fomula in main-2}), requires knowledge of the topological type of the level sets of $b$, as this allows us to apply the Gauss--Bonnet theorem. In this paper, we study the set (denoted as $\mathcal{T}\cup \mathcal{B}$) of parameters $t \in \mathbb{R}$ for which the level set $b^{-1}(t)$ has genus at least one. Firstly, under non-parabolic assumption, we show 
\begin{align}
\lim_{r\rightarrow\infty}\frac{\int_{[\mathcal{T}\cup \mathcal{B}] \cap (0, r)}dt\int_{b^{-1}(t)}R(b^{-1}(t))}{r}= 0. \nonumber 
\end{align}
Furthermore, under maximal volume growth assumption, we prove that the \emph{asymptotic density} of this set vanishes: that is
\begin{align}
\frac{L^1\bigl([\mathcal{T}\cup \mathcal{B}] \cap (0, t)\bigr)}{t}, 
\end{align}
tends to $0$ as $t \to \infty$, even though $L^1\bigl([\mathcal{T}\cup \mathcal{B}] \cap (0, t)\bigr)$ itself possibly grow (for instance, like $\sqrt{t}$). In other words, under the condition of maximal volume growth, we show that asymptotically almost every level set is topologically a sphere in the above sense. This is a result with independent interest. 

A detailed asymptotic analysis of the integral of the mean curvature of such level sets is essential for establishing the above asymptotic result. For technical reasons, we therefore begin with the weighted volume of the level sets and establish the precise asymptotic results required in the proof. Although portions of these asymptotic results were first obtained by Colding--Minicozzi (\cite{CM}) and one of the authors (\cite{Xu-IOC1}), the argument presented here is self-contained and emphasizes the second derivative of the weighted volume. This focus not only completes the derivation in a unified way but is also of independent interest, paralleling similar techniques used in monotonicity formulas for minimal surfaces and harmonic maps.

We expect that (\ref{integral fomula in main-2}) should in fact hold in the broader \emph{non-parabolic} setting, although the case of non-parabolic manifolds without maximal volume growth remains open. Note there are example manifold which is non-parabolic without maximal volume growth.

The application of the sharp integral curvature formula to Hamilton's conjecture, was established by one of the authors (\cite{Xu}) under the pole assumption. This approach is also related to the method of Huisken and Koeber using inverse mean curvature flow. 

This paper is structured as follows. In Section $2$, we establish foundational mean value formulas using the Green's function. The generalized divergence theorem for geodesic balls is proved in Section $3$. Section $4$ is devoted to the proof of the sharp mean value inequality and its rigidity. Sections $5$ and $6$ develop the asymptotic analysis of the function $b$ and its level sets, culminating in the proof of Theorem \ref{thm main-2} in Section $6$. 

In the rest of paper, unless otherwise mentioned, we always assume $(M^n, g)$ is a non-parabolic manifold  with $n\geq 3$ and $Rc\geq 0$.

\section*{Part I. Integral for harmonic functions}

\section{Mean value formula}

On non-parabolic manifold $M^n$ with $n\geq 3$, there exists a unique minimal positive Green function, denoted as $G(q, x)$, where $q$ is a fixed point on the manifold. To emphasize the base point, we use the notation that
\begin{align*}
    b_q(x)= G(q, x)^{\frac{1}{2- n}}.
\end{align*}

From $\Delta (b_q^{2- n})= 0$, we have 
\begin{equation}
\Delta b_q = (n-1)\frac{|\nabla b_q|^2}{b_q} .\label{equation of Laplace of v}
\end{equation}

From the behavior of Green's function $G(q, x)$ near the singular point $q$, we get 
\begin{align}
\lim_{r\to 0} \Big\{\sup_{x\in B_q(r)} \left| \frac{b_q}{\rho}(x) - 1 \right| + \sup_{x\in B_q(r) }\left| |\nabla b_q(x)| - 1 \right| = 0\Big\},\label{est of gradient of b at 0}
\end{align}
where $\rho(x) = d(x,q)$.

The following result computes the value of scaling invariant weighted volume around the singularity.
\begin{lemma}\label{lem volume near singularity}
{Let $\omega_n$ be the volume of unit ball in $\mathbb{R}^n$, we have
\begin{align*}
\lim_{{r\to 0}} r^{-n} \int_{b_q\le r} |\nabla b_q|^2 = \omega_n.
\end{align*}
}
\end{lemma}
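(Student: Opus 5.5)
The lemma follows almost directly from the asymptotics (\ref{est of gradient of b at 0}): near $q$ the function $b_q$ agrees with the distance $\rho$ to first order, and $|\nabla b_q|$ is close to $1$. So I will compare the set $\{b_q\le r\}$ with geodesic balls and the integrand $|\nabla b_q|^2$ with $1$, and then use that small geodesic balls have Euclidean volume asymptotically.

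\textbf{Step 1 (compare sublevel sets with balls).} Fix $\epsilon\in(0,1/2)$. By (\ref{est of gradient of b at 0}) there is $r_0>0$ such that for all $x\in B_q(r_0)\setminus\{q\}$,
\[
(1-\epsilon)\rho(x)\le b_q(x)\le (1+\epsilon)\rho(x), \qquad 1-\epsilon\le |\nabla b_q(x)|\le 1+\epsilon .
\]
One must first check that $\{b_q\le r\}$ actually lies inside $B_q(r_0)$ for small $r$. Since $G(q,\cdot)$ is positive and continuous on the compact set $\partial B_q(r_0)$, it is bounded above there. By the maximum principle for the minimal Green function (it is a limit of Dirichlet Green functions on an exhaustion), $G(q,\cdot)\le \sup_{\partial B_q(r_0)}G$ outside $B_q(r_0)$. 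Hence $b_q\ge c_0>0$ on $M^n\setminus B_q(r_0)$.

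Consequently, for $r<\min\{c_0,(1-\epsilon)r_0\}$,
\[
B_q\big(r/(1+\epsilon)\big)\subseteq \{b_q\le r\}\subseteq B_q\big(r/(1-\epsilon)\big).
\]

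\textbf{Step 2 (bound the integral).} On the set $\{b_q\le r\}$ the integrand satisfies $(1-\epsilon)^2\le|\nabla b_q|^2\le(1+\epsilon)^2$. Combining this with the inclusions of Step 1 gives
\[
(1-\epsilon)^2\,\mathrm{Vol}\big(B_q(r/(1+\epsilon))\big)\le \int_{b_q\le r}|\nabla b_q|^2\le (1+\epsilon)^2\,\mathrm{Vol}\big(B_q(r/(1-\epsilon))\big).
\]

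\textbf{Step 3 (pass to the limit).} By smoothness of the metric at $q$, $\lim_{s\to0}s^{-n}\mathrm{Vol}(B_q(s))=\omega_n$. Multiplying the bounds of Step 2 by $r^{-n}$ and letting $r\to0$ gives
\[
\frac{(1-\epsilon)^2}{(1+\epsilon)^n}\,\omega_n\le\liminf_{r\to0} r^{-n}\int_{b_q\le r}|\nabla b_q|^2\le\limsup_{r\to0} r^{-n}\int_{b_q\le r}|\nabla b_q|^2\le \frac{(1+\epsilon)^2}{(1-\epsilon)^n}\,\omega_n .
\]
Since $\epsilon$ is arbitrary, the limit equals $\omega_n$.

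The only step that needs care is the localization in Step 1, namely that $b_q$ stays bounded below away from $q$, so that $\{b_q\le r\}$ shrinks to $q$. Everything else is routine.
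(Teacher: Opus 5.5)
Your proof is correct and is essentially the paper's argument: you sandwich $\{b_q\le r\}$ between geodesic balls of radii $r/(1\pm\epsilon)$ using (\ref{est of gradient of b at 0}), bound $|\nabla b_q|^2$ between $(1\pm\epsilon)^2$, and let $r\to 0$ and then $\epsilon\to 0$. The only difference is the localization step. You derive the bound $b_q\ge c_0>0$ off $B_q(r_0)$ explicitly from the maximum principle for the minimal Green function. The paper instead uses a short contradiction argument that appeals to the boundedness of $G(q,\cdot)$ on $M^n\setminus B_q(\delta)$, which your maximum-principle argument in effect justifies.
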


\pf
{From (\ref{est of gradient of b at 0}), for any \( \epsilon \in (0, 1) \), there exists \( \delta > 0 \), such that
\begin{align}
\left||\nabla b_q(x)| - 1\right| < \epsilon, \quad |b_q - \rho|(x) < \epsilon\rho(x), \quad \forall x \in B_q(\delta). \label{inq between rho and b}
\end{align}

For $\delta>0$, we claim that there exists $r_0$, such that if $b_q(x)\le r_0$, then $\rho(x)\le \delta$. Otherwise, there exists sequence $\{q_i\}_{i=1}^\infty$ on $M$, such that 
\begin{align}
\lim_{i\rightarrow\infty}b_q(q_i)= 0, \quad \quad \rho(q_i)> \delta. \nonumber 
\end{align}
This implies $\displaystyle \lim_{i\to \infty}G(q,q_i)\to \infty$, on the other hand $q_i\notin B_q(\delta)$; which contradicts the boundedness of Green function $G(q, \cdot)$ on $M^n- B_q(\delta)$. 

For any \( r < r_0 \), from (\ref{inq between rho and b}) we get
\begin{align}
\{x: \rho(x)\leq\frac{r}{1+ \epsilon}\}\subseteq \{x: b_q(x)\leq r\}\subseteq \{x: \rho(x)\leq \frac{r}{1- \epsilon}\}. \nonumber 
\end{align} 

Therefore we have
\begin{align*}
r^{-n} \int_{\rho\leq \frac{r}{1+\epsilon}} (1-\epsilon)^2 \leq r^{-n} \int_{b_q \leq r} |\nabla b_q|^2 \leq r^{-n} \int_{\rho \leq \frac{r}{1- \epsilon}} (1+\epsilon)^2.
\end{align*}

Hence let $r\to0$, 
\begin{align*}
    \frac{\omega_n(1-\epsilon)^2}{(1+\epsilon)^n}\le \lim_{{r\to 0}} r^{-n} \int_{b_q\le r} |\nabla b_q|^2 \le \frac{\omega_n}{(1- \epsilon)^n}(1+\epsilon)^2.
    \end{align*}
    
Letting $\epsilon\to 0$ in the above, the conclusion follows.
}
\qed

In \cite[Lemma 5.1]{Xu-Growth}, a mean value formula for harmonic functions is established. The following lemma extends this result, characterizing both harmonic and subharmonic functions via their mean value properties.

\begin{lemma}\label{lem mean value formula of harmonic func}
    Let $u\in C^\infty(M^n)$. For any $q\in M^n$ and $r>0$, let 
    \[
        I(u,q,r) = \frac{1}{n\omega_n r^{n-1}}\int_{b_q=r} u|\nabla b_q|.
    \]
    Then the following hold:
    \begin{enumerate}
        \item[(i)] $\Delta u = 0$ if and only if $u(q) = I(u,q,r)$ for all $q, r$.
        \item[(ii)] $\Delta u \ge 0$ if and only if $u(q) \le I(u,q,r)$ for all $q, r$.
    \end{enumerate}
\end{lemma}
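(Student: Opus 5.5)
The plan is to differentiate $I(u,q,r)$ in $r$, using the coarea formula, the equation (\ref{equation of Laplace of v}) for $b_q$, and Lemma \ref{lem volume near singularity} for the limit as $r\to 0$.

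\textbf{Step 1: the limit at $r=0$.} First I will show $\lim_{r\to0} I(u,q,r)=u(q)$. By the coarea formula,
\[
\int_{b_q\le r}|\nabla b_q|^2\,\phi=\int_0^r\Big(\int_{b_q=t}\phi\,|\nabla b_q|\Big)\,dt .
\]
Take $\phi=1$ and differentiate (or argue via the monotonicity from Step 2 applied to $u\equiv1$). Together with Lemma \ref{lem volume near singularity}, this gives $\int_{b_q=t}|\nabla b_q|=n\omega_n t^{n-1}$ for every $t$.

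In fact this identity can be seen directly. By (\ref{equation of Laplace of v}), $\operatorname{div}(b_q^{1-n}\nabla b_q)=0$. So the flux $\int_{b_q=t} b_q^{1-n}|\nabla b_q| = t^{1-n}\int_{b_q=t}|\nabla b_q|$ is independent of $t$. Its value, $n\omega_n$, is fixed by the asymptotics (\ref{est of gradient of b at 0}) near $q$, or equivalently by Lemma \ref{lem volume near singularity}.

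With this identity, $|I(u,q,r)-u(q)|\le \sup_{b_q=r}|u-u(q)|$. Since $\{b_q\le r\}\subseteq B_q(r/(1-\epsilon))$ for small $r$ (as in the proof of Lemma \ref{lem volume near singularity}), continuity of $u$ gives the limit.

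\textbf{Step 2: the derivative formula.} Write
\[
I(u,q,r)=\frac{1}{n\omega_n}\int_{b_q=r} u\, b_q^{1-n}|\nabla b_q| = \frac{1}{n\omega_n}\int_{b_q=r} u\,\langle b_q^{1-n}\nabla b_q,\nu\rangle .
\]
For $0<s<r$, apply the divergence theorem on $\{s\le b_q\le r\}$ to the field $u\,b_q^{1-n}\nabla b_q$. Using $\operatorname{div}(b_q^{1-n}\nabla b_q)=0$, this gives
\[
I(r)-I(s)=\frac{1}{n\omega_n}\int_{s\le b_q\le r} b_q^{1-n}\langle\nabla u,\nabla b_q\rangle .
\]
Then I will rewrite the right side using the identity $b_q^{1-n}\nabla b_q=\frac{1}{2-n}\nabla(b_q^{2-n})$. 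Integrate by parts once more, with $\psi(b_q)=\frac{b_q^{2-n}-r^{2-n}}{2-n}$, which vanishes on $\{b_q=r\}$. After letting $s\to0$ and using Step 1, this yields the representation
\[
I(u,q,r)-u(q)=\frac{1}{n\omega_n}\int_{b_q\le r}\frac{b_q^{2-n}-r^{2-n}}{n-2}\,\Delta u .
\]
Here the boundary term at $b_q=s$ goes to zero: it is bounded by $|\psi(s)|\int_{b_q=s}|\nabla u|\,|\nabla b_q| = O(s^{2-n}s^{n-1})\to0$, using the flux identity. Also $b_q^{2-n}\in L^1_{loc}$, so the integral converges. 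The kernel $\frac{b_q^{2-n}-r^{2-n}}{n-2}$ is positive on $\{b_q<r\}$.

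\textbf{Step 3: conclusions.} The forward implications in (i) and (ii) follow immediately from the representation. For the converses, suppose $u(q)\le I(u,q,r)$ for all $q,r$ but $\Delta u(q_0)<0$. Then $\Delta u<0$ on a neighborhood of $q_0$. For small $r$, $\{b_{q_0}\le r\}$ lies in that neighborhood, so the representation gives $I(u,q_0,r)<u(q_0)$, a contradiction. The same argument handles (i) by applying (ii) to both $u$ and $-u$.

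\textbf{Main obstacle.} The delicate point is the singular boundary term at $q$ and the justification of the divergence theorem on $\{s\le b_q\le r\}$ when $r$ is not a regular value of $b_q$. I would first prove everything for regular values $r$, which have full measure by Sard. Then I would extend to all $r$ by continuity of $I$ in $r$: the coarea formula shows the right side of the representation is continuous in $r$, and I would define or verify the left side as the corresponding limit.
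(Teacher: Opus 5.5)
Your proposal is correct. It overlaps with the paper's proof in substance, but it is organized differently, and the converse of (i) takes a genuinely different route.

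\textbf{Comparison with the paper.}
\begin{itemize}
\item The paper proves the differential identity
\[
\frac{d}{dr}\Big(r^{1-n}\int_{b_q=r}u|\nabla b_q|\Big)=r^{1-n}\int_{b_q\le r}\Delta u
\]
and then integrates it from $0$. Your representation with kernel $\frac{b_q^{2-n}-r^{2-n}}{n-2}$ is exactly what Fubini gives from that integration, so the two are equivalent.
\item Your derivation of the flux identity directly from $\operatorname{div}(b_q^{1-n}\nabla b_q)=0$ is the paper's case $u\equiv 1$.
\item The converse of (ii) is the same argument in both.
\item The converse of (i) is where you differ. You deduce it from (ii) applied to $u$ and $-u$, using linearity of $I$. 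The paper instead solves a Dirichlet problem on a small set $A(q,r_0)$ and runs a strong-maximum-principle argument on $v-u$. Your route is shorter and needs no elliptic existence theory. The representation formula also makes the sign of $I(u,q,r)-u(q)$ visible at a glance.
\end{itemize}

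\textbf{One loose end: critical values of $r$.} You propose to prove the identity at regular values and then extend to critical values ``by continuity of $I$ in $r$.'' That continuity is precisely what needs proof. The coarea formula only controls $r\mapsto\int_{b_q=r}u|\nabla b_q|$ for a.e.\ $r$. Defining $I$ at critical values as a limit would change the statement rather than prove it.

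The fix is what the paper does via the argument of Lemma~\ref{lem derivative of Vp}: apply the divergence theorem for sets of finite perimeter to $\{s<b_q<r\}$ directly, for every $r$.
\begin{itemize}
\item $\mathcal{H}^{n-1}(b_q^{-1}(r))<\infty$ gives finite perimeter.
\item Regular points of $b_q^{-1}(r)$ lie in the reduced boundary, with outer normal $\nabla b_q/|\nabla b_q|$.
\item On the critical set, the flux integrand $u\,b_q^{1-n}\langle\nabla b_q,\nu\rangle$ vanishes anyway.
\item Your second integration by parts is equally harmless at a critical level, since $\psi=0$ on $\{b_q=r\}$.
\end{itemize}
This makes the Sard-plus-continuity detour unnecessary.

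\textbf{A minor slip.} The boundary term at $b_q=s$ is bounded by $|\psi(s)|\int_{b_q=s}|\nabla u|$, without the extra factor $|\nabla b_q|$. This does not matter, since $|\nabla b_q|\to 1$ near $q$.
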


\pf
{\textbf{Step (1)}. Firstly, we note $\frac{\partial}{\partial r}= \frac{\nabla b_q}{|\nabla b_q|^2}$, assume the volume element of $b_q^{-1}(r)$ is $J(b_q^{-1}(r))$. Then as in the argument of Lemma \ref{lem derivative of Vp}, we can get $I(u,q,r)$ is differentiable in $r$ and 
\begin{align}
&\quad \quad \frac{d}{dr}\Big(r^{1- n}\int_{b_q= r}u|\nabla b_q|\Big) = (1- n)r^{-n}\int_{b_q= r}u|\nabla b_q|+ r^{1- n}\int_{b_q= r} \langle \nabla u, \frac{\nabla b_q}{|\nabla b_q|}\rangle \nonumber \\
&\quad \quad + r^{1- n}\int_{b_q= r} u\Big(\frac{\nabla_{\nabla b_q}|\nabla b_q|}{|\nabla b_q|^2}+ \frac{\nabla_{\nabla b_q}J(b_q^{-1}(r))}{|\nabla b_q| J(b_q^{-1}(r))}\Big)dx \nonumber \\
&= r^{1- n}\int_{b_q\leq r}\Delta u+ (1- n)r^{-n}\int_{b_q= r}u|\nabla b_q|+ r^{1- n}\int_{b_q= r} \frac{u}{|\nabla b_q|}\Delta b_q, \label{before simplification}
\end{align}
where we use the Divergence Theorem and the fact $\Big(\frac{\nabla_{\nabla b_q}|\nabla b_q|}{|\nabla b_q|}+ \frac{\nabla_{\nabla b_q}J(b_q^{-1}(r))}{J(b_q^{-1}(r))}\Big)= \Delta b_q$.

From (\ref{equation of Laplace of v}) and (\ref{before simplification}), we get
\begin{align}
\frac{d}{dr}\Big(r^{1- n}\int_{b_q= r}u|\nabla b_q|\Big) = r^{1- n}\int_{b_q\leq r}\Delta u. \label{deriv of integ}
\end{align}

Choose \( u = 1 \) in (\ref{deriv of integ}), by Lemma \ref{lem volume near singularity} we get 
\begin{align}
r^{1-n} \int_{b_q = r} |\nabla b_q|= n\omega_n. \label{slice weighted volume}
\end{align}

By coarea formula and (\ref{slice weighted volume}), we have
\begin{align}
\lim_{r\to 0} r^{1-n} \int_{b_q = r} |\nabla b_q | 
&= \lim_{r\to 0} n  r^{-n} \int_0^r \left( \int_{b_q = r} |\nabla b_q|  \right) dt
= \lim_{r\to 0} n  r^{-n} \int_{b_q \le  r} |\nabla b_q|^2\nonumber \\
&= n\omega_n. \label{volume slice near singularity}
\end{align}

\textbf{Step (2)}. From (\ref{volume slice near singularity}), for any $u$, we have
\begin{align}
\lim_{r\rightarrow 0}r^{1- n}\int_{b_q= r}u|\nabla b_q|= u(q)\lim_{r\rightarrow 0}r^{1- n}\int_{b_q= r}|\nabla b_q|= u(q)n\omega_n . \label{limit at 0}
\end{align}

Now if $\Delta u= 0$, from (\ref{deriv of integ}) and (\ref{limit at 0}) we get
\begin{align}
r^{1- n}\int_{b_q= r}u|\nabla b_q|= \lim_{r\rightarrow 0}r^{1- n}\int_{b_q= r}u|\nabla b_q|= u(q)n\omega_n. \nonumber 
\end{align}

If $\Delta u\geq 0$, from (\ref{deriv of integ}) and (\ref{limit at 0}) we get
\begin{align}
r^{1- n}\int_{b_q= r}u|\nabla b_q|\geq \lim_{r\rightarrow 0}r^{1- n}\int_{b_q= r}u|\nabla b_q|= u(q)n\omega_n. \nonumber 
\end{align}

\textbf{Step (3)}. If $u(q) = I(u,q,r)$ for all $q, r$, we show that $u$ is harmonic.

Denote $A(q,r)=\{x\in M:b_q(x)< r\}$. From (\ref{est of gradient of b at 0}), we can take $r_0$ sufficiently small so that $|\nabla b_q|(x)\neq 0$ for any $x\in A(q, r_0)$ and $A(q, r_0)$ lies in a local coordinate chart of $M$. 

From the standard elliptic theory, there exists a harmonic function $v\in C^\infty(\overline{A(q, r_0)})$ with $v=u$ on $\partial A(q, r_0)$. Then from Step (2), $v$ satisfies the mean value property
\begin{align*}
v(q)= \frac{1}{n\omega_nr^{n- 1}}\int_{b_q= r}v|\nabla b_q|, \quad \quad \forall A(q,r)\subset A(q, r_0). \nonumber
\end{align*}

Now we claim $v=u$. Let $h=v-u$, then $h$ also satisfies the mean value property
\begin{align*}
h(q)= \frac{1}{n\omega_nr^{n- 1}}\int_{b_q= r}h|\nabla b_q|, \quad \quad \forall A(q,r)\subset A(q, r_0). \nonumber
\end{align*}

If $h$ is not identically $0$, then $h$ have a nonzero maximum or minimum in $A(q, r_0)$. 

Suppose without loss of generality that $\displaystyle h(z)=\max_{A(q, r_0)}h>0$. Then for small $r>0$ such that $A(z,r)\subset A(q, r_0)$ and $|\nabla b_z|\neq 0$ on $A(z,r)$, we have
\begin{align*}
    0=h(z)-\frac{1}{n\omega_nr^{n- 1}}\int_{b_z= r}h|\nabla b_z|=\frac{1}{n\omega_nr^{n- 1}}\int_{b_z= r}(h(z)-h)|\nabla b_z|\geq 0.
\end{align*}
Hence $h\equiv h(z)$ on $\partial A(z,r)$. We then have $h\equiv h(z)$ on $A(z,r_1)$ for all small $r_1>0$. 

This implies $h$ is constant on the whole $A(q,r_0)$ by the connectedness of $A(q, r_0)$. 

The constant is $0$ since $h=0$ on $\partial A(q, r_0)$, which contradicts our hypothesis that $h(z)>0$. 

So $h$ is identically zero and $u\equiv v$. Thus $u$ is harmonic near $q$. The conclusion then follows.

\textbf{Step (4)}. Now if $u(q) \le I(u,q,r)$ for all $q, r$, we show $\Delta u \ge 0$.

Suppose, for contradiction, that $\Delta u(q_0) < 0$ at some point $q_0 \in M$. Then by continuity, there exists a small $\epsilon > 0$ and a geodesic ball $B_\delta(q_0)$ such that $\Delta u < -\epsilon < 0$ on $B_\delta(q_0)$. 

Choose $r_0 \in (0, \delta)$ sufficiently small so that $A(q_0, r_0) \subset B_\delta(q_0)$ and $|\nabla b_{q_0}| \ne 0$ on $A(q_0, r_0)$. From Step (1), for all $r \in (0, r_0)$, we have the identity
\begin{align}
\frac{d}{dr}I(u, q_0, r) = r^{1-n} \int_{b_{q_0} \le r} \Delta u< 0. \label{deriv-sub}
\end{align}

We must have
\[
I(u, q_0, r) < \lim_{r \to 0^+} I(u, q_0, r) = n\omega_n u(q_0) \quad \text{for all } r \in (0, r_0).
\]

But this contradicts the hypothesis that $u(q_0) \le I(u, q_0, r)$ for all $r > 0$.

Therefore, our assumption that $\Delta u(q_0) < 0$ must be false, and $\Delta u \ge 0$ everywhere on $M^n$.
}
\qed

\section{Divergence theorem for geodesic balls}

For the exponential map $\exp_q: T_qM^n\rightarrow M^n$, we define $\mathcal{R}:\mathbb{S}^{n-1}\to \mathbb{R}_+\cup\{\infty\}$ as follows:
    \begin{align*}
        \mathcal{R}(\theta):= \sup\{r>0:\ d_g(q, \exp_q((r+ \epsilon)\theta))= r+ \epsilon \ \text{for some}\ \epsilon> 0 \}.
    \end{align*}
    
Define 
    \begin{align}
       & A=\{t\cdot \theta:\theta\in \mathbb{S}^{n-1},0\leq t<\mathcal{R}(\theta)\}, \nonumber \\
       & \Sigma(t)=\{\theta\in\mathbb{S}^{n-1}:\mathcal{R}(\theta)>t\}, \quad \quad        \tilde\Sigma(t)=\{\theta\in\mathbb{S}^{n-1}:\mathcal{R}(\theta)\geq t\}. \nonumber 
    \end{align}
     For fixed $q\in M$, we define the geodesic ball $B_q(t)= \{x\in M^n: d_g(q, x)< t\}$. We have
    \begin{align*}
        &\exp_q(t\cdot\Sigma(t))\subset\exp_q(t\cdot\tilde\Sigma(t))=\partial B_q(t).
    \end{align*}
    
Define 
\begin{align*}
    Z=\{t>0:\mathcal{H}^{n-1}(\tilde\Sigma(t)\backslash \Sigma(t))>0\}.
\end{align*}
Let $\partial_*B_q(t)$ be the measure theoretic boundary defined by
    \begin{align}
        \partial_*B_q(t)=\left\{y\in M:\begin{array}{ll}
           \varlimsup\limits_{r\to 0}\frac{\mathcal{H}^n(B_y(r)\cap B_q(t))}{r^n}>0\ \text{and}\ 
             \varlimsup\limits_{r\to 0}\frac{\mathcal{H}^n(B_y(r)\backslash B_q(t))}{r^n}>0
        \end{array}\right\}.\label{measure theoretic boundary}
    \end{align}

\begin{lemma}\label{lem geodesic ball has locally finite perimeter}
{For $t\in \mathbb{R}^+\backslash Z$, the geodesic ball $B_q(t)$ has locally finite perimeter and 
\begin{align}
&\mathcal{H}^{n- 1}(\partial B_q(t)\backslash \exp_q(t\cdot \Sigma(t)))= 0. \label{vanish diff on sphere}
\end{align}
}
\end{lemma}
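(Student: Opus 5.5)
The plan is to use that $\mathcal{R}$ is the cut-distance function, and to show that the cut locus contributes no $(n-1)$-measure to $\partial B_q(t)$ for $t\notin Z$. The finite perimeter part comes first and is essentially independent of $t$.

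\textbf{Step 1 (finite perimeter for every $t$).} The distance function $\rho=d_g(q,\cdot)$ is $1$-Lipschitz, hence $\rho\in W^{1,\infty}_{loc}$ with $|\nabla\rho|=1$ a.e. By the coarea formula for BV/Lipschitz functions, $\int_0^T P(B_q(t), U)\,dt\le \int_U|\nabla \rho|<\infty$ for bounded open $U$. This gives finite perimeter for a.e. $t$.

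To get every $t$, I would use the standard fact that geodesic spheres are locally Lipschitz graphs away from a small set. A cleaner route is the following. $B_q(t)$ is star-shaped along minimizing geodesics, so $\chi_{B_q(t)}$ is the $L^1_{loc}$-limit of $\chi_{B_q(t_k)}$ for $t_k\uparrow t$. By the coarea bound, one can choose the $t_k$ with uniformly bounded perimeters, namely $P(B_q(s),U)\le C$ on a set of positive measure near $t$, using the Bishop comparison $\mathcal{H}^{n-1}(\partial B_q(s))\le n\omega_n s^{n-1}$. Lower semicontinuity of perimeter then gives $P(B_q(t),U)\le \liminf P(B_q(t_k),U)<\infty$. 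In fact this yields $P(B_q(t))\le \mathcal{H}^{n-1}(\partial B_q(t))\le n\omega_nt^{n-1}$.

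\textbf{Step 2 (the measure statement).} Since $\exp_q(t\tilde\Sigma(t))=\partial B_q(t)$, we have
\[
\partial B_q(t)\setminus \exp_q(t\Sigma(t))\subset \exp_q\bigl(t(\tilde\Sigma(t)\setminus\Sigma(t))\bigr).
\]
The points $t\theta$ with $\theta\in\tilde\Sigma(t)\setminus\Sigma(t)$ satisfy $\mathcal{R}(\theta)=t$, so they are cut points. Now $\exp_q$ is smooth, hence Lipschitz on the compact sphere $\{|v|=t\}\subset T_qM$. Therefore
\[
\mathcal{H}^{n-1}\bigl(\exp_q(t(\tilde\Sigma\setminus\Sigma))\bigr)\le C(t)\,t^{n-1}\,\mathcal{H}^{n-1}(\tilde\Sigma(t)\setminus\Sigma(t)),
\]
and the right-hand side is $0$ when $t\notin Z$. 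This gives (\ref{vanish diff on sphere}).

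\textbf{Main obstacle.} The delicate step is Step 1 at every $t$ rather than a.e. $t$, specifically the uniform bound on $\mathcal{H}^{n-1}(\partial B_q(s))$. I would obtain this bound via the Jacobian of $\exp_q$ on $s\Sigma(s)$, using Bishop–Gromov's Jacobian comparison $J\le s^{n-1}$, combined with the Lipschitz image estimate of Step 2 for the remaining part. One must also check that $\partial_*B_q(t)\subset\partial B_q(t)$, so that the reduced boundary is controlled by $\mathcal{H}^{n-1}(\partial B_q(t))$. By Federer's criterion, a set whose topological boundary has finite $\mathcal{H}^{n-1}$ measure (locally) has locally finite perimeter, and this closes the argument directly.
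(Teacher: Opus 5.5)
Your proposal is correct and is essentially the paper's proof: the measure statement comes from the Lipschitz-image bound for $\theta\mapsto\exp_q(t\theta)$ applied to the null set $\tilde\Sigma(t)\setminus\Sigma(t)$, and finite perimeter comes from $\mathcal{H}^{n-1}(\partial_*B_q(t))\le\mathcal{H}^{n-1}(\partial B_q(t))<\infty$ together with Federer's criterion. The coarea/lower-semicontinuity detour and the Bishop--Gromov Jacobian bound in your Step 1 are unnecessary, and your ``main obstacle'' is not one: $\partial B_q(t)$ lies in the Lipschitz image of the compact sphere $\{|v|=t\}\subset T_qM$, so it has finite $\mathcal{H}^{n-1}$-measure for every $t$, with no curvature assumption needed.
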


\pf
{Firstly we fix $t\in \mathbb{R}^+\backslash Z$ in the rest argument. Note the map $F(v)= \exp_q(t\cdot v): \mathbb{S}^{n- 1}\rightarrow (M^n, g)$ is a Lipschitz map with Lipschitz constant $C> 0$. Now from the area formula, for $t\notin Z$, we get
\begin{align}
&\mathcal{H}^{n- 1}(\partial B_q(t)\backslash \exp_q(t\cdot \Sigma(t)))= \mathcal{H}^{n- 1}\Big(\exp_q\big(t\cdot \tilde{\Sigma}(t)\big)\backslash \exp_q\big(t\cdot \Sigma(t)\big)\Big) \nonumber \\
&\leq C\cdot \mathcal{H}^{n- 1}( \tilde{\Sigma}(t)\backslash \Sigma(t))= 0. \nonumber 
\end{align}

Now we have 
\begin{align}
\mathcal{H}^{n- 1}(\partial_* B_q(t))&\leq \mathcal{H}^{n- 1}(\partial B_q(t)) \nonumber \\
&\leq \mathcal{H}^{n- 1}(\partial B_q(t)\backslash \exp_q(t\cdot \Sigma(t)))+ \mathcal{H}^{n- 1}(\exp_q(t\cdot \Sigma(t)))\nonumber \\
&\leq C\cdot \mathcal{H}^{n- 1}(\Sigma(t))\leq C\cdot \mathcal{H}^{n- 1}(\mathbb{S}^{n- 1})< \infty. \label{finite perimeter}
\end{align}

The conclusion follows from (\ref{finite perimeter}) and \cite[Theorem $5.23$]{EG}. 
}
\qed

Let $\{\theta, v_2, \cdots, v_n\}$ be an orthonormal basis of the tangent space $T_q M^n\simeq \mathbb{R}^n$. Here, $\theta, v_i\in \mathbb{S}^{n-1}$, since the unit tangent sphere at $q\in M^n$ is diffeomorphic to $\mathbb{S}^{n-1}$. And we regard $T_{t\theta}(T_qM^n)\simeq T_qM^n\simeq \mathbb{R}^n$, denote
\begin{align*}
&W_i(t, \theta)=d\exp_q(t\theta)(tv_i), \quad \quad \forall 2\leq i\leq n,  t> 0,\nonumber \\
&J(t, \theta) = \sqrt{\det\left(\left\langle W_i(t, \theta), W_j(t, \theta) \right\rangle_{i,j=2}^n\right)}, \quad \quad \forall t> 0, \theta\in \mathbb{S}^{n- 1}. \nonumber 
\end{align*}

By Gauss Lemma, we have
\begin{align}
  &  \langle d\exp_p(t\theta)(\theta), d\exp_p(t\theta)(\theta) \rangle=1, \label{unit entry} \\
   & \langle d\exp_p(t\theta)(\theta), d\exp_p(t\theta)(v_i) \rangle=0,\quad i=2,\cdots n. \label{perpendicular entry} 
\end{align}

Consider the map $\exp_q: \mathbb{R}^n\to M^n$, then 
\begin{align}
\mathrm{Jac}(\exp_q)(t\cdot \theta)&= \sqrt{\mathrm{det}[(D\exp_q)\cdot (D\exp_q)^*]}\nonumber\\
&= \sqrt{\det\left(\left\{ \langle d\exp_q(t\theta)(v_i), d\exp_q(t\theta)(v_j) \rangle \right\}_{i,j=2}^n\right)}  \nonumber \\
&= \frac{J(t, \theta)}{t^{n- 1}} . \label{Jacobian of exp map}
\end{align}

Assume $z= (z_1, \cdots, z_n)= t\cdot \theta\in \mathbb{R}^n$, we know $dz_1 \cdots dz_n = t^{n-1}dtd\theta$. From the area formula (see \cite[Theorem $3.9$]{EG}), for any $L^n$-measurable function $h: \mathbb{R}^n\rightarrow \mathbb{R}$, we get
\begin{align}
&\int_{\mathbb{R}^n} h(t\cdot \theta)J(t, \theta)dtd\theta= \int_{\mathbb{R}^n} h(t\cdot \theta)\mathrm{Jac}(\exp_q) t^{n- 1}dtd\theta \nonumber \\
&= \int_{\mathbb{R}^n} h(z)\mathrm{Jac}(\exp_q) dz_1\cdots dz_n = \int_{M^n} \Big[\sum_{t\cdot \theta\in \exp_q^{-1}(x)}h(t\cdot \theta)\Big] d\mu(x). \label{chaning variable formula}
\end{align}

Similarly for fixed $t> 0$, consider the map $\tilde{F}(\theta)= \exp_q(t\cdot \theta): \mathbb{S}^{n-1}\rightarrow M^n$, from (\ref{Jacobian of exp map}), we get
\begin{align}
\mathrm{Jac}(\tilde{F})= \mathrm{Jac}(\exp_q)\cdot t^{n-1}= J(t, \theta). \nonumber 
\end{align}

For any $\mathcal{H}^{n-1}$-measurable function $h: \mathbb{S}^{n- 1}\rightarrow \mathbb{R}$, we have
\begin{align}
&\int_{\mathbb{S}^{n- 1}} h(\theta)J(t, \theta)d\theta= \int_{\mathbb{S}^{n- 1}} h(\theta)\mathrm{Jac}(\tilde{F}) d\theta \nonumber \\
&= \int_{\mathbb{S}^{n- 1}(t)} h(\frac{y}{t})\mathrm{Jac}(\exp_q) dy = \int_{M^n} \Big[\sum_{t\cdot \theta\in \exp_q^{-1}(x)}h(\theta)\Big] d\mu(x). \label{chaning variable formula-slice}
\end{align}

\begin{lemma}\label{lem divergence for geodesic ball}
{We have $\mathcal{H}^{1}(Z)=0$ and
\begin{align}
        \int_{B_q(t)}\Delta f= \int_{\exp_q(t\cdot\Sigma(t))}\frac{\partial f}{\partial \vec{n}}\mathrm{d}\mu_{\partial B_q(t)},\quad \quad \quad  t\in (0,\infty)\backslash Z .\label{eq derivative of f negative except null set}
    \end{align}
}
\end{lemma}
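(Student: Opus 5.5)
First, $\mathcal{H}^1(Z)=0$. For $t\neq s$ the sets $\tilde\Sigma(t)\backslash\Sigma(t)=\{\theta:\mathcal{R}(\theta)=t\}$ and $\tilde\Sigma(s)\backslash\Sigma(s)$ are disjoint subsets of $\mathbb{S}^{n-1}$. Since $\mathcal{H}^{n-1}(\mathbb{S}^{n-1})<\infty$, only countably many of these level sets can carry positive measure: for each $k$, fewer than $k\,\mathcal{H}^{n-1}(\mathbb{S}^{n-1})$ of them have measure $>1/k$. Hence $Z$ is countable, and in particular $\mathcal{H}^1(Z)=0$. I will also check that $\mathcal{R}$ is measurable (indeed continuous with values in $(0,\infty]$, the standard continuity of the cut distance), so that these sets are measurable.

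For the divergence identity, fix $t\notin Z$. By Lemma \ref{lem geodesic ball has locally finite perimeter}, $B_q(t)$ has locally finite perimeter, so the Gauss--Green theorem for sets of finite perimeter (De Giorgi--Federer, \cite{EG}) gives
\begin{align}
\int_{B_q(t)}\Delta f=\int_{\partial_*B_q(t)}\langle\nabla f,\nu\rangle\, d\mathcal{H}^{n-1}, \nonumber
\end{align}
where $\nu$ is the measure-theoretic outer unit normal, defined $\mathcal{H}^{n-1}$-a.e. on the reduced boundary. I will apply this in charts with a partition of unity, which is legitimate because $\overline{B_q(t)}$ is compact and $f$ is smooth. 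It remains to identify the boundary integral with the integral over $\exp_q(t\cdot\Sigma(t))$, and $\nu$ with the radial direction $\partial_\rho=d\exp_q(t\theta)(\theta)$.

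\textbf{Step 1 (the smooth part).} Write $S=\exp_q(t\cdot\Sigma(t))$. If $\theta\in\Sigma(t)$, then $\mathcal{R}(\theta)>t$, so $t\theta$ lies strictly inside the segment domain. Hence $\rho=d(q,\cdot)$ is smooth near $x=\exp_q(t\theta)$, with $|\nabla\rho|=1$. Near $x$ the ball $B_q(t)=\{\rho<t\}$ is therefore a smooth domain whose boundary is the level set $\{\rho=t\}$, with outer normal $\nabla\rho=\partial_\rho$. So every point of $S$ lies in the reduced boundary, and $\nu=\nabla\rho$ there.

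\textbf{Step 2 (the rest is null).} By \eqref{vanish diff on sphere}, $\mathcal{H}^{n-1}(\partial B_q(t)\backslash S)=0$. Since $\partial_*B_q(t)\subseteq\partial B_q(t)$, the Gauss--Green integral reduces to an integral over $S$, which gives \eqref{eq derivative of f negative except null set} with $\partial f/\partial\vec n=\langle\nabla f,\nabla\rho\rangle$.

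\textbf{Main obstacle.} Step 1 needs care on one point: the definition of $\mathcal{R}$ uses ``minimizing up to $r+\epsilon$'', which is the cut time. I have to ensure that $\rho$ is smooth near $\exp_q(t\theta)$ when $t<\mathcal{R}(\theta)$, i.e.\ that such points are not cut points. This follows because points before the cut time have a unique minimizing geodesic and are not conjugate. Beyond that, I must make sure the Gauss--Green formula, stated in \cite{EG} for Euclidean space, transfers to the Riemannian setting. My approach is to work locally in coordinates, where $\Delta f$ becomes the divergence $\partial_i(\sqrt{g}\,g^{ij}\partial_j f)/\sqrt{g}$ of a Lipschitz vector field, and the Euclidean formula applies after weighting by $\sqrt{g}$.
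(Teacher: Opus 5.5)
Your proposal is correct. For the divergence identity you follow the paper's route. You apply Gauss--Green for the finite-perimeter set $B_q(t)$ from \cite{EG}, then use $\exp_q(t\cdot\Sigma(t))\subseteq\partial_*B_q(t)\subseteq\partial B_q(t)$ together with \eqref{vanish diff on sphere}. You also spell out two things the paper leaves implicit:
\begin{itemize}
\item $\rho$ is smooth near $\exp_q(t\theta)$ for $\theta\in\Sigma(t)$, so such points lie in the reduced boundary with outer normal $\nabla\rho=d\exp_q(t\theta)(\theta)$;
\item the Euclidean Gauss--Green theorem transfers to the Riemannian volume and normal via charts.
\end{itemize}

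The genuine difference is in the proof that $\mathcal{H}^1(Z)=0$. The paper takes $\mathcal{H}^n(\partial A)=0$ from Proposition III.3.1 of \cite{CI} and integrates in polar coordinates to get $\int_0^\infty\mathcal{H}^{n-1}(\{\mathcal{R}=t\})\,t^{n-1}\,dt=0$. This only shows that $Z$ is Lebesgue-null. You instead observe that the sets $\{\mathcal{R}=t\}=\tilde\Sigma(t)\backslash\Sigma(t)$ are pairwise disjoint measurable subsets of the finite-measure sphere, so at most countably many of them can have positive measure.

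Your argument is more elementary: it uses only measurability of $\mathcal{R}$ and no fact about the cut locus. It also gives the stronger conclusion that $Z$ is countable. That is more than Lemma \ref{lem generalized divergence theorem for geodesic ball} needs, since there one only approximates $t$ from below by values outside $Z$. The paper's version has the modest advantage of staying inside the polar-coordinate and area-formula bookkeeping it uses throughout that section.
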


\pf
{\textbf{Step (1)}. Note
    \begin{align*}
        \partial A=\{\mathcal{R}(\theta)\theta:\theta\in \mathbb{S}^{n-1}\},
    \end{align*}
then from \cite[Proposition III.3.1]{CI}, we have
\begin{equation*}
    \mathcal{H}^n(\partial A)=\int_{\R^n}\chi_{\partial A}dx=\int_{\mathbb{S}^{n-1}}d\theta \int_{0}^\infty \chi_{\partial A}(t\theta)t^{n-1}dt=0.
\end{equation*}

    From the coarea formula, we have
    \begin{align*}
        0&=\mathcal{H}^n(\partial A)= \int_0^\infty\int_{\{\mathcal{R}(\theta)
        =t\}}t^{n-1}\mathrm{d}\theta\mathrm{d}t=\int_0^\infty\int_{\tilde\Sigma(t)\backslash\Sigma(t)}t^{n-1}\mathrm{d}\theta\mathrm{d}t 
        =\int_0^\infty\mathcal{H}^{n-1}(\tilde\Sigma(t)\backslash\Sigma(t))t^{n-1}\mathrm{d}t.
    \end{align*}
    So we must have $\mathcal{H}^1(Z)=0$.

\textbf{Step (2)}. From \cite[Theorem 5.16]{EG} and Lemma \ref{lem geodesic ball has locally finite perimeter}, the divergence theorem can be applied to $B_q(t)$ to get
    \begin{align}
        \int_{B_q(t)}\Delta f=\int_{\partial_*B_q(t)}\frac{\partial f}{\partial \vec{n}}\mathrm{d}\mu_{\partial B_q(t)}\quad \forall t\in (0,\infty)\backslash Z.  \label{divergence on sphere} 
    \end{align}
    
Since $\exp_q$ is a diffeomorphism on $A$ from \cite[Theorem III.2.2]{CI}, we have $\exp_q(t\cdot\Sigma(t))\subset \partial_* B_q(t)$. Combining (\ref{vanish diff on sphere}), we have $\mathcal{H}^{n- 1}(\partial_*B_q(t)\backslash \exp_q(t\cdot\Sigma(t)))= 0$. Therefore we get 
\begin{align}
\int_{\partial_*B_q(t)}\frac{\partial f}{\partial \vec{n}}\mathrm{d}\mu_{\partial B_q(t)}=\int_{\exp_q(t\cdot\Sigma(t))}\frac{\partial f}{\partial \vec{n}}\mathrm{d}\mu_{\partial B_q(t)},  \quad \forall  t\in (0,\infty)\backslash Z. \label{reduced to smooth part}
\end{align}    

The conclusion follows from (\ref{divergence on sphere}) and (\ref{reduced to smooth part}). 
}
\qed

\begin{definition}\label{def tilde f defined by f}
{For any $f\in C^\infty(M^n)$, define 
\begin{align}
\tilde{f}: \mathbb{R}^+\times \mathbb{S}^{n-1}\rightarrow M^n, \nonumber 
\end{align}
by $\tilde{f}(t, \theta)= f(\exp_q(t\theta))$. Note $\tilde{f}$ is a smooth function.
}
\end{definition}

\begin{lemma}[Generalized divergence theorem for geodesic ball]\label{lem generalized divergence theorem for geodesic ball}
    Suppose that $f\in C^\infty(M^n)$, then
        \begin{align}
        \int_{\tilde\Sigma(t)}\frac{\partial \tilde{f}(t,\theta)}{\partial t} J(t,\theta)\mathrm{d}\theta=\int_{B_q(t)}\Delta f, \quad \quad \forall t> 0. \label{eq derivative of f negative}
    \end{align}
\end{lemma}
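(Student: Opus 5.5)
Two cases, by whether $t\notin Z$ or $t\in Z$. For $t\notin Z$ we transfer Lemma \ref{lem divergence for geodesic ball} to the tangent space. For $t\in Z$ we use continuity in $t$ of both sides, together with $\mathcal{H}^1(Z)=0$.

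\textbf{Case $t\notin Z$.} On $A$ the map $\exp_q$ is a diffeomorphism (\cite[Theorem III.2.2]{CI}). So at a point $x=\exp_q(t\theta)$ with $\theta\in\Sigma(t)$, the outward unit normal of $\partial B_q(t)$ is the radial direction $\vec n=d\exp_q(t\theta)(\theta)$, by the Gauss Lemma (\ref{unit entry})--(\ref{perpendicular entry}). Hence
\[
\frac{\partial f}{\partial\vec n}(x)=\frac{\partial\tilde f}{\partial t}(t,\theta).
\]
The map $\theta\mapsto\exp_q(t\theta)$ is injective on $\Sigma(t)$. Applying the slice area formula (\ref{chaning variable formula-slice}) with $h=\chi_{\Sigma(t)}\,\partial_t\tilde f$, the right side of (\ref{eq derivative of f negative except null set}) becomes $\int_{\Sigma(t)}\partial_t\tilde f\,J\,d\theta$. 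Since $t\notin Z$, the set $\tilde\Sigma(t)\setminus\Sigma(t)$ is $\mathcal H^{n-1}$-null, so we may replace $\Sigma(t)$ by $\tilde\Sigma(t)$. This gives (\ref{eq derivative of f negative}) for all $t\notin Z$.

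\textbf{Case $t\in Z$.} Let $t_k\uparrow t$ with $t_k\notin Z$; such $t_k$ exist because $\mathcal H^1(Z)=0$.

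The right side is continuous in $t$: we have $\int_{B_q(t_k)}\Delta f\to\int_{B_q(t)}\Delta f$ by dominated convergence, since $\bigcup_k B_q(t_k)=B_q(t)$, the open ball.

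For the left side, note that $\tilde\Sigma(s)$ is decreasing in $s$ and $\bigcap_{s<t}\tilde\Sigma(s)=\{\mathcal R\ge t\}=\tilde\Sigma(t)$. Thus $\chi_{\tilde\Sigma(t_k)}\to\chi_{\tilde\Sigma(t)}$ pointwise. Also $\partial_t\tilde f(t_k,\theta)J(t_k,\theta)\to\partial_t\tilde f(t,\theta)J(t,\theta)$ by smoothness of $\tilde f$ and $J$. All these integrands are uniformly bounded on the compact set $[t/2,t]\times\mathbb S^{n-1}$, so dominated convergence gives convergence of the left side. Passing to the limit yields (\ref{eq derivative of f negative}) at $t$.

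\textbf{Main obstacle.} The delicate step is justifying the normal-derivative identification on all of $\exp_q(t\Sigma(t))$, and checking that this set is exactly where the measure-theoretic boundary is a smooth hypersurface. For $\theta\in\Sigma(t)$, the point $t\theta$ lies in the interior of $A$, so $\exp_q$ is a local diffeomorphism there. Near $x$, the distance function $\rho$ is then smooth with $\nabla\rho=\vec n$, so $\partial B_q(t)$ is locally the smooth level set $\{\rho=t\}$. I also need to confirm that the approximation from below handles the fact that $\tilde\Sigma$ uses the closed condition $\mathcal R\ge t$. This is exactly why left limits are taken rather than right limits; from the right one would instead get $\Sigma(t)$.
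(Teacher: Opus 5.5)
Your proposal is correct and follows essentially the same route as the paper. First you prove the identity for $t\notin Z$ by combining Lemma \ref{lem divergence for geodesic ball} with the Gauss lemma and the slice area formula (\ref{chaning variable formula-slice}). Then you extend it to every $t>0$ by left-continuity of both sides, via dominated convergence with $\chi_{\tilde\Sigma(s)}\to\chi_{\tilde\Sigma(t)}$ and $\chi_{B_q(s)}\to\chi_{B_q(t)}$ as $s\to t^-$, together with $\mathcal{H}^1(Z)=0$. The one small difference is that you get $\bigcap_{s<t}\tilde\Sigma(s)=\tilde\Sigma(t)$ directly from the definition, so the paper's appeal to continuity of $\mathcal{R}$ is not needed for that step.
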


\pf
{\textbf{Step (1)}.  From Gauss' Lemma, we have 
     \begin{align}
    \frac{\partial}{\partial t}\tilde{f}(t, \theta)= \nabla f\cdot \frac{\partial}{\partial t}\exp_q(t\theta)= \nabla f\cdot \vec{n}= \frac{\partial f}{\partial \vec{n}}(x)  , \quad \quad \forall t\cdot \theta\in A, x= \exp_q(t\cdot \theta), \label{partial diff eq}
     \end{align}
     where $\vec{n}$ is the out-forward unit normal vector of $\partial B_q(t)$. 
     
      From the definition of $Z$, we have 
     \begin{align}
        \int_{\tilde\Sigma(t)}\frac{\partial \tilde{f}(t,\theta)}{\partial t} J(t,\theta)\mathrm{d}\theta &=\int_{\Sigma(t)}\frac{\partial \tilde{f}(t,\theta)}{\partial t} J(t,\theta)\mathrm{d}\theta \quad \quad \forall t\in (0,\infty)\backslash Z. \label{first eq about inte-for Z}
    \end{align} 
     
     Now from (\ref{chaning variable formula-slice}) and (\ref{partial diff eq}), we have
    \begin{align}
        \int_{\Sigma(t)}\frac{\partial \tilde{f}(t,\theta)}{\partial t} J(t,\theta)\mathrm{d}\theta= \int_{\exp_q(t\cdot\Sigma(t))}\frac{\partial f}{\partial \vec{n}}\mathrm{d}\mu_{\partial B_q(t)} . \label{first eq about inte-2}
    \end{align}
    
     By (\ref{first eq about inte-for Z}) and (\ref{first eq about inte-2}) and Lemma \ref{lem divergence for geodesic ball}, we get
      \begin{align}
         \int_{\tilde\Sigma(t)}\frac{\partial \tilde{f}(t,\theta)}{\partial t} J(t,\theta)\mathrm{d}\theta &= \int_{B_q(t)}\Delta f, \quad \forall t\in (0,\infty)\backslash Z. \label{first eq about inte}
    \end{align}
    
 From \cite[Theorem III.2.1]{CI}, we know $\mathcal{R}$ is continuous. Then using the definition of $\mathcal{R}$ and its continuous property, we get
 \begin{align}
 \lim_{s\rightarrow t-}\chi_{\tilde\Sigma(s)}(x)= \chi_{\tilde\Sigma(t)}(x), \quad \quad \forall x\in \mathbb{S}^{n- 1}. \label{point wise convergence of char set func}
\end{align}  
    
    Also note $\tilde{f}$ is a smooth functions and $J$ is bounded measurable function. By Lebesgue's Dominated Convergence Theorem and (\ref{point wise convergence of char set func}), we get
    \begin{align}
        \lim_{s\to t-}\int_{\tilde\Sigma(s)}\frac{\partial \tilde{f}(s,\theta)}{\partial s} J(s,\theta)\mathrm{d}\theta=\int_{\tilde\Sigma(t)}\frac{\partial \tilde{f}(t,\theta)}{\partial t} J(t,\theta)\mathrm{d}\theta. \label{2nd eq about inte}
    \end{align}
    
    Similarly we also have
    \begin{align}
        \lim_{s\to t-}\int_{B_q(s)}\Delta f=\int_{B_q(t)}\Delta f ,\label{3rd eq about inte}
    \end{align}
    where we use the definition $B_q(t)= \{x: d(q, x)< t\}$ to get $\displaystyle \lim_{s\rightarrow t-}\chi_{B_q(s)}(x)= \chi_{B_q(t)}(x)$.
    
    Therefore by (\ref{first eq about inte}), (\ref{2nd eq about inte}), (\ref{3rd eq about inte}) and $\mathcal{H}^1(Z)= 0$ (by Lemma \ref{lem divergence for geodesic ball}), we get (\ref{eq derivative of f negative}).
}
\qed

\section{Sharp mean value inequality}

The following is a version of Bishop-Gromov's volume comparison theorem, which we will use in later argument.
\begin{lemma}\label{lem derivative of J}
For $(M^n, g)$ with $Rc\geq 0$, we have
\begin{align}
t_1^{1-n} J(t_1, \theta)\geq t_2^{1-n} J(t_2, \theta), \quad \quad \forall 0< t_1\leq t_2\leq \mathcal{R}(\theta). \nonumber 
\end{align}
\end{lemma}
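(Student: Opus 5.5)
The plan is to reduce the statement to a Riccati-type differential inequality for the logarithmic derivative of $J$ along the fixed geodesic $\gamma(t)=\exp_q(t\theta)$, on the interval $(0,\mathcal{R}(\theta))$ where the geodesic is minimizing and hence free of conjugate points. Fix $\theta$ and choose parallel orthonormal fields $E_2,\dots,E_n$ along $\gamma$, normal to $\gamma'$. The vectors $W_i(t,\theta)=d\exp_q(t\theta)(tv_i)$ are then the values of the Jacobi fields $Y_i$ with $Y_i(0)=0$, $Y_i'(0)=v_i$. Consequently $J(t,\theta)=|\det A(t)|$, where $A(t)$ is the matrix of $(Y_2,\dots,Y_n)$ in the frame $\{E_i\}$, and it satisfies $A''+\mathcal{K}A=0$ with $\mathcal{K}_{ij}=\langle R(E_i,\gamma')\gamma',E_j\rangle$, $A(0)=0$, $A'(0)=I$. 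For $0<t<\mathcal{R}(\theta)$ there are no conjugate points (see \cite[Chapter III]{CI}), so $A(t)$ is invertible and $J>0$ is smooth in $t$ there.

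Next, set $U=A'A^{-1}$. Differentiating $A^TA'-A'^TA$ shows it is constant, and since it vanishes at $t=0$, $U$ is symmetric. It satisfies the Riccati equation $U'+U^2+\mathcal{K}=0$. Taking traces and writing $m=\operatorname{tr}U=\partial_t\log J$ (Jacobi's formula), we get $m'+\operatorname{tr}(U^2)+\operatorname{Rc}(\gamma',\gamma')=0$. Cauchy--Schwarz for symmetric matrices gives $\operatorname{tr}(U^2)\ge m^2/(n-1)$, and with $\operatorname{Rc}\ge 0$ this yields
\[
m'\le -\frac{m^2}{n-1}.
\]
Near $t=0$ we have $A(t)=tI+O(t^3)$, hence $m(t)=\frac{n-1}{t}+O(t)$. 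Comparing with the model solution, $\frac{d}{dt}\big(\frac{1}{m}\big)=-\frac{m'}{m^2}\ge \frac{1}{n-1}$ as long as $m>0$, which gives $m(t)\le \frac{n-1}{t}$. One also checks that $m$ cannot become nonpositive and then violate the bound, since $m\le 0<\frac{n-1}{t}$ already satisfies it. Therefore
\[
\partial_t\log\big(t^{1-n}J(t,\theta)\big)=m-\frac{n-1}{t}\le 0 \quad \text{on } (0,\mathcal{R}(\theta)),
\]
so $t^{1-n}J$ is non-increasing on the open interval.

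Finally, the endpoint $t_2=\mathcal{R}(\theta)$, when finite, follows by continuity: $J(t,\theta)$ is continuous in $t$ for all $t$, being a smooth expression in $d\exp_q$, so we may pass to the limit $t_2\to\mathcal{R}(\theta)^-$. The main obstacle I expect is handling the singular behavior at $t=0$ and making the comparison rigorous. I would handle it by integrating $\frac{d}{dt}(1/m)\ge\frac{1}{n-1}$ from $\epsilon$ to $t$ and letting $\epsilon\to0$, using $1/m(\epsilon)=\frac{\epsilon}{n-1}+O(\epsilon^3)$. Alternatively, the statement may simply be quoted as the standard Bishop--Gromov comparison along minimizing geodesics.
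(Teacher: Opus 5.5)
Your proposal is correct and follows essentially the same route as the paper. Both establish monotonicity of $t^{1-n}J(t,\theta)$ on the open interval $(0,\mathcal{R}(\theta))$ via Bishop's comparison, then pass to the endpoint $t_2=\mathcal{R}(\theta)$ using continuity of $J$ in $t$. The paper simply quotes the differential inequality from Corollary 2.2 of Li's book, whereas you re-derive it self-containedly through the Jacobi-field matrix $A$ and the traced Riccati inequality $m'\le -m^2/(n-1)$.
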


\pf
{From \cite[Corollary 2.2]{Li book}, we have
\begin{align}
        \frac{\partial (t^{1-n}J(t,\theta))}{\partial t}\leq 0, \quad \quad \forall 0< t< \mathcal{R}(\theta),\,\theta\in \mathbb{S}^{n- 1}. \nonumber 
    \end{align}

Then we get 
    \begin{align}
    t_1^{1-n} J(t_1, \theta)\geq t_2^{1-n} J(t_2, \theta), \quad \quad \forall 0< t_1\leq t_2< \mathcal{R}(\theta). \label{mono of volume ratio}
    \end{align}
    Note $J(t,\theta)$ is well defined and continuous for $t> 0$, from (\ref{mono of volume ratio}) we have 
     \begin{align}
    t_1^{1-n} J(t_1, \theta)\geq \lim_{t_2\rightarrow \mathcal{R}(\theta)^-}t_2^{1-n} J(t_2, \theta)= \mathcal{R}(\theta)^{1-n} J(\mathcal{R}(\theta), \theta), \quad \quad \forall 0< t_1< \mathcal{R}(\theta). \label{mono of volume ratio-2}
    \end{align}

From (\ref{mono of volume ratio}) and (\ref{mono of volume ratio-2}), we get
\begin{align}
&t_1^{1-n} J(t_1, \theta)\geq t_2^{1-n} J(t_2, \theta), \quad \quad \forall 0< t_1\leq t_2\leq \mathcal{R}(\theta).
\end{align}
Then the conclusion follows.
}
\qed

Instead of considering $t^{1-n}\int_{\partial B_q(t)}f(x)d\mu(x)$, we study the general term $F(t)$ defined in (\ref{def of F}) and show it is non-increasing. Note when $t< \mathrm{inj}(q)$, $F(t)$ is exactly $t^{1-n}\int_{\partial B_q(t)}f(x)d\mu(x)$, which is used in the proof of Schoen-Yau's original argument.
\begin{lemma}\label{lem decreasing of average integral}
    Suppose that $f\in C^\infty(M^n)$ satisfies $\Delta f\leq 0$ and $f\geq 0$, define
    \begin{align}
        F(t)=t^{1-n}\int_{\tilde\Sigma(t)}\tilde{f}(t,\theta) J(t,\theta)\mathrm{d}\theta, \label{def of F}
    \end{align}
then $F$ is non-increasing.        
\end{lemma}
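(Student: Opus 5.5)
The plan is to compare $F(t)$ and $F(s)$ for $s<t$ directly, without differentiating $F$. The two inputs are Bishop--Gromov along each radial segment (Lemma \ref{lem derivative of J}) and the generalized divergence theorem (Lemma \ref{lem generalized divergence theorem for geodesic ball}), which holds for \emph{every} $\tau>0$.

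\textbf{Step 1 (discard the lost directions).} Fix $0<s<t$. Since $\tilde\Sigma(t)\subset\tilde\Sigma(s)$, $f\ge 0$ and $J\ge 0$, dropping the directions in $\tilde\Sigma(s)\setminus\tilde\Sigma(t)$ only decreases $F(s)$. Hence
\begin{align*}
F(t)-F(s)\le \int_{\tilde\Sigma(t)}\Big[t^{1-n}\tilde f(t,\theta)J(t,\theta)-s^{1-n}\tilde f(s,\theta)J(s,\theta)\Big]d\theta .
\end{align*}

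\textbf{Step 2 (radial computation).} For $\theta\in\tilde\Sigma(t)$ we have $\mathcal{R}(\theta)\ge t$, so $\tau\mapsto \tau^{1-n}\tilde f(\tau,\theta)J(\tau,\theta)$ is smooth on $[s,t)$ and continuous up to $t$. Write the bracket as $\int_s^t\partial_\tau(\cdot)\,d\tau$ and expand the derivative as
\begin{align*}
\tau^{1-n}J\,\partial_\tau\tilde f+\tilde f\,\partial_\tau(\tau^{1-n}J).
\end{align*}
The second term is $\le 0$ by $f\ge0$ and Lemma \ref{lem derivative of J}. After Fubini,
\begin{align*}
F(t)-F(s)\le\int_s^t\tau^{1-n}\int_{\tilde\Sigma(t)}\partial_\tau\tilde f(\tau,\theta)J(\tau,\theta)\,d\theta\,d\tau .
\end{align*}

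\textbf{Step 3 (divergence theorem plus error term).} This is where the main obstacle lies. Lemma \ref{lem generalized divergence theorem for geodesic ball} integrates over $\tilde\Sigma(\tau)$, not $\tilde\Sigma(t)$. On the difference set the sign of $\partial_\tau\tilde f$ is uncontrolled, and this set may carry positive measure when $t\in Z$. Write
\begin{align*}
\int_{\tilde\Sigma(t)}\partial_\tau\tilde f\,J=\int_{B_q(\tau)}\Delta f-\int_{\tilde\Sigma(\tau)\setminus\tilde\Sigma(t)}\partial_\tau\tilde f\,J .
\end{align*}
The first term is $\le 0$. For $\tau\in(s,t)$ the difference set is $\{\tau\le\mathcal{R}<t\}\subset\{s<\mathcal{R}<t\}$; crucially this excludes the possibly fat level set $\{\mathcal{R}=s\}$. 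Using $|\partial_\tau\tilde f|\le\sup_{B_q(b)}|\nabla f|$ and the bound $\tau^{1-n}J(\tau,\theta)\le 1$, which follows from Lemma \ref{lem derivative of J} and $J(\tau,\theta)\sim\tau^{n-1}$ as $\tau\to0$, we obtain for $0<a\le s<t\le b$
\begin{align*}
F(t)-F(s)\le C_b\,(t-s)\,\mathcal{H}^{n-1}\big(\{\theta: s<\mathcal{R}(\theta)<t\}\big),\qquad C_b=\sup_{B_q(b)}|\nabla f| .
\end{align*}

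\textbf{Step 4 (partition).} Given $a<b$, take a partition $a=t_0<\cdots<t_N=b$ and sum the estimate of Step 3 over consecutive intervals. The sets $\{t_{i-1}<\mathcal{R}<t_i\}$ are pairwise disjoint, so
\begin{align*}
F(b)-F(a)\le C_b\,\max_i(t_i-t_{i-1})\,\mathcal{H}^{n-1}(\mathbb{S}^{n-1}),
\end{align*}
which tends to $0$ as the mesh shrinks. Hence $F(b)\le F(a)$.

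Thanks to the uniformity in Step 3, this partition argument replaces the nested-interval argument mentioned in Remark \ref{rem main novel pt in Main-1}. If the uniform bound fails (for instance in justifying Fubini when $\mathcal{R}(\theta)=t$ exactly), I would fall back on a nested-interval contradiction argument: bisect an interval on which $F$ increases by more than $\epsilon$ times its length, and use the left-continuity of $F$ (dominated convergence, as in (\ref{2nd eq about inte})) together with $\lim_{s\to t^+}F(s)\le F(t)$.
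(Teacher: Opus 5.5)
Your proof is correct, and it takes a genuinely different route from the paper's.

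The paper argues by contradiction. Assuming $F(b_0)-F(a_0)>\epsilon(b_0-a_0)$, it bisects down to a limit point $a_\infty$ at which a one-sided difference quotient of $F$ stays $\ge\epsilon$. It then bounds that quotient by a difference quotient of $\tilde f$ with the weight frozen at $a_\infty^{1-n}J(a_\infty,\theta)$; this step discards lost directions and uses Lemma \ref{lem derivative of J}, as in your Step 1. Finally it passes to the limit by dominated convergence and contradicts Lemma \ref{lem generalized divergence theorem for geodesic ball} at the single radius $a_\infty$.

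You instead keep the weight $\tau^{1-n}J(\tau,\theta)$ moving and apply the divergence identity at each $\tau\in(s,t)$. You control the mismatch between $\tilde\Sigma(t)$ and $\tilde\Sigma(\tau)$ by the slab $\{s<\mathcal{R}<t\}$. This gives the quantitative estimate $F(t)-F(s)\le C_b(t-s)\,\mathcal{H}^{n-1}(\{s<\mathcal{R}<t\})$, which sums to $O(\text{mesh})$ over a partition because the slabs are disjoint.

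What your route buys:
\begin{itemize}
\item It is direct and quantitative.
\item It needs the divergence identity only for a.e.\ $\tau$. So Lemma \ref{lem divergence for geodesic ball}, valid off the null set $Z$, already suffices, and the extension to every $t>0$ is never used.
\item It never evaluates a one-sided derivative at one specific radius that might lie in $Z$. In the paper's right-sided case this is delicate. On $\tilde\Sigma(a_\infty)\setminus\tilde\Sigma(b_i)$, Lemma \ref{lem derivative of J} does not compare $J(b_i,\theta)$ with $J(a_\infty,\theta)$. If one instead restricts to $\tilde\Sigma(b_i)$, the limit domain is $\Sigma(a_\infty)$ rather than $\tilde\Sigma(a_\infty)$.
\end{itemize}
The paper's route, in turn, stays closer to the classical Schoen--Yau derivative computation.

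Two small remarks. First, your fallback is unnecessary: $\tau^{1-n}J\,\partial_\tau\tilde f$ is bounded and continuous on $[s,t]\times\mathbb{S}^{n-1}$, so Fubini is immediate. Second, excluding $\{\mathcal{R}=s\}$ is not what makes Step 4 work, since the half-open slabs $\{t_{i-1}\le\mathcal{R}<t_i\}$ are pairwise disjoint as well.
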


\pf
{\textbf{Step (1)}.  For $t>s>0$, since $\tilde\Sigma(t)\subset\tilde\Sigma(t-s)$ for $s> 0$, note $\tilde{f}\geq 0$ and Lemma \ref{lem derivative of J}, we have
    \begin{align}
        &F(t)-F(t-s)\nonumber \\
        =&\int_{\tilde\Sigma(t)}\tilde{f}(t,\theta) t^{1-n}J(t,\theta)\mathrm{d}\theta-\int_{\tilde\Sigma(t-s)}\tilde{f}(t- s,\theta) (t-s)^{1-n}J(t-s,\theta)\mathrm{d}\theta\nonumber \\
        =&\int_{\tilde\Sigma(t)}\Big(\tilde{f}(t,\theta) t^{1-n}J(t,\theta)-\tilde{f}(t-s,\theta) (t-s)^{1-n}J(t-s,\theta)\Big)\mathrm{d}\theta\nonumber \\
        &+\int_{\mathbb{S}^{n-1}}\tilde{f}(t-s,\theta) (t-s)^{1-n}J(t-s,\theta)(\chi_{\tilde\Sigma(t)}-\chi_{\tilde\Sigma(t-s)})\mathrm{d}\theta \nonumber \\
   & \leq \int_{\tilde\Sigma(t)}\Big(\tilde{f}(t,\theta) t^{1-n}J(t,\theta)-\tilde{f}(t-s,\theta) (t-s)^{1-n}J(t-s,\theta)\Big)\mathrm{d}\theta \nonumber \\
   &\leq \int_{\tilde\Sigma(t)} [\tilde{f}(t, \theta)  - \tilde{f}(t-s, \theta)]\frac{J(t, \theta)}{t^{n- 1}}  d\theta+  \int_{\tilde\Sigma(t)} \tilde{f}(t-s, \theta)\cdot \left(  \frac{J(t, \theta)}{t^{n- 1}} - \frac{J(t-s, \theta)}{(t- s)^{n- 1}} \right) d\theta \nonumber \\
   &\leq \int_{\tilde\Sigma(t)} [\tilde{f}(t, \theta)  - \tilde{f}(t-s, \theta)] \frac{J(t, \theta)}{t^{n- 1}}  d\theta.\label{ineq about F}
    \end{align}
And we also have
\begin{align}
        &F(t+s)- F(t)\leq \int_{\tilde\Sigma(t)}\Big(\tilde{f}(t+s,\theta) (t+s)^{1-n}J(t+s,\theta)- \tilde{f}(t,\theta) t^{1-n}J(t,\theta)\Big)\mathrm{d}\theta \nonumber \\
   &\leq \int_{\tilde\Sigma(t)} [\tilde{f}(t+s,\theta) - \tilde{f}(t, \theta)  ]\frac{J(t, \theta)}{t^{n- 1}}  d\theta  .\label{ineq about F-other side}
    \end{align}
    
 \textbf{Step (2)}.   Assume $0< a_0< b_0$, we only need to show $F(b_0)- F(a_0)\leq 0$. By contradiction, assume there is $\epsilon> 0$ such that
 \begin{align}
 F(b_0)- F(a_0)> \epsilon\cdot [b_0-a_0]. \label{ineq about F a0 and b0}
\end{align}  
 
From (\ref{ineq about F a0 and b0}), for any $i\in \mathbb{Z}^+$, there is $[a_i, b_i]\subseteq [a_{i- 1}, b_{i- 1}]$ such that  
\begin{align}
&F(b_i)- F(a_i)\geq \epsilon\cdot [b_i- a_i], \label{restriction on F ai bi} \\
&b_i- a_i= \frac{b_{i- 1}- a_{i- 1}}{2}, \quad \text{and} \quad (a_i- a_{i- 1})\cdot (b_i- b_{i- 1})= 0, \nonumber 
\end{align}

From compactness of $[a_0, b_0]$, and $\displaystyle \lim_{i\rightarrow\infty}|b_i- a_i|= 0$, there is $\displaystyle a_\infty\in \bigcap_{i= 1}^\infty[a_i, b_i]$ such that 
\begin{align}
\lim_{i\rightarrow\infty}a_i= \lim_{i\rightarrow\infty}b_i= a_\infty. \label{limit of ai and bi}
\end{align}

By (\ref{restriction on F ai bi}) and (\ref{limit of ai and bi}), we get that there is a subsequence of $\{b_i\}$ or $\{a_i\}$ (for simplicity, also denoted as $\{b_i\}$ or $\{a_i\}$) such that 
\begin{align}
F(b_i)- F(a_\infty)\geq \epsilon\cdot [b_i- a_\infty] ,\label{Fbi ainfinity} \\
F(a_\infty)- F(a_i)\geq \epsilon\cdot [a_\infty- a_i] .\label{Fai ainfinity}
\end{align}

Without loss of generality, we can assume (\ref{Fbi ainfinity}) holds in the rest argument (if (\ref{Fai ainfinity}) holds, similar argument applies, we use (\ref{ineq about F}) instead).

Then from (\ref{Fbi ainfinity}) and (\ref{ineq about F-other side}), we get 
\begin{align}
&\frac{1}{[b_i- a_\infty]}\int_{\tilde\Sigma(a_\infty)} [\tilde{f}(b_i, \theta)  - \tilde{f}(a_\infty, \theta)] \frac{J(a_\infty, \theta)}{a_\infty^{n- 1}}  d\theta\geq \epsilon \label{ineq of integral-b}  . 
\end{align}

Taking the limit with respect to $i$ in (\ref{ineq of integral-b}), using (\ref{limit of ai and bi}) and Lebesgue's Dominated Convergence Theorem, we obtain
\begin{align}
\int_{\tilde\Sigma(a_\infty)} \frac{J(a_\infty, \theta)}{a_\infty^{n- 1}} \frac{\partial}{\partial t}\tilde{f}(a_\infty, \theta) d\theta\geq \epsilon> 0.\label{deriv inte is positive}
\end{align}

However from Lemma \ref{lem generalized divergence theorem for geodesic ball} and $\Delta f\leq 0$, we have
        \begin{align}
       \int_{\tilde\Sigma(a_\infty)} \frac{J(a_\infty, \theta)}{a_\infty^{n- 1}} \frac{\partial}{\partial t}\tilde{f}(a_\infty, \theta) d\theta= a_\infty^{1-n} \int_{B_q(a_\infty)}\Delta f \leq 0, \label{limit for integal-1}
        \end{align}
 it contradicts (\ref{deriv inte is positive}).
}
\qed

The following result is a generalization of \cite[Proposition $2.1$, Chapter $1$]{SY}.
\begin{theorem}[Sharp mean value inequality]\label{thm sharp mvi for supharm}
    Suppose that $f\in C^\infty(M^n)$ satisfies $\Delta f\leq 0$ and $f\geq 0$, then
        \begin{align}
        f(q)\geq \frac{1}{n\omega_n r^{n-1}}\int_{\partial B_q(r)} f, \quad \quad \quad \forall q\in (M^n, g), r>0. \label{sharp mvi for supharm}
    \end{align}
\end{theorem}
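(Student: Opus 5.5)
The plan is to combine the monotonicity of $F$ from Lemma \ref{lem decreasing of average integral} with two comparisons: one at $t\to 0^+$, where $F$ recovers $f(q)$, and one at $t=r$, where $F(r)$ dominates the boundary integral $r^{1-n}\int_{\partial B_q(r)} f$.

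\textbf{Step 1: the limit at $t\to 0^+$.} For small $t$ we have $\tilde\Sigma(t)=\mathbb{S}^{n-1}$, since $\mathcal{R}$ is continuous and positive on the compact sphere, hence bounded below by a positive constant. Also $t^{1-n}J(t,\theta)\to 1$ uniformly in $\theta$, because $d\exp_q$ is close to the identity near the origin, and $\tilde f(t,\theta)\to f(q)$ uniformly. Therefore
\begin{align*}
\lim_{t\to0^+}F(t)= f(q)\,\mathcal{H}^{n-1}(\mathbb{S}^{n-1})= n\omega_n f(q).
\end{align*}
Since $F$ is non-increasing, this gives $F(r)\le n\omega_n f(q)$ for every $r>0$.

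\textbf{Step 2: compare $F(r)$ with the boundary integral.} By the area formula (\ref{chaning variable formula-slice}) applied with $h=\tilde f(r,\cdot)\chi_{\tilde\Sigma(r)}$, we get
\begin{align*}
\int_{\tilde\Sigma(r)}\tilde f(r,\theta)J(r,\theta)\,d\theta=\int_{M^n}\Big[\sum_{r\theta\in\exp_q^{-1}(x),\,\theta\in\tilde\Sigma(r)} f(x)\Big]d\mu(x).
\end{align*}
It remains to check that every $x\in\partial B_q(r)$ has at least one preimage $r\theta$ with $\theta\in\tilde\Sigma(r)$. This holds because any minimizing geodesic from $q$ to $x$ has unit initial direction $\theta$ with $\mathcal{R}(\theta)\ge r$. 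The identity $\exp_q(r\cdot\tilde\Sigma(r))=\partial B_q(r)$ was already noted in the paper.

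Here the measure on $\partial B_q(r)$ is understood as $\mathcal{H}^{n-1}$, the appropriate notion once the radius exceeds the injectivity radius. Since $f\ge0$, counting the extra preimages only increases the sum, so
\begin{align*}
\int_{\tilde\Sigma(r)}\tilde fJ\,d\theta\ \ge\ \int_{\partial B_q(r)}f\,d\mathcal{H}^{n-1}.
\end{align*}
Multiplying by $r^{1-n}$ and combining with Step 1 yields (\ref{sharp mvi for supharm}).

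\textbf{Main obstacle.} The delicate point is Step 2. One must use the area formula to get an inequality in the right direction, which requires $f\ge0$ to absorb the multiplicity of preimages. One must also be careful that $\partial B_q(r)$ is exactly covered by $\exp_q(r\tilde\Sigma(r))$, including cut points where $\mathcal{R}(\theta)=r$.

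Step 1 is routine given the smoothness of $\exp_q$ at the origin, and the hard monotonicity work has already been done in Lemma \ref{lem decreasing of average integral}.
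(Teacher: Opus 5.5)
Your proposal is correct and is essentially the paper's own proof. Monotonicity of $F$ (Lemma \ref{lem decreasing of average integral}) gives $F(r)\le \lim_{t\to 0^+}F(t)=n\omega_n f(q)$, and the area formula (\ref{chaning variable formula-slice}), together with $f\ge 0$ and $\partial B_q(r)\subseteq \exp_q(r\cdot\tilde\Sigma(r))$, gives $r^{1-n}\int_{\partial B_q(r)} f\le F(r)$; you simply spell out two points the paper leaves implicit, namely the uniform convergence behind the limit at $0$ and why multiplicities of preimages only help.
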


\begin{remark}\label{rem improvement on SY}
{In \cite{SY}, the above result is proved for any $r< \mathrm{inj}(q)$, where $\mathrm{inj}(q)$ is the injectivity radius of $q\in (M^n, g)$. Our result removes this restriction on $r$, furthermore Theorem \ref{thm rigidity of sharp mvi for supharm} provides the corresponding rigidity result when the equation is obtained.
}
\end{remark}

\begin{proof} 
Define $F$ as in Lemma \ref{lem decreasing of average integral}, from Lemma \ref{lem decreasing of average integral}, we get 
    \begin{align}
        F(t)\leq \lim_{r\to 0^+}F(r)=n\omega_n f(q), \quad \quad \forall t> 0. \label{upper bound of Fr}
    \end{align}
  
For any $t>0$, notice that $\partial B_q(t)= \exp_q(t\cdot\tilde\Sigma(t))$. From $f\geq 0$, (\ref{chaning variable formula-slice}) and (\ref{upper bound of Fr}), we have
    \begin{align*}
       &\quad t^{1-n}\int_{\partial B_q(t)}f \mathrm{d} \mu_{\partial B_q(t)}\leq F(t) \leq n\omega_n f(q).
    \end{align*}
\end{proof} 
 
\begin{theorem}[Rigidity of mean value inequality]\label{thm rigidity of sharp mvi for supharm}
    Suppose that $f\in C^\infty(M^n)- \{0\}$ with $f\geq 0$ and $\Delta f\leq 0$. If for some $q\in (M^n, g)$ and $r>0$,
    \begin{align}
        f(q)= \frac{1}{n\omega_n r^{n-1}}\int_{\partial B_q(r)} f, \nonumber 
    \end{align}
    then $B_q(r)$ is isometric to $B^n(r)\subseteq \mathbb{R}^n$. 
\end{theorem}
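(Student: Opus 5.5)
The plan is to trace the equality case back through the chain of inequalities in the proof of Theorem \ref{thm sharp mvi for supharm} and Lemma \ref{lem decreasing of average integral}. From that proof we have
\begin{align*}
r^{1-n}\int_{\partial B_q(r)} f \le F(r)\le F(t)\le \lim_{s\to 0^+}F(s)= n\omega_n f(q), \qquad 0<t<r.
\end{align*}
Equality at $r$ forces equality everywhere, so $F(t)\equiv n\omega_n f(q)$ on $(0,r]$. First observe that $f(q)>0$. By the strong minimum principle for superharmonic functions, $f\ge 0$, $\Delta f\le 0$ and $f\not\equiv 0$ give $f>0$ on the connected manifold $M^n$. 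Hence $f\ge c>0$ on the compact set $\overline{B_q(r)}$.

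\textbf{Step 1: the Jacobian is Euclidean.} Take $0<t_1<t_2\le r$. In Step (1) of Lemma \ref{lem decreasing of average integral}, the difference $F(t_2)-F(t_1)$ is bounded above by a sum of terms. Besides the boundary integral that Lemma \ref{lem generalized divergence theorem for geodesic ball} controls, there are two nonpositive terms:
\begin{itemize}
\item the cut-locus term $\int \tilde f(t_1,\theta)t_1^{1-n}J(t_1,\theta)\big(\chi_{\tilde\Sigma(t_2)}-\chi_{\tilde\Sigma(t_1)}\big)\,d\theta$;
\item the Bishop--Gromov term $\int_{\tilde\Sigma(t_2)}\tilde f(t_1,\theta)\big(t_2^{1-n}J(t_2,\theta)-t_1^{1-n}J(t_1,\theta)\big)\,d\theta$.
\end{itemize}
I would redo the estimate in integrated form rather than by contradiction. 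Integrate the derivative identity of Lemma \ref{lem generalized divergence theorem for geodesic ball} in $t$. Equivalently, refine the nested-interval argument to show that
\[
F(t_2)-F(t_1)\le \int_{t_1}^{t_2} s^{1-n}\int_{B_q(s)}\Delta f\,ds + (\text{the two nonpositive terms}).
\]
Since $F$ is constant, every nonpositive term must vanish. Because $\tilde f\ge c>0$, this yields three facts:
\begin{itemize}
\item $\mathcal H^{n-1}(\tilde\Sigma(t_1)\setminus\tilde\Sigma(t_2))=0$ for all $t_1<t_2\le r$;
\item $t^{1-n}J(t,\theta)$ is constant in $t\in(0,r]$ for a.e.\ $\theta$, hence $J(t,\theta)=t^{n-1}$, using $\lim_{t\to0}t^{1-n}J=1$;
\item $\Delta f\equiv 0$ on $B_q(r)$.
\end{itemize}
Since $\mathcal R$ is continuous, $\{\mathcal R<r\}$ is open, and the first fact says it has measure zero, so it is empty. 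Thus $\mathcal R(\theta)\ge r$ for all $\theta$, and $\exp_q$ is a diffeomorphism from the open ball $B^n(r)\subset T_qM$ onto $B_q(r)$.

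\textbf{Step 2: from Jacobian to metric.} Now $J(t,\theta)=t^{n-1}$ on $(0,r)\times\mathbb S^{n-1}$ by continuity. So the mean curvature of the distance spheres equals $(n-1)/t$, which is the equality case in the Riccati/Bochner argument underlying Lemma \ref{lem derivative of J} (\cite[Corollary 2.2]{Li book}). Equality there forces the second fundamental form of $\partial B_q(t)$ to be $\frac1t g$ and $\mathrm{Ric}(\partial_t,\partial_t)=0$. Then the Jacobi fields along each radial geodesic are $W_i(t)=t\,E_i(t)$ with $E_i$ parallel. In polar coordinates the metric is therefore $dt^2+t^2 g_{\mathbb S^{n-1}}$, and $\exp_q$ is an isometry from $B^n(r)$ onto $B_q(r)$.

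\textbf{Main obstacle.} I expect Step 1 to be the hardest part. Lemma \ref{lem decreasing of average integral} only proves monotonicity by contradiction, so it gives no quantitative information about the discarded terms. The planned fix is to show that $F$ is absolutely continuous, or at least that $F(t_2)-F(t_1)$ is bounded by the integral of the upper Dini derivative. Then the pointwise inequalities \eqref{ineq about F} and \eqref{ineq about F-other side}, together with dominated convergence, would give the integrated identity with the two nonpositive error terms exhibited explicitly.

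If that route proves too delicate, I would use a simpler fallback. Equality at $r$ already forces, in the very first inequality of the proof of Theorem \ref{thm sharp mvi for supharm}, that the single-radius comparison $F(r)\le F(s)$ be an equality for each small $s$. I would apply the two-point bound \eqref{ineq about F} directly between $s$ and $r$, adding the boundary terms via Lemma \ref{lem generalized divergence theorem for geodesic ball} integrated over $[s,r]$ with $Z$ null. This should isolate the cut-locus and Bishop--Gromov defects without any differentiability of $F$.
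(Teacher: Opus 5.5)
Your Step 1 does not go through as written. The inequality you want to use,
\[
F(t_2)-F(t_1)\le \int_{t_1}^{t_2}s^{1-n}\int_{B_q(s)}\Delta f\,ds+(\text{your two-point Bishop--Gromov and cut-locus terms}),
\]
is false in general, so the three facts cannot be extracted from it. Put $w(s,\theta)=s^{1-n}J(s,\theta)\chi_{\tilde\Sigma(s)}(\theta)$, so that $F(t)=\int_{\mathbb{S}^{n-1}}\tilde f(t,\theta)w(t,\theta)\,d\theta$. Your two defect terms add up to exactly $\int\tilde f(t_1,\theta)[w(t_2,\theta)-w(t_1,\theta)]\,d\theta$. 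Hence the display is equivalent to
\[
\int[\tilde f(t_2,\theta)-\tilde f(t_1,\theta)]\,w(t_2,\theta)\,d\theta\le\int_{t_1}^{t_2}\int\partial_s\tilde f(s,\theta)\,w(s,\theta)\,d\theta\,ds ,
\]
where the right side is what Lemma \ref{lem generalized divergence theorem for geodesic ball} makes of the divergence integral. The left side minus the right side is $\int_{t_1}^{t_2}\int\partial_s\tilde f(s,\theta)[w(t_2,\theta)-w(s,\theta)]\,d\theta\,ds$. The bracket is $\le 0$, but $\partial_s\tilde f$ has no sign. For example, take a radial nonnegative superharmonic $f$ strictly decreasing in $t$ on a rotationally symmetric metric where $t^{1-n}J$ strictly decreases on $(t_1,t_2)$; then this quantity is strictly positive and the inequality fails. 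Your fallback has the same mismatch: the weight is frozen at the outer radius, while the divergence identity uses the weight at the running radius. Moreover, (\ref{ineq about F}) and (\ref{ineq about F-other side}) have already discarded the defect terms, so integrating them cannot exhibit those terms. The obstacle you name is also not the real one: $F$ is constant on $(0,r]$, so its regularity is not at issue.

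The paper avoids this by working at a fixed radius. For $t\in(0,r]$, the quotient $(F(t)-F(t-s))/s$ vanishes identically and splits exactly into $(I)+(II)+(III)$. By dominated convergence and Lemma \ref{lem generalized divergence theorem for geodesic ball}, $(I)\to t^{1-n}\int_{B_q(t)}\Delta f\le 0$, while $(II),(III)\le 0$, so all three tend to $0$. Fatou applied to $(II)$ gives $\partial_t(t^{1-n}J)=0$, and $(III)$ together with its right-sided analogue controls $\mathcal{H}^{n-1}(\tilde\Sigma(t))$.

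Your own first idea (integrate the lemma in $t$) also works, provided you integrate by parts against $w$ instead of freezing $\tilde f$. For each $\theta$, $w(\cdot,\theta)$ is non-increasing and left-continuous, by Lemma \ref{lem derivative of J} plus the drop to $0$ after $\mathcal{R}(\theta)$. The Lebesgue--Stieltjes product rule then gives the exact identity
\[
F(t_2)-F(t_1)=\int_{t_1}^{t_2}s^{1-n}\int_{B_q(s)}\Delta f\,ds-\int_{\mathbb{S}^{n-1}}\int_{[t_1,t_2)}\tilde f(s,\theta)\,|d_sw(s,\theta)|\,d\theta ,
\]
with $\tilde f$ evaluated at the running radius. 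Take $t_2=r$ and $t_1\to 0$. Using $F(r)=n\omega_nf(q)=\lim_{t\to0}F(t)$ and $\tilde f\ge c>0$ on $(0,r]\times\mathbb{S}^{n-1}$, both nonpositive terms must vanish. This gives $w(r,\theta)=1$ for a.e.\ $\theta$, i.e.\ $\mathcal{R}(\theta)\ge r$ and $J(t,\theta)=t^{n-1}$ on $(0,r]$ for a.e.\ $\theta$.

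From there your remaining steps are correct. The open set $\{\mathcal{R}<r\}$ is null, hence empty, and your Riccati equality argument finishes the proof. This ending differs from the paper's, which uses area counting and the rigidity case of Bishop--Gromov; yours has the advantage of exhibiting $\exp_q$ itself as the isometry.
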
   

\pf
{\textbf{Step (1)}. Suppose there is $f(x_0)= 0$ for some $x_0\in B_q(r)$, from $\Delta f\le 0, f\geq 0$ and the Strong Maximum Principle, we get that $f\equiv 0$, which is a contradiction.

Hence we have 
\begin{align}
f(x)> 0, \quad \quad \forall x\in B_q(r). \label{strict positive for f}
\end{align}

    Then    
    \[
    \lim_{t \to 0^+} F(t) = n \omega_n f(x) = r^{1-n} \int_{\partial B_x(r)} f \le F(r) \leq \lim_{t \to 0^+} F(t)
    \]
    hence
    \(
    F(t) \equiv n \omega_n f(x),  \forall \ 0 < t \leq r
    \). Assume \( r > t > s > 0 \) in the following argument, since
    \begin{align*}
    0&= \limsup_{s\to 0}\frac{F(t) - F(t-s)}{s} \\
      &\le  \limsup_{s\to 0}\frac{1}{s}\int_{{\tilde{\Sigma}}(t)} \left( f(\exp_x(t\theta)) \, t^{1-n} J(t, \theta)  - f(\exp_x((t-s)\theta)) \, t^{1-n} J(t, \theta) \right) d\theta \\
      &\quad + \limsup_{s\to 0}\frac{1}{s}\int_{{\tilde{\Sigma}}(t)} \left( f(\exp_x((t-s)\theta)) \, t^{1-n} J(t, \theta) - f(\exp_x((t-s)\theta)) (t-s)^{1-n} J(t-s, \theta) \right) d\theta \\
      &\quad + \limsup_{s\to 0}\frac{1}{s}\int_{S^{n-1}} (t-s)^{1-n} f(\exp_x((t-s)\theta)) J(t-s, \theta) (1_{{\tilde{\Sigma}}(t)}-1_{{\tilde{\Sigma}}(t-s)} )d\theta\\
      &:=(I)+(II)+(III).
    \end{align*} 

    Note the limit $(I)$ exists, and $(II)$ and $(III)$ are nonpositive, hence $(I)\ge 0$. Since by (\ref{eq derivative of f negative})
    \begin{align*}
    0 &\geq \int_{{\tilde{\Sigma}}(t)} t^{1-n} J(t, \theta) \frac{\partial f(\exp_x(t\theta))}{\partial t}  d\theta=(I)\ge 0.
    \end{align*}
    then we get $(I)= 0$, hence $(II)=(III)=0$. Since $f>0$, there exists $C_1>0$ such that  $0<C_1 <f$ uniformly on compact set $\overline{B_x(r)}$, we get 
    \begin{align*}
        0=& \limsup_{s\to 0}\frac{1}{s}\int_{{\tilde{\Sigma}}(t)} \left( f(\exp_x((t-s)\theta)) \, t^{1-n} J(t, \theta) - f(\exp_x((t-s)\theta)) (t-s)^{1-n} J(t-s, \theta) \right) d\theta \\
        \le &\limsup_{s\to 0}C_1\int_{{\tilde{\Sigma}}(t)} \frac{1}{s}(t^{1-n} J(t, \theta)-(t-s)^{1-n} J(t-s, \theta)) d\theta \\
        \le &0.
    \end{align*}
    then there exists subsequence $s_i\to 0$ such that 
    \begin{equation*}
        \lim_{i\to \infty}\frac{1}{s_i}(t^{1-n} J(t, \theta)-(t-s_i)^{1-n} J(t-s_i, \theta))=0, \quad \mathcal{H}^{n-1}-a.e. \ \mathrm{in} \ \tilde{\Sigma}(t).
    \end{equation*}
    which inplies
    \begin{equation*}
         \frac{\partial (t^{1-n}J(t,\theta))}{\partial t}=0,\quad \forall (t,\theta)\in A
    \end{equation*}
     by continuity. Hence $J(t,\theta)=t^{n-1}$ by $\lim\limits_{t\to 0} t^{1-n}J(t,\theta)=1$ on $\overline{A}$ by continuity again. 
     
\textbf{Step (2)}.      Then the limit
     \begin{equation*}
        \lim_{s\to 0}\frac{1}{s}\int_{{\tilde{\Sigma}}(t)} \left( f(\exp_x((t-s)\theta)) \, t^{1-n} J(t, \theta) - f(\exp_x((t-s)\theta)) (t-s)^{1-n} J(t-s, \theta) \right) d\theta=0,
     \end{equation*}
     Since $0= \lim_{s\to 0}\frac{F(t) - F(t-s)}{s}$, we get the existence of following limit,
     \begin{align*}
       0&= \lim_{s\to 0}\frac{1}{s}\int_{S^{n-1}} (t-s)^{1-n} f(\exp_x((t-s)\theta)) J(t-s, \theta) (1_{{\tilde{\Sigma}}(t)}-1_{{\tilde{\Sigma}}(t-s)} )d\theta\\
        &=\lim_{s\to 0}\frac{1}{s}\int_{S^{n-1}}  f(\exp_x((t-s)\theta))  (1_{{\tilde{\Sigma}}(t)}-1_{{\tilde{\Sigma}}(t-s)} )d\theta\\
        &\le \lim_{s\to 0}\frac{C_1}{s}\int_{S^{n-1}}(1_{{\tilde{\Sigma}}(t)}-1_{{\tilde{\Sigma}}(t-s)})d\theta\\
        &\le 0,
     \end{align*}
     which implies for any $t\in (0,r)$, any $\epsilon>0$, there exists $\delta_t>0$, such that for any $s\in (t-\delta_t,t)$, there is 
     \begin{equation*}
        \mathrm{Area}(\tilde{\Sigma}(t-s))\ge \mathrm{Area}(\tilde{\Sigma}(t))-\epsilon s.
     \end{equation*}
     Similarly, for any $s\in (t,t+\delta_t)$, there is 
     \begin{equation*}
        \mathrm{Area}(\tilde{\Sigma}(t+s))\ge \mathrm{Area}(\tilde{\Sigma}(t))-\epsilon s.
     \end{equation*}

     Then suppose $\tilde{r}\in (0,r)$, for any $\epsilon>0$, by the compactness of $[0,\tilde{r}]$,
      we can choose $\delta >0$, and a division $0=r_0<r_1<\cdots <r_N=\tilde{r}$, such that $r_{i+1}-r_i=\delta$, 
    $\Sigma(r_1)=S^{n-1}$,  then 
    \begin{equation*}
        \mathrm{Area}(\tilde{\Sigma}(r_i+s))\ge \mathrm{Area}(\tilde{\Sigma}(r_i))-\epsilon \delta, \quad \forall s\in [r_{i},r_{i+1}],1\le i\le N-1.
    \end{equation*} 
    then we have 
     \begin{align*}
        \mathrm{Vol}(B_x(\tilde{r}))&=\int_{0}^{\tilde{r}}\int_{\tilde{\Sigma}(t)}J(t,\theta)d\theta dt=\int_{0}^{\tilde{r}}\int_{\tilde{\Sigma}(t)}t^{n-1}d\theta dt\\
        &=\sum_{i=1}^{N-1}\int_{r_i}^{r_{i+1}}\int_{\tilde{\Sigma}(t)}t^{n-1} d\theta dt+\int_0^{r_1}\int_{S^{n-1}}t^{n-1}d\theta dt\\
        &\ge\sum_{i=1}^{N-1}\int_{r_i}^{r_{i+1}}t^{n-1} (n\omega_n -\delta\epsilon i)dt+ \int_0^{r_1}\int_{S^{n-1}}t^{n-1}d\theta dt\\
        &\ge \omega_n\tilde{r}^n- C(n,r)\epsilon \delta^2 \frac{(N-1)N}{2}\\
        &= \omega_n\tilde{r}^n- C(n,r)\epsilon \left(\frac{ \tilde{r}}{N}\right)^2 \frac{(N-1)N}{2}\\
        &\ge \omega_n\tilde{r}^n- C(n,r)\epsilon.
     \end{align*}
        Let $\epsilon\to 0$, we get $\mathrm{Vol}(B_x(\tilde{r}))\ge \omega_n \tilde{r}^n$, by Gromov-Bishop volume comparison, 
     $\mathrm{Vol}(B_x(\tilde{r}))= \omega_n \tilde{r}^n$, then let $\tilde{r}\to r$, we get 
     \begin{equation*}
        \mathrm{Vol}(B_x(r))= \omega_n r^n,
     \end{equation*}
     by rigidity part of Gromov-Bishop volume comparison, we get $B_x(r)$ is isometric to $B^n(r)$.
}
\qed

\section*{Part II. Integral of curvature}

In the rest of the paper, we use $b$ instead of $b_q$ for simplicity when the context is clear.

The following Lemma was essentially proved in \cite{CM-AJM} firstly, which used Gromov-Hausdorff convergence. Our statement followed from the intrinsic argument in \cite{LTW}.

\begin{lemma}\label{lem point-wise bound of b}
{If $M^n$ has $Rc\geq 0$ with $n\geq 3$ and maximal volume growth, for any $\delta \in (0, \frac{1}{2}]$, we have 
\begin{align}
\big(\mathrm{V}_M\big)^{\frac{1}{n- 2}}\Big(1+ \tau\Big)^{\frac{1}{2- n}}\rho(x) \leq b(x)\leq \big(\mathrm{V}_M\big)^{\frac{1}{n- 2}}\Big(1- \tau\Big)^{\frac{1}{2- n}}\rho(x) \nonumber 
\end{align}
where $\rho(x)= d(q, x)$ and
\begin{align}
&\theta_q(r)= \frac{\mathrm{Vol}(\partial B_r(q))}{r^{n- 1}}, \quad \quad \theta_\infty= \lim\limits_{r\rightarrow \infty}\theta_q(r), \nonumber \\
&\tau= C(n)\big[\delta+ (\theta_q(\delta \rho(x))- \theta_\infty)^{\frac{1}{n- 1}}\big]. \nonumber 
\end{align}
Especially, $\lim\limits_{\rho(x)\rightarrow \infty} \frac{b(x)}{\rho(x)}= \big(\mathrm{V}_M\big)^{\frac{1}{n- 2}}$.
}
\end{lemma}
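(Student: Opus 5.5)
The plan follows the intrinsic approach of Li--Tam--Wang: compare the minimal Green function $G(q,x)$ with the radial quantity $\int_{\rho(x)}^\infty \frac{t}{\mathrm{Vol}(B_q(t))}dt$. Under maximal volume growth this integral behaves like $\frac{\rho^{2-n}}{(n-2)\,\omega_n\mathrm{V}_M}$ up to a $(1\pm\tau)$ factor. After the normalization $b=G^{1/(2-n)}$, together with the normalization of $G$ implicit in (\ref{est of gradient of b at 0}), this gives exactly the stated two-sided bound $b\approx (\mathrm{V}_M)^{1/(n-2)}\rho$. The argument splits into an upper and a lower bound for $G$ near infinity, each with error controlled by $\delta$ and by the defect $\theta_q(\delta\rho)-\theta_\infty$.

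\textbf{Step 1 (upper and lower barriers via volume comparison).} Since $Rc\ge0$, Bishop--Gromov gives that $\theta_q(r)$ is non-increasing with limit $\theta_\infty=n\omega_n\mathrm{V}_M$. For $t\ge\delta\rho(x)$ one then has
\[
\theta_\infty t^{n-1}\le \mathrm{Vol}(\partial B_q(t))\le \theta_q(\delta\rho)\,t^{n-1},
\]
so on the annular range all spheres are almost Euclidean-cone-like. I will build radial comparison functions $\phi(\rho)=\int_\rho^\infty \frac{dt}{\mathrm{Vol}(\partial B_q(t))}$, understood in the distributional or Lipschitz sense, using the generalized divergence theorem (Lemma~\ref{lem generalized divergence theorem for geodesic ball}) and the Laplacian comparison $\Delta\rho\le\frac{n-1}{\rho}$. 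Applying the maximum principle to $G$ against such barriers on $B_q(R)\setminus B_q(\delta\rho)$, and letting $R\to\infty$ using the minimality of $G$, bounds $G(q,x)$ above and below by constants times $\phi$.

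\textbf{Step 2 (sharpening one side using the mean value identity).} One direction comes directly from Laplacian comparison: $G\ge\frac{\rho^{2-n}}{(n-2)\theta_\infty}(1-\tau)$-type bounds, equivalently $b\le\cdots$. For the other direction I will use the identity (\ref{slice weighted volume}), $\int_{b=r}|\nabla b|=n\omega_n r^{n-1}$, equivalently $\int_{G=s}|\nabla G|=1$, combined with the gradient estimate $|\nabla\log G|\le C/\rho$ (Cheng--Yau). These give an averaged control of $G$ on the sphere $\partial B_q(\rho)$. The small-defect assumption then has to be upgraded from an averaged statement to a pointwise one. Here I would use the Harnack inequality on annuli together with the almost-rigidity quantitative volume cone estimate, controlling the oscillation of $G\rho^{n-2}$ on $\partial B_q(\rho)$ by $C(n)(\theta_q(\delta\rho)-\theta_\infty)^{1/(n-1)}$. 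The $\delta$ term accounts for the contribution of the inner region $B_q(\delta\rho)$, where no almost-rigidity is available.

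\textbf{Step 3 (conclusion).} Taking the power $\frac{1}{2-n}$ of the two-sided bound on $G$ yields the displayed inequality. As $\rho(x)\to\infty$, first send $\theta_q(\delta\rho)\to\theta_\infty$ and then $\delta\to0$; this gives $b/\rho\to(\mathrm{V}_M)^{1/(n-2)}$.

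The main obstacle is Step 2, namely converting the integral information (volume defect, sphere-averaged flux) into a pointwise bound with the explicit exponent $\frac{1}{n-1}$. The first thing to try is Li--Tam--Wang's argument, which integrates the gradient of $G$ along geodesics and estimates the measure of the directions in which $J(t,\theta)/t^{n-1}$ drops, using Lemma~\ref{lem derivative of J}.
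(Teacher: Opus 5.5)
\textbf{There is a genuine gap.} The paper does not prove this lemma. It quotes it from Colding--Minicozzi, who use Gromov--Hausdorff limits, and from the intrinsic argument of Li--Tam--Wang. Your proposal does not give a working replacement, because nothing in it produces the constant $\theta_\infty=n\omega_n\mathrm{V}_M$, and the concrete mechanisms you name fail.

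\textbf{Step~1.} The function $\phi(\rho)=\int_\rho^\infty A(t)^{-1}dt$, with $A(t)=\mathrm{Vol}(\partial B_q(t))$, is not a barrier. Indeed $\Delta\phi(\rho)=A(\rho)^{-1}\bigl(A'(\rho)/A(\rho)-\Delta\rho\bigr)$, and $A'/A$ is at most the average over the sphere of the mean curvature $\Delta\rho$. So $\Delta\phi$ has no sign and the maximum principle cannot be applied. The Li--Yau comparison $G\approx\int_\rho^\infty t\,\mathrm{Vol}(B_q(t))^{-1}dt$ holds only up to non-sharp multiplicative constants, which is useless for a $1\pm\tau$ bound.

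\textbf{Step~2, the ``direct'' direction.} Laplacian comparison makes $\rho^{2-n}$ subharmonic away from $q$. The maximum principle then gives $G\ge\frac{\rho^{2-n}}{(n-2)n\omega_n}$, i.e.\ $b\le\rho$. This carries the constant $n\omega_n$ seen at the pole, not $\theta_\infty$. The bound you need is strictly stronger whenever $\mathrm{V}_M<1$, so it does not come directly from Laplacian comparison.

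\textbf{Step~2, the other direction.} This is asserted rather than proved:
\begin{itemize}
\item The flux identity (\ref{slice weighted volume}) holds on every nonparabolic manifold and carries no information about $\theta_\infty$. It also lives on level sets of $b$, not on $\partial B_q(\rho)$.
\item Harnack and Cheng--Yau control oscillation only up to multiplicative constants.
\item Cheeger--Colding's ``volume cone implies metric cone'' is qualitative. At best it recovers the limit $b/\rho\to(\mathrm{V}_M)^{1/(n-2)}$ (the Colding--Minicozzi route), not the rate $(\theta_q(\delta\rho)-\theta_\infty)^{\frac{1}{n-1}}$.
\end{itemize}

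\textbf{The missing idea} is to keep the measure that Laplacian comparison throws away. Normalize $G$ as in the paper, so $G\sim\rho^{2-n}$ at $q$. Then
\[
\mu:=\Delta(\rho^{2-n})=(n-2)\rho^{1-n}\bigl(\tfrac{n-1}{\rho}-\Delta\rho\bigr)
\]
is a nonnegative Radon measure on $M\setminus\{q\}$. Computing the flux of $\nabla\rho^{2-n}$ through distance spheres gives $\mu\bigl(B_q(t)\setminus B_q(s)\bigr)=(n-2)\bigl(\theta_q(s)-\theta_q(t)\bigr)$, so the total mass is $(n-2)(n\omega_n-\theta_\infty)$. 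Comparing with Dirichlet Green functions on $B_q(R)$ and letting $R\to\infty$,
\[
G(q,x)=\rho(x)^{2-n}+\frac{1}{(n-2)n\omega_n}\int_M G(x,y)\,d\mu(y).
\]
Split the integral into three pieces.
\begin{itemize}
\item On $B_q(\delta\rho(x))$, Cheng--Yau gives $G(x,y)=(1+O(\delta))G(x,q)$. This part carries mass $(n-2)(n\omega_n-\theta_q(\delta\rho))$. Moving it to the left replaces $G(q,x)$ by $G(q,x)\bigl(\theta_q(\delta\rho)/(n\omega_n)+O(\delta)\bigr)$, and this is exactly where $\mathrm{V}_M$ enters.
\item On $B_x(s)$, use $\mu(B_x(s))\le C\rho^{1-n}s^{n-1}$ (divergence theorem for $\nabla\rho$ on $B_x(s)$) together with $G(x,y)\le C\,d(x,y)^{2-n}$. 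The relative error is of order $s/\rho$.
\item Everywhere else the mass is at most $(n-2)(\theta_q(\delta\rho)-\theta_\infty)$ and $G\le Cs^{2-n}$. The relative error is of order $(\rho/s)^{n-2}(\theta_q(\delta\rho)-\theta_\infty)$.
\end{itemize}
The last two errors balance at $s=\rho\,(\theta_q(\delta\rho)-\theta_\infty)^{\frac{1}{n-1}}$, which is the source of the exponent in $\tau$. Without an ingredient of this kind, your Steps 1--2 cannot reach the sharp constant or the stated rate.
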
\qed

\begin{lemma}\label{lem upper bound of gradient of b}
{If $M^n$ has $Rc\geq 0$ with $n\geq 3$, and it is non-parabolic, then $|\nabla b|\leq 1$.
}
\end{lemma}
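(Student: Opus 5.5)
This is the Cheng--Yau type gradient estimate for the Green function. Specifically, it is the sharp estimate $|\nabla G|\le (n-2)\, G^{\frac{n-1}{n-2}}$, going back to Colding, with the proof in the intrinsic style of Colding--Minicozzi. My plan is the Bochner-formula/maximum-principle argument applied to $|\nabla b|^2$.

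From (\ref{equation of Laplace of v}), $b$ satisfies $\Delta b^2 = 2n|\nabla b|^2$. Equivalently, $L u := \Delta u + 2(n-2)\,b^{-1}\langle\nabla b,\nabla u\rangle$ is the drift Laplacian for which $|\nabla b|^2$ should be a subsolution.

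\textbf{Step 1 (Bochner identity).} I will use Bochner's formula and $Rc\ge 0$ to get
\[
\tfrac12\Delta|\nabla b|^2\ge |\nabla^2 b|^2+\langle\nabla b,\nabla\Delta b\rangle .
\]
Substituting $\Delta b=(n-1)|\nabla b|^2/b$ and using the refined Kato/trace inequality $|\nabla^2 b|^2\ge \frac{(\Delta b)^2}{n}+\dots$ (in the form used by Colding for $b$), I aim at a differential inequality. The target is
\[
\Delta |\nabla b|^2 + (2n-4)\, b^{-1}\langle \nabla b, \nabla |\nabla b|^2\rangle \;\ge\; 0 \quad\text{wherever } \nabla b\neq 0.
\]
The cleanest route is Colding's computation: $\Delta(|\nabla b|^2 b^{2-n})$ expressed as a divergence plus $2b^{2-n}\big(|\nabla^2 b - \frac{\Delta b}{n}g|^2 + Rc(\nabla b,\nabla b)\big)$. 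The key identity is that the drift operator $L$ is $b^{n-2}$-weighted divergence-form, namely $L u = b^{2-2n}\,\mathrm{div}(b^{2n-2}\nabla u)$ up to the correct exponent.

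\textbf{Step 2 (boundary behavior).} Near $q$, (\ref{est of gradient of b at 0}) gives $|\nabla b|\to 1$. At infinity I need $\limsup |\nabla b|\le 1$, or at least boundedness of $|\nabla b|$ along an exhaustion. For this I would use the Cheng--Yau gradient estimate for the positive harmonic function $G$ on balls $B_x(\rho(x)/2)$: $|\nabla \log G|\le C/\rho$. This gives $|\nabla b| = \frac{1}{n-2}\, b\,|\nabla\log G| \le C\, b/\rho$. Combined with the Li--Yau bounds on $G$ (which give $b\le C\rho$ in the non-parabolic case via $G(q,x)\ge c\int_\rho^\infty \frac{t}{V(t)}dt$), this makes $|\nabla b|$ bounded but not a priori $\le1$.

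\textbf{Step 3 (maximum principle on sublevel sets).} I apply the maximum principle to $|\nabla b|^2$ (subsolution of $L$) on the annular regions $\{\epsilon\le b\le R\}$. Because the bound at infinity from Step 2 is only $C$, I would instead run a weighted version: consider $w=|\nabla b|^2 - 1$ and show $\sup_{b=R} w$ is nonincreasing in $R$ along a sequence. Alternatively, and probably better, I would use the monotonicity of $\int_{b=r}|\nabla b|^3$-type quantities (Colding's $A(r)=r^{1-n}\int_{b=r}|\nabla b|^3$ is nonincreasing) together with the pointwise subsolution property to push the supremum down to $\lim_{r\to0}$, which equals $1$.

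\textbf{Main obstacle.} Step 3, the control at infinity, is the main obstacle. A bounded subsolution of $L$ need not attain its sup, so I expect to need a barrier. I would try $\phi = |\nabla b|^2 - \eta\, b^{2}$ type modifications, or the weighted integral maximum principle (Moser iteration for $L$-subsolutions, as in \cite{LTW}), to conclude $\sup|\nabla b|^2\le \lim_{b\to0}|\nabla b|^2 = 1$.
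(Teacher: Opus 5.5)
Your Step 2 is correct: Cheng--Yau on $B_x(\rho(x)/2)$, the Li--Yau lower bound for $G$ and Bishop--Gromov give $|\nabla b|\le C$. But the proof is not closed. Step 3, which you flag as the main obstacle, is the whole difficulty, and none of the options you list settles it.

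Step 1 also has a sign error, and that error would block the natural fix. Take $a=\nabla^2 b(\vec n,\vec n)$ and use $|\nabla^2 b|^2\ge a^2+\frac{(\Delta b-a)^2}{n-1}$, $\langle\nabla b,\nabla|\nabla b|^2\rangle=2a|\nabla b|^2$ and $\Delta b=(n-1)|\nabla b|^2/b$. Bochner with $Rc\ge 0$ then gives
\begin{align*}
\Delta|\nabla b|^2-\frac{2(n-2)}{b}\big\langle\nabla b,\nabla|\nabla b|^2\big\rangle\ \ge\ \frac{2n}{n-1}\,a^2\ \ge\ 0,
\quad\text{i.e.}\quad
\mathrm{div}\big(b^{4-2n}\nabla|\nabla b|^2\big)\ge 0 .
\end{align*}
So the drift enters with a minus sign, and the weight is $G^2=b^{4-2n}$. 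This is the computation behind (\ref{non-negative 2nd integrand}), done for $|\nabla b|^2$ instead of $|\nabla b|$. With your sign, every power $b^\alpha$ with $\alpha>0$ is a subsolution, so no barrier of the kind you look for can exist.

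The missing idea is the barrier $b^{n-2}=G^{-1}$. Since $b^{4-2n}\nabla b^{n-2}=(n-2)b^{1-n}\nabla b=-\nabla G$, we get $\mathrm{div}(b^{4-2n}\nabla b^{n-2})=-\Delta G=0$ on $M^n\setminus\{q\}$. Moreover $b^{n-2}\to\infty$ at infinity, because $G(q,x)\le C\int_{\rho(x)}^\infty \frac{t}{V_q(t)}\,dt\to 0$ by Li--Yau and non-parabolicity; this also makes the sets $\{\delta\le b\le R\}$ compact.

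For $\epsilon>0$ put $w=|\nabla b|^2-1-\epsilon b^{n-2}$. Then:
\begin{itemize}
\item $\mathrm{div}(b^{4-2n}\nabla w)\ge 0$ on $M^n\setminus\{q\}$;
\item $\limsup_{\delta\to 0}\sup_{b=\delta}w\le 0$ by (\ref{est of gradient of b at 0});
\item $\sup_{b=R}w\le C^2-1-\epsilon R^{n-2}<0$ for $R$ large, by your Step 2. Any growth $o(b^{n-2})$ of $|\nabla b|^2$ would suffice here.
\end{itemize}
Apply the maximum principle on $\{\delta\le b\le R\}$, then let $R\to\infty$, $\delta\to 0$ and $\epsilon\to 0$; this gives $|\nabla b|\le 1$.

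Your alternatives do not work. The $b^2$ modification fails exactly when $n=3$, since $\mathrm{div}(b^{4-2n}\nabla b^2)=2(4-n)b^{4-2n}|\nabla b|^2$. Moser iteration and the monotonicity of level-set integrals control only averages or lose a constant, so they cannot by themselves give the sharp pointwise bound $1$.

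The paper does not prove this lemma itself; it cites Colding's sharp gradient estimate \cite[Theorem 3.1]{Colding} together with \cite[Lemma 2.2]{Xu-IOC1}. With the sign corrected and the $G^{-1}$ barrier added, your outline would become a self-contained proof of the cited result.
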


\pf
{The conclusion follows from \cite[Lemma $2.2$]{Xu-IOC1} and \cite[Theorem $3.1$]{Colding}.
} 
\qed

The following lemma was implied by the argument in \cite{CC-Ann}, and was used repeatedly in \cite{CM-AJM}, \cite{Colding} and \cite{CM}. We give a direct proof of this result here for reader's convenience.
\begin{cor}\label{cor limit of the rescaled volume by b}
{For a complete non-compact Riemannian manifold $M^n$, with $Rc\geq 0$, $n\ge 3$ and maximal volume growth, we have $\lim\limits_{r\rightarrow \infty}\frac{\int_{b\leq r}|\nabla b|^3}{r^n}= \big(\mathrm{V}_M\big)^{\frac{1}{n- 2}}\omega_n$.
}
\end{cor}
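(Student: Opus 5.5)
The plan is to show that $|\nabla b|$ is asymptotically close, in an averaged $L^2$ sense on $\{b\le r\}$, to the constant $c:=(\mathrm{V}_M)^{\frac{1}{n-2}}$. Since $|\nabla b|\le 1$ (Lemma \ref{lem upper bound of gradient of b}), this upgrades to $L^3$ closeness, so the integral of $|\nabla b|^3$ behaves like $c^3\,\mathrm{Vol}(\{b\le r\})$. Lemma \ref{lem point-wise bound of b} gives $b/\rho\to c$ as $\rho\to\infty$, so $\{b\le r\}$ is squeezed between $B_q((1-o(1))r/c)$ and $B_q((1+o(1))r/c)$ up to a compact set. Hence
\[
\mathrm{Vol}(\{b\le r\})=\mathrm{V}_M\,\omega_n\,(r/c)^n(1+o(1))=c^{-2}\omega_n r^n(1+o(1)),
\]
and the target is $c^3\cdot c^{-2}\omega_n r^n=c\,\omega_n r^n$, consistent with the statement.

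\textbf{Exact identities from the equation of $b$.} From (\ref{equation of Laplace of v}) we have $\Delta(b^2)=2n|\nabla b|^2$. By (\ref{slice weighted volume}), $\int_{b=t}|\nabla b|=n\omega_n t^{n-1}$ for every $t$. Via the coarea formula these give
\[
\int_{b\le r}|\nabla b|^2=\omega_n r^n,\qquad \int_{b\le r} b^2|\nabla b|^2=\frac{n\omega_n}{n+2}\,r^{n+2}.
\]

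\textbf{Cross term.} To control $\int_{b\le r}b^2\langle\nabla b,c\nabla\rho\rangle$, I integrate $\langle\nabla(b^2),\nabla(\rho^2)\rangle$ by parts over $\{b\le r\}$. Moving the derivative onto $b^2$ gives
\[
-\int_{b\le r}\rho^2\,\Delta(b^2)+\int_{b=r}\rho^2\cdot 2r|\nabla b|
=-2n\int_{b\le r}\rho^2|\nabla b|^2+2r\int_{b=r}\rho^2|\nabla b|.
\]
Here $\rho^2$ is only Lipschitz, but that suffices for this integration by parts. Regular values of $b$ are used for the boundary, and a cutoff near $q$ contributes nothing in the limit. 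Replacing $\rho$ by $b/c$ with relative error $o(1)$ as $\rho\to\infty$ (the compact part is $O(1)$, negligible after dividing by $r^{n+2}$), both terms are computable from the identities above:
\[
\int_{b\le r}\langle\nabla b^2,\nabla\rho^2\rangle=c^{-2}\Big(2n\omega_n r^{n+2}-\frac{2n^2\omega_n}{n+2}r^{n+2}\Big)+o(r^{n+2})=\frac{4n\omega_n}{(n+2)c^2}r^{n+2}+o(r^{n+2}).
\]
Since $\langle\nabla b^2,\nabla\rho^2\rangle=4b\rho\langle\nabla b,\nabla\rho\rangle$, using $\rho=b/c\,(1+o(1))$ and $|\langle\nabla b,\nabla\rho\rangle|\le1$ yields
\[
c\int_{b\le r}b^2\langle\nabla b,\nabla\rho\rangle=\frac{n\omega_n}{n+2}r^{n+2}+o(r^{n+2}).
\]
Finally, $c^2\int_{b\le r}b^2\approx c^4\int_{B_q(r/c)}\rho^2\approx c^4\mathrm{V}_M\,\frac{n\omega_n}{n+2}(r/c)^{n+2}=\frac{n\omega_n}{n+2}r^{n+2}$, using $c^{n-2}=\mathrm{V}_M$ and the volume asymptotics of spheres.

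\textbf{Conclusion.} Expanding the square and using the three computed terms,
\[
\int_{b\le r}b^2|\nabla b-c\nabla\rho|^2=o(r^{n+2}).
\]
Because $b\ge r/2$ on $\{r/2\le b\le r\}$, this gives $\int_{r/2\le b\le r}\big||\nabla b|-c\big|^2=o(r^n)$. Summing over dyadic shells, and using that $|\nabla b|\le1$ makes $\big||\nabla b|^3-c^3\big|\le 3\big||\nabla b|-c\big|$, we get $\int_{b\le r}|\nabla b|^3=c^3\mathrm{Vol}(\{b\le r\})+o(r^n)=c\,\omega_n r^n+o(r^n)$, which is the claim.

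The main obstacle is the cross-term step: making the integration by parts rigorous with the Lipschitz function $\rho^2$ and the possibly irregular level sets of $b$, and checking that every replacement $\rho\leftrightarrow b/c$ produces only $o(r^{n+2})$ errors. For the latter I would use the uniform form of Lemma \ref{lem point-wise bound of b}: choose $\delta$ small first, then $\rho$ large.
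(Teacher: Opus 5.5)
Your proposal is correct and follows essentially the paper's route. You compute the cross term by integrating by parts against the distance function, using the equation for $b$ and the identity $\int_{b=t}|\nabla b|=n\omega_n t^{n-1}$; you replace $\rho$ by $b/c$ via $b/\rho\to c$; you expand the square to get $L^2$-closeness of $\nabla b$ to $c\nabla\rho$; and you finish with $|\nabla b|\le 1$ and Cauchy--Schwarz. The only difference is that you use the quadratic pair $b^2,\rho^2$ (with $\Delta b^2=2n|\nabla b|^2$), which gives a $b^2$-weighted estimate and so needs the extra dyadic step, whereas the paper works with $\hat b=b/c$ and $\rho$ directly and gets the unweighted bound $\int_{\hat b\le r}|\nabla\hat b-\nabla\rho|^2=o(\mathrm{Vol}(\{\hat b\le r\}))$ at once.
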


\pf
{\textbf{Step (1)}. Let $\tilde{r}= \big(\mathrm{V}_M\big)^{\frac{1}{2- n}}r, \hat{b}= \big(\mathrm{V}_M\big)^{\frac{1}{2- n}}b$.
 
Firstly we recall that $\int_{b\leq r}|\nabla b|^2= \omega_nr^n$ by Co-area formula and (\ref{slice weighted volume}), which implies
\begin{align}
\int_{\hat{b}\leq r}|\nabla \hat{b}|^2= \mathrm{V}_M\omega_n r^n \nonumber 
\end{align}

From Green's formula and $\Delta b= (n- 1)\frac{|\nabla b|^2}{b}$, we get
\begin{align}
\int_{\hat{b}\leq r}\nabla \hat{b}\cdot \nabla \rho&= -\int_{\hat{b}\leq r}\Delta \hat{b}\cdot \rho+ \int_{\hat{b}= r}|\nabla \hat{b}|\cdot \rho \nonumber \\
& = -(n- 1)\int_{\hat{b}\leq r}\rho\frac{|\nabla \hat{b}|^2}{\hat{b}}+ \int_{\hat{b}= r}|\nabla \hat{b}|r+ \int_{\hat{b}= r}|\nabla \hat{b}|\cdot (\rho- r). \nonumber 
\end{align}
Using $\int_{b= r}|\nabla b|= n\omega_n r^{n- 1}$, we have
\begin{align}
\int_{\hat{b}= r}|\nabla \hat{b}|r= \mathrm{V}_Mn\omega_n r^{n}. \nonumber 
\end{align}

Note by Lemma \ref{lem point-wise bound of b} and Lemma \ref{lem upper bound of gradient of b}, the following holds
\begin{align}
\lim_{r\rightarrow\infty} \frac{\int_{\hat{b}\leq r}\rho\frac{|\nabla \hat{b}|^2}{\hat{b}}- |\nabla \hat{b}|^2}{V(\hat{b}\leq r)}= 0 \quad \quad \text{and} \quad \quad \lim_{r\rightarrow\infty} \frac{\int_{\hat{b}= r}|\nabla \hat{b}|\cdot (\rho- r)}{V(\hat{b}\leq r)}= 0. \nonumber 
\end{align}

By the above, using Lemma \ref{lem point-wise bound of b} again,
\begin{align}
\lim_{r\rightarrow\infty} \frac{\int_{\hat{b}\leq r}\nabla \hat{b}\cdot \nabla \rho}{V(\hat{b}\leq r)}&= -(n- 1)\lim_{r\rightarrow\infty}\frac{\int_{\hat{b}\leq r}|\nabla \hat{b}|^2}{V(\hat{b}\leq r)}+ \lim_{r\rightarrow\infty}\frac{\mathrm{V}_Mn\omega_nr^n}{V(\hat{b}\leq r)} \nonumber \\
&= \lim_{r\rightarrow\infty}\frac{\mathrm{V}_M\omega_nr^n}{V(\hat{b}\leq r)}= 1. \nonumber
\end{align}

Finally, 
\begin{align}
\lim\limits_{r\rightarrow \infty}\frac{\int_{\hat{b}\leq r} |\nabla \hat{b}- \nabla \rho|^2}{V(\hat{b}\leq r)}&= \lim\limits_{r\rightarrow \infty}\frac{\int_{\hat{b}\leq r} |\nabla \hat{b}|^2}{V(\hat{b}\leq r)}+ \lim\limits_{r\rightarrow \infty}\frac{\int_{\hat{b}\leq r} |\nabla \rho|^2}{V(\hat{b}\leq r)}- 2\lim_{r\rightarrow\infty} \frac{\int_{\hat{b}\leq r}\nabla \hat{b}\cdot \nabla \rho}{V(\hat{b}\leq r)} \nonumber \\
&= \lim\limits_{r\rightarrow \infty}\frac{\mathrm{V}_M\omega_nr^n}{V(\hat{b}\leq r)}+ 1- 2= 0 . \label{volume diff limit is 0} 
\end{align}

\textbf{Step (2)}. Note we have 
\begin{align}
\frac{\int_{b\leq r}\Big||\nabla b|^3- \big(\mathrm{V}_M\big)^{\frac{3}{n- 2}}\Big|}{r^n}&= \big(\mathrm{V}_M\big)^{\frac{3- n}{n- 2}}\frac{\int_{\hat{b}\leq \tilde{r}}\big||\nabla \hat{b}|^3- 1\big|}{\tilde{r}^n}. \nonumber 
\end{align}

From (\ref{volume diff limit is 0}), Lemma \ref{lem upper bound of gradient of b} and Lemma \ref{lem point-wise bound of b}, using the Bishop-Gromov Volume Comparison Theorem, we get
\begin{align}
\varlimsup_{r\rightarrow \infty}\frac{\int_{\hat{b}\leq r}\big||\nabla \hat{b}|^3- 1\big|}{r^n}&\leq C(\mathrm{V}_M)\varlimsup_{r\rightarrow \infty}\frac{\int_{\hat{b}\leq r} \big||\nabla \hat{b}|- 1\big|}{r^n}\leq C\cdot \varlimsup_{r\rightarrow \infty}\frac{\int_{\hat{b}\leq r} \big|\nabla \hat{b}- \nabla \rho\big|}{r^n} \nonumber \\
&\leq C\cdot \varlimsup_{r\rightarrow \infty}\frac{\big(\int_{\hat{b}\leq r} \big|\nabla \hat{b}- \nabla \rho\big|^2\big)^{\frac{1}{2}}}{r^n}\cdot V(\hat{b}\leq r)^{\frac{1}{2}} \nonumber \\
&\leq C\cdot \Big(\varlimsup_{r\rightarrow \infty}\frac{\int_{\hat{b}\leq r} \big|\nabla \hat{b}- \nabla \rho\big|^2}{V(\hat{b}\leq r)}\Big)^{\frac{1}{2}}= 0, \nonumber 
\end{align}
which implies $\lim\limits_{r\rightarrow \infty}\frac{\int_{\hat{b}\leq r}\big||\nabla \hat{b}|^3- 1\big|}{r^n}= 0$. 

Hence from the above and Lemma \ref{lem point-wise bound of b}, we have 
\begin{align}
\lim\limits_{r\rightarrow \infty}\frac{\int_{b\leq r}|\nabla b|^3}{r^n}= \big(\mathrm{V}_M\big)^{\frac{3}{n- 2}}\lim\limits_{r\rightarrow \infty}\frac{V(b\leq r)}{r^n}=  \big(\mathrm{V}_M\big)^{\frac{3- n}{n- 2}}\lim\limits_{r\rightarrow \infty}\frac{V(\rho\leq r)}{r^n}= \big(\mathrm{V}_M\big)^{\frac{1}{n- 2}}\omega_n .\nonumber 
\end{align}
}
\qed

We will repeatedly use the following variant of the Bochner formula, which we state as a lemma for convenience.
\begin{lemma}\label{lem Bochner formula variant}
{For any smooth function $v$, we have 
\begin{equation}
\Delta|\nabla v| = \frac{|\nabla^2 v|^2- |\nabla^2v(\frac{\nabla v}{|\nabla v|})|^2}{|\nabla v|} + \langle \frac{\nabla v}{|\nabla v|}, \nabla(\Delta v) \rangle + \operatorname{Ric}(\frac{\nabla v}{|\nabla v|}, \frac{\nabla v}{|\nabla v|})\cdot |\nabla v|, \label{Lap of norm of grad of v}
\end{equation}
}
\end{lemma}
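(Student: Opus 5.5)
The plan is to derive (\ref{Lap of norm of grad of v}) from the classical Bochner formula
\[
\tfrac12\Delta|\nabla v|^2=|\nabla^2 v|^2+\langle\nabla v,\nabla\Delta v\rangle+\operatorname{Ric}(\nabla v,\nabla v),
\]
working at points where $\nabla v\neq 0$; there $|\nabla v|$ is smooth and the formula makes sense. Write $\phi=|\nabla v|$. Then $\tfrac12\Delta\phi^2=\phi\Delta\phi+|\nabla\phi|^2$, so
\[
\phi\,\Delta\phi=|\nabla^2 v|^2-|\nabla\phi|^2+\langle\nabla v,\nabla\Delta v\rangle+\operatorname{Ric}(\nabla v,\nabla v).
\]

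The key step is to identify $|\nabla\phi|^2$. We compute $\nabla_i\phi=\frac{v_jv_{ij}}{\phi}$, that is, $\nabla\phi=\nabla^2v\bigl(\frac{\nabla v}{|\nabla v|}\bigr)$ as a vector, using the symmetry of the Hessian. Hence $|\nabla\phi|^2=\bigl|\nabla^2v\bigl(\frac{\nabla v}{|\nabla v|}\bigr)\bigr|^2$. Dividing by $\phi$, and using the homogeneity
\[
\operatorname{Ric}(\nabla v,\nabla v)/\phi=\operatorname{Ric}\Bigl(\tfrac{\nabla v}{|\nabla v|},\tfrac{\nabla v}{|\nabla v|}\Bigr)\phi,
\qquad
\langle\nabla v,\nabla\Delta v\rangle/\phi=\Bigl\langle\tfrac{\nabla v}{|\nabla v|},\nabla\Delta v\Bigr\rangle,
\]
gives exactly (\ref{Lap of norm of grad of v}).

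There is no real obstacle. The only point requiring care is that the identity is pointwise on $\{\nabla v\neq 0\}$, since $|\nabla v|$ fails to be smooth where the gradient vanishes. I would state that restriction explicitly; this matches the later uses of the lemma for $v=b$ on regular level sets. One may also remark that, by Kato's inequality, the first term on the right is nonnegative.
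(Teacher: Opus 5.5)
Your proof is correct and follows the paper's argument: both start from the classical Bochner formula for $\frac12\Delta|\nabla v|^2$, use $\Delta|\nabla v| = \frac{1}{2|\nabla v|}\Delta|\nabla v|^2 - \frac{|\nabla|\nabla v||^2}{|\nabla v|}$ together with $\nabla|\nabla v| = \nabla^2 v\big(\frac{\nabla v}{|\nabla v|}\big)$, and then divide by $|\nabla v|$. Your explicit restriction to the set $\{\nabla v\neq 0\}$ is a worthwhile clarification that the paper leaves implicit.
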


\pf
{From the standard Bochner formula for $|\nabla v|$,
\[
\frac{1}{2} \Delta(|\nabla v|^2) = |\nabla^2 v|^2 + \langle \nabla v, \nabla(\Delta v) \rangle + \operatorname{Ric}(\nabla v, \nabla v).
\]

Note we have
\[
\Delta|\nabla v| = \frac{1}{2|\nabla v|} \Delta(|\nabla v|^2) - \frac{|\nabla|\nabla v||^2}{|\nabla v|}.
\]

And from  \
\begin{align*}
    \nabla |\nabla v|^2 = 2\langle \nabla\nabla v,\nabla v\rangle=2\nabla^2v(\nabla v),
\end{align*}
we get $\nabla |\nabla v| = \nabla^2v(\frac{\nabla v}{|\nabla v|})$.

Thus
\begin{align*}
\Delta|\nabla v| &= \frac{1}{|\nabla v|} \left[ |\nabla^2 v|^2 + \langle \nabla v, \nabla(\Delta v) \rangle + \operatorname{Ric}(\nabla v, \nabla v) \right] - \frac{|\nabla^2 v(\frac{\nabla v}{|\nabla v|})|^2}{|\nabla v|} \\
&= \frac{|\nabla^2 v|^2 - |\nabla^2 v(\frac{\nabla v}{|\nabla v|})|^2}{|\nabla v|} + \frac{\langle \nabla v, \nabla(\Delta v) \rangle}{|\nabla v|} + \frac{\operatorname{Ric}(\nabla v, \nabla v)}{|\nabla v|}.
\end{align*}
}
\qed

\section{Weighted volume and its asymptotic behavior}

Define 
\begin{align}
&\mathcal{S}\vcentcolon= \{t\in \mathbb{R}^+: |\nabla b(x)|= 0\ \text{for some}\ x\in b^{-1}(t)\}, \nonumber 
\end{align}
From Sard's theorem we know  $\mathcal{L}^1(\mathcal{S})= 0$.

We define the weighted volume of $\{x\in M^n: b(x)= t\}$ as follows:
\begin{align}
\mathcal{A}(t)\vcentcolon= t^{1- n}\int_{b= t}|\nabla b|^2, \quad \quad \forall t\in \mathbb{R}^+. \nonumber 
\end{align}

We always assume $\vec{n}= \frac{\nabla b}{|\nabla b|}$ in this section.

\begin{definition} 
Let $E \subset M$, and let $v(x)$ be a unit vector in $T_xM$.  
Define $H^+_r(x) = \{ w \in T_x M : |w|<r, v(x) \cdot w \geq 0 \}$, if  
\begin{align*}
    \lim_{r \to 0} \frac{\mathcal{H}^n(E\cap \exp_x(H^+_r(x)))}{r^n} = 0,
\end{align*}
then $v(x)$ is called the \textit{measure theoretic unit outer normal} to $E$ at $x$.
\end{definition}

   \begin{lemma}\label{lem derivative of Vp}
{We have $\mathcal{A}'\in C(\mathbb{R}^+)$ and 
\begin{align}
    \mathcal{A}'(r)&=r^{1-n}\int_{b=r}\langle\nabla |\nabla b|,\vec{n}\rangle, \quad  \forall r> 0,\nonumber
\end{align}
    \begin{align}
\frac{\mathcal{A}'(t)}{t^{n- 3}}- \frac{\mathcal{A}'(s)}{s^{n- 3}}&\geq \int_{s< b< t} \Big\{\frac{\operatorname{Ric}(\vec{n}, \vec{n})+ |\Pi_0|^2+ \frac{|\nabla^2 b(\vec{n}, \vec{n})|^2}{(n- 1)|\nabla b|^2}}{b^{2n- 4}}\Big\}|\nabla b|\geq 0, \quad 0< s< t< \infty. \nonumber 
\end{align}
  }
\end{lemma}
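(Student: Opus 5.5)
The plan is to treat $\mathcal{A}$ exactly as $I(u,q,r)$ was treated in Step (1) of the proof of Lemma \ref{lem mean value formula of harmonic func}, taking $u=|\nabla b|$. Since $|\nabla b|^2=u\,|\nabla b|$, we have $\mathcal{A}(r)=n\omega_n I(|\nabla b|,q,r)$.

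\textbf{Step 1: the formula for $\mathcal{A}'$ at regular values.} For $r\notin\mathcal{S}$ I differentiate along $\frac{\partial}{\partial r}=\frac{\nabla b}{|\nabla b|^2}$. As in (\ref{before simplification}), three terms appear: the factor $(1-n)r^{-n}$, the normal derivative of $u$, and the variation of the area element. The latter equals $\Delta b/|\nabla b|$ against the weight $u$. By (\ref{equation of Laplace of v}) the first and third terms cancel, which gives
\[
\mathcal{A}'(r)=r^{1-n}\int_{b=r}\langle\nabla|\nabla b|,\vec n\rangle .
\]
I keep the boundary form here rather than converting it to $\int_{b\le r}\Delta u$, because $|\nabla b|$ is singular at $q$.

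\textbf{Step 2: continuity of $\mathcal{A}'$ across critical values.} For regular $s<t$, the divergence theorem on $\{s<b<t\}$ gives
\[
\int_{b=t}\langle\nabla|\nabla b|,\vec n\rangle-\int_{b=s}\langle\nabla|\nabla b|,\vec n\rangle=\int_{s<b<t}\Delta|\nabla b| .
\]
To make sense of this near critical points I will replace $|\nabla b|$ by $\sqrt{|\nabla b|^2+\epsilon}$ and let $\epsilon\to0$. Lemma \ref{lem Bochner formula variant} and (\ref{equation of Laplace of v}) show that $\Delta|\nabla b|$ is a locally finite measure away from $q$, and that its possibly singular part is nonnegative. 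This should make $r\mapsto\int_{b=r}\langle\nabla|\nabla b|,\vec n\rangle$ extend continuously to all $r>0$. Since $\mathcal{L}^1(\mathcal{S})=0$ by Sard's theorem, the formula then holds for every $r$ and $\mathcal{A}'\in C(\mathbb{R}^+)$.

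\textbf{Step 3: the weighted inequality.} Note that
\[
\frac{\mathcal{A}'(t)}{t^{n-3}}=\int_{b=t}b^{4-2n}\langle\nabla|\nabla b|,\vec n\rangle .
\]
By the divergence theorem the difference between $t$ and $s$ is
\[
\int_{s<b<t}\Big[b^{4-2n}\Delta|\nabla b|+(4-2n)b^{3-2n}\langle\nabla b,\nabla|\nabla b|\rangle\Big].
\]
I will expand $\Delta|\nabla b|$ with Lemma \ref{lem Bochner formula variant} for $v=b$. The term $\langle\vec n,\nabla\Delta b\rangle$ is computed from (\ref{equation of Laplace of v}) and equals
\[
(n-1)\Big(\frac{2|\nabla b|\langle\vec n,\nabla|\nabla b|\rangle}{b}-\frac{|\nabla b|^3}{b^2}\Big).
\]
For the Hessian term I decompose it relative to the level set:
\[
|\nabla^2 b|^2-|\nabla^2b(\vec n)|^2=|\nabla b|^2|\Pi|^2+|\nabla^T|\nabla b||^2\ \ge\ |\nabla b|^2\Big(|\Pi_0|^2+\frac{H^2}{n-1}\Big).
\]
The mean curvature is determined by (\ref{equation of Laplace of v}):
\[
H|\nabla b|=\Delta b-\nabla^2b(\vec n,\vec n)=(n-1)\frac{|\nabla b|^2}{b}-\nabla^2b(\vec n,\vec n),\qquad \nabla^2b(\vec n,\vec n)=\langle\nabla|\nabla b|,\vec n\rangle .
\]
Substituting these, the cross terms in $\langle\nabla|\nabla b|,\vec n\rangle\,|\nabla b|/b$ and the terms $|\nabla b|^3/b^2$ should cancel exactly. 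The weight exponent $4-2n$ is chosen so that this happens. What remains is
\[
b^{4-2n}\Big(\operatorname{Ric}(\vec n,\vec n)+|\Pi_0|^2+\frac{|\nabla^2b(\vec n,\vec n)|^2}{(n-1)|\nabla b|^2}\Big)|\nabla b|
\]
plus a nonnegative remainder. Nonnegativity of the final right-hand side then follows from $\operatorname{Ric}\ge0$.

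\textbf{Main obstacle.} I expect the main obstacle to be this exact bookkeeping, that is, verifying that the $H^2/(n-1)$ expansion and the gradient-of-$\Delta b$ term combine with the weight derivative into a perfect cancellation rather than leaving a term of indefinite sign. I would first check it in the model case $\mathbb{R}^n$, where $b=\rho$, to fix the coefficients. A secondary difficulty is justifying the divergence theorem at critical levels, for which I would use the $\epsilon$-regularization of Step 2 and pass to the limit by Fatou's lemma on the nonnegative integrand.
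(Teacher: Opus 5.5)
\textbf{The computation is right.} Your Step 1 is correct at regular values. Your Step 3 is the paper's Step (2) in different notation. You eliminate $\Delta b$ through $H|\nabla b|=(n-1)|\nabla b|^2/b-\nabla^2b(\vec n,\vec n)$, while the paper keeps $\Delta b$ and the tangential trace. Either way, the same cancellation leaves $|\nabla^2 b(\vec n,\vec n)|^2/((n-1)|\nabla b|)$.

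\textbf{The gap: passing from regular values to all $r>0$.} This is exactly the first assertion of the lemma. You conclude from $\mathcal{L}^1(\mathcal{S})=0$ that the formula holds for every $r$ and that $\mathcal{A}\in C^1$. That inference is invalid. Differentiability on the open set $\mathbb{R}^+\setminus\mathcal{S}$, together with a continuous extension of the derivative, says nothing about how $\mathcal{A}$ behaves across $\mathcal{S}$. The Cantor function is differentiable with derivative $0$ off a closed null set, yet it is not constant. For a merely smooth metric nothing makes $\mathcal{S}$ countable, so you cannot patch interval by interval either. You have also not shown that $\mathcal{A}$ is continuous at critical values.

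What is missing is the integral identity
\[
\mathcal{A}(t)-\mathcal{A}(s)=\int_s^t r^{1-n}\Phi(r)\,dr,\qquad \Phi(r)=\int_{b=r}\langle\nabla|\nabla b|,\vec n\rangle,
\]
for arbitrary $0<s<t$. It is also what Corollary \ref{cor monotonicity of Vt} uses later. The paper obtains it directly in its Step (1):
\begin{itemize}
\item $\{s<b<t\}$ has finite perimeter, because $\mathcal{H}^{n-1}(b^{-1}(r))<\infty$ and the critical points on a level set are $\mathcal{H}^{n-1}$-null.
\item The field $b^{1-n}|\nabla b|\nabla b$ is Lipschitz, and by $\Delta b=(n-1)|\nabla b|^2/b$ its divergence is exactly $b^{1-n}\langle\nabla|\nabla b|,\nabla b\rangle$.
\item Gauss--Green for finite-perimeter sets plus the coarea formula then give the identity with no regularity condition on $s,t$.
\end{itemize}

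\textbf{A second, related point about continuity of $\Phi$.} What must be continuous is $\Phi$ itself at a critical $r_0$, i.e. the actual value $\int_{b=r_0}\langle\nabla|\nabla b|,\vec n\rangle$. A continuous extension of $\Phi|_{\mathbb{R}^+\setminus\mathcal{S}}$ is not enough. For this, the weighted flux identity
\[
\hat A(t)-\hat A(s)=\int_{s<b<t}\mathrm{div}\big(b^{4-2n}\nabla|\nabla b|\big),\qquad \hat A(r)=\int_{b=r}b^{4-2n}\langle\nabla|\nabla b|,\vec n\rangle,
\]
has to hold with equality also when $s$ or $t$ is a critical value. The paper takes this from an approximation argument it cites from AFM. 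Your $\epsilon$-regularization could serve, but you would need equality in the limit, not only the Fatou inequality; an inequality gives monotonicity of $\hat A$, not continuity. Once equality holds, $\hat A(t)-\hat A(s)$ is the integral of a nonnegative integrable function over $\{s<b<t\}$, hence continuous in $s,t$. The fundamental theorem of calculus applied to the identity above then gives $\mathcal{A}\in C^1$ with $\mathcal{A}'(r)=r^{n-3}\hat A(r)$ for every $r>0$. With that in place your Step 1 is no longer needed, and the weighted inequality holds for all $0<s<t$, not just regular ones.
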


\begin{remark}\label{rem why A'overtn-3}
{The strategy of deriving geometric information by examining the derivative of a primary monotone quantity has deep roots in geometric analysis. 

Classic examples include the monotonicity formula for minimal surfaces, where the derivative of the area ratio controls the second fundamental form, and Almgren's frequency function for harmonic functions, whose derivative measures the deviation from homogeneity. 

In a similar spirit, the analysis of the quantity $\frac{\mathcal{A}'(t)}{t^{n-3}}$ and its monotonicity serves as a higher-order probe into the geometry of the level sets of $b$. 

Its non-decreasing property, which follows directly from the condition $\operatorname{Ric} \ge 0$, packages the combined effects of Ricci curvature, the traceless second fundamental form, and the Hessian of $b$ into a single, powerful differential inequality. This formalism is the engine that ultimately drives the asymptotic integral estimates for the scalar curvature.
}
\end{remark}

\begin{proof}
\textbf{Step (1)}. For any $r> 0$, we define 
\begin{align}
Z(r)\vcentcolon= \{x\in M^n: b(x)= r, |\nabla b|(x)= 0\}. \nonumber 
\end{align}
   By \cite[Theorem $2.2$]{Cheng}, we know 
    \begin{align}
        \mathcal{H}^{n-1}(b^{-1}(r))<\infty, \quad \quad \mathcal{H}^{n- 1}(Z(r))= 0, \quad \quad \forall r> 0. \label{critical point has 0 measure}
    \end{align}

    For $t>s\geq0$, define $D=\{x\in M:s<b(x)<t\}$. Let $\partial_*D$ be defined as \eqref{measure theoretic boundary},

     Therefore we get
     \begin{align}
       & \mathcal{H}^{n-1}(\partial_* D)\leq \mathcal{H}^{n-1}\big(b^{-1}(s)\cup b^{-1}(t)\big)<\infty, \nonumber 
       \end{align}
       which implies $D$ is a set with finite perimeter by \cite[Theorem 5.23]{EG}.

Let $\vec{n}=\frac{\nabla b}{|\nabla b|}$ when $\nabla b\neq 0$ and $\vec{n}=0$ if $\nabla b= 0$. Let $\vec{v}$ be the measure theoretic unit outer normal of $\partial_* D$. Note that $\vec{v}=\vec{n}$ on $b^{-1}(t)\backslash Z(t)$ and $\vec{v}=-\vec{n}$ on $b^{-1}(s)\backslash Z(s)$ from the smoothness. Now by {\cite[Theorem 5.16]{EG}, we get
\begin{align}
\int_{\partial_*D}\langle b^{1-n}|\nabla b|\nabla b, \vec{v}\rangle= \int_{D}\mathrm{div}(b^{1-n}|\nabla b|\nabla b).\label{divergence application for A}
\end{align}

From the definition of $\partial_*D$ and the Implicit Function Theorem, we have 
    \begin{align}
     (b^{-1}(s)\backslash Z(s))\cup (b^{-1}(t)\backslash Z(t)) \subset\partial_* D\subset\partial D= b^{-1}(s)\cup b^{-1}(t), \nonumber
     \end{align}
this implies
       \begin{align}
        \mathcal{H}^{n-1}(b^{-1}(s)\cup b^{-1}(t)\backslash \partial_*D)=0.\label{measure boundary of D}
    \end{align}
   
    For $ t>s\geq 0$, by (\ref{measure boundary of D}) and (\ref{divergence application for A}), we get 
    \begin{align*}
        \mathcal{A}(t)-\mathcal{A}(s) =&\int_{b^{-1}(t)\backslash Z(t)}\langle b^{1-n}|\nabla b|\nabla b, \vec{n}\rangle-\int_{b^{-1}(s)\backslash Z(s)}\langle b^{1-n}|\nabla b|\nabla b, \vec{n}\rangle\\
        =&\int_{\partial_*D}\langle b^{1-n}|\nabla b|\nabla b, \vec{v}\rangle=\int_{D}\mathrm{div}(b^{1-n}|\nabla b|\nabla b)\\
        =&\int_{D}(1-n)b^{-n}|\nabla b|^3+b^{1-n}\langle\nabla |\nabla b|,\nabla b\rangle+b^{1-n}|\nabla b|\Delta b
    \end{align*}
    
    Using $\Delta b=(n-1)\frac{|\nabla b|^2}{b}$ and Co-area formula, we get
    \begin{align}
        \mathcal{A}(t)-\mathcal{A}(s)=&\int_{s< b < t}b^{1-n}\langle\nabla |\nabla b|,\nabla b\rangle
        =\int_s^t\mathrm{d}r\int_{b=r}b^{1-n}\langle\nabla |\nabla b|,\vec{n}\rangle. \label{difference between A volume}
    \end{align}
    
    \textbf{Step (2)}. Now we compute $\mathrm{div}\big(\frac{\nabla|\nabla b|}{b^{2n- 4}}\big)$ as follows:
\begin{align}
\mathrm{div}\big(\frac{\nabla|\nabla b|}{b^{2n- 4}}\big)= \frac{\Delta |\nabla b|}{b^{2n- 4}}- (2n- 4)\frac{\nabla b\cdot \nabla |\nabla b|}{b^{2n- 3}}. \nonumber 
\end{align}

Using Lemma \ref{lem Bochner formula variant} and (\ref{equation of Laplace of v}), we obtain
\begin{align}
\mathrm{div}\big(\frac{\nabla|\nabla b|}{b^{2n- 4}}\big)&= \frac{1}{b^{2n- 4}}\cdot \Big\{\frac{|\nabla^2 b|^2- |\nabla^2b(\vec{n})|^2}{|\nabla b|} + \langle \vec{n}, \nabla(\Delta b) \rangle + \operatorname{Ric}(\vec{n}, \vec{n})\cdot |\nabla b|\Big\}\nonumber \\
&\quad + (4- 2n)\frac{|\nabla b|}{b^{2n- 3}}\cdot \nabla^2 b(\vec{n}, \vec{n}) \nonumber \\
&= \frac{1}{b^{2n- 4}}\cdot \Big\{\frac{|\nabla^2 b|^2- |\nabla^2b(\vec{n})|^2}{|\nabla b|} + \operatorname{Ric}(\vec{n}, \vec{n})\cdot |\nabla b| \nonumber \\
&\quad + \langle \vec{n}, \nabla\big[(n-1)\frac{|\nabla b|^2}{b} \big] \rangle \Big\} + (4- 2n)\frac{|\nabla b|}{b^{2n- 3}}\cdot \nabla^2 b(\vec{n}, \vec{n}) \nonumber \\
&=\frac{\operatorname{Ric}(\vec{n}, \vec{n})\cdot |\nabla b|}{b^{2n- 4}}+ 2\frac{|\nabla b|\cdot \nabla^2 b(\vec{n}, \vec{n})}{b^{2n- 3}}+ \frac{|\nabla^2 b|^2- |\nabla^2b(\vec{n})|^2}{b^{2n- 4}|\nabla b|}- \frac{n- 1}{b^{2n- 2}}|\nabla b|^3.\label{integrand in 2nd integral}
\end{align}

From $\frac{|\nabla b|^2}{b}= \frac{1}{n- 1}\Delta b$ and (\ref{integrand in 2nd integral}), we get
\begin{align}
&\mathrm{div}\big(\frac{\nabla|\nabla b|}{b^{2n- 4}}\big)= \frac{\operatorname{Ric}(\vec{n}, \vec{n})\cdot |\nabla b|^2+ |\nabla^2 b|^2- |\nabla^2b(\vec{n})|^2+ 2\frac{\nabla^2 b(\vec{n}, \vec{n})}{b}|\nabla b|^2- \frac{n- 1}{b^{2}}|\nabla b|^4}{b^{2n- 4}|\nabla b|} \nonumber \\
&=\frac{\operatorname{Ric}(\vec{n}, \vec{n})\cdot |\nabla b|^2+  |\nabla^2 b|^2- |\nabla^2b(\vec{n})|^2+ 2\frac{\Delta b}{n- 1}\nabla^2 b(\vec{n}, \vec{n})- \frac{(\Delta b)^2}{n- 1}}{b^{2n- 4}|\nabla b|}\cdot \nonumber  \\
&\geq \frac{\operatorname{Ric}(\vec{n}, \vec{n})\cdot |\nabla b|^2+  \sum\limits_{i= 1}^{n- 1}|\nabla^2 b(i, i)|^2- \frac{\big(\sum\limits_{i= 1}^{n- 1}\nabla^2 b(i, i)\big)^2}{n- 1}+ \frac{|\nabla^2 b(\vec{n}, \vec{n})|^2}{n- 1}}{b^{2n- 4}|\nabla b|}\nonumber\\
&= \frac{\operatorname{Ric}(\vec{n}, \vec{n})\cdot |\nabla b|^2+  \sum\limits_{i= 1}^{n- 1}[\nabla^2 b(i, i)- \frac{\sum_{j= 1}^{n- 1}\nabla^2 b(j, j)}{n- 1}]^2+ \frac{|\nabla^2 b(\vec{n}, \vec{n})|^2}{n- 1}}{b^{2n- 4}|\nabla b|} \nonumber \\
&=\frac{\operatorname{Ric}(\vec{n}, \vec{n})\cdot |\nabla b|^2+  |\prod_0|^2\cdot |\nabla b|^2+ \frac{|\nabla^2 b(\vec{n}, \vec{n})|^2}{n- 1}}{b^{2n- 4}|\nabla b|}\geq 0, \label{non-negative 2nd integrand}
\end{align}
where $|\Pi_0|$ is the trace-free second fundamental form of $b^{-1}(t)$.

Define $\hat{A}(r)= \int_{b=r}b^{4- 2n}\langle\nabla |\nabla b|,\vec{n}\rangle$, then similar to the Step (1) and applying divergence theorem (we use the similar approximation as in \cite[Lemma $3.7$]{AFM} such that the divergence theorem can be applied), we obtain
\begin{align}
\hat{A}(t)- \hat{A}(s)&= \int_{s< b< t} \mathrm{div}\big(\frac{\nabla|\nabla b|}{b^{2n- 4}}\big). \label{first deri difference formula}
\end{align}

\textbf{Step (3)}. Note 
\begin{align}
|\hat{A}(r)|\leq \int_{b=r }b^{4-2n}|\nabla^2 b|< \infty, \quad \quad  \forall r> 0. \label{upper bound of hat A}
\end{align}
Hence from (\ref{first deri difference formula}), (\ref{non-negative 2nd integrand}) and (\ref{upper bound of hat A}), we get 
\begin{align}
\int_{s< b< t} |\mathrm{div}\big(\frac{\nabla|\nabla b|}{b^{2n- 4}}\big)|= \int_{s< b< t} \mathrm{div}\big(\frac{\nabla|\nabla b|}{b^{2n- 4}}\big)= \hat{A}(t)- \hat{A}(s)< \infty, \quad \quad \forall 0< s\leq t< \infty. \label{integral expression of hat A}
\end{align}

From Co-area formula and Fubini's Theorem, we get
\begin{align}
\hat{A}(t)- \hat{A}(s)= \int_{s< r< t}dr\int_{b= r}  \frac{1}{|\nabla b|}\cdot \mathrm{div}\big(\frac{\nabla|\nabla b|}{b^{2n- 4}}\big), \label{expression of hat A}
\end{align}
Now from the continuity of the integral with respect to variable $s, t$, using (\ref{expression of hat A}) we get that $\hat{A}: \mathbb{R}^+\rightarrow \mathbb{R}$ is continuous. 

Now from (\ref{difference between A volume}), we get
\begin{align}
\mathcal{A}(t)- \mathcal{A}(s)= \int_s^t r^{n-3}\hat{A}(r)dr, \label{expression of A by hat A}
\end{align}

From the continuity of $\hat{A}$ and (\ref{expression of A by hat A}), we obtain
    \begin{align}
        \mathcal{A}'(r)= \hat{A}(r) =\int_{b=r}b^{1-n}\langle\nabla |\nabla b|,\vec{n}\rangle, \quad \quad \forall r> 0. \nonumber 
    \end{align}
    }
\end{proof}

\begin{lemma}\label{lem mean curv of level set in form of bp}
{Assume $H$ is the mean curvature of the level set $\{x\in M^n: b(x)= t\}\subseteq M^n$, then  
\begin{align}
H= \frac{(n- 1)|\nabla b|}{b}- \frac{\nabla^2 b(\vec{n}, \vec{n})}{|\nabla b|}. \nonumber 
\end{align}
}
\end{lemma}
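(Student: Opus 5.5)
The plan is to use the standard formula expressing the mean curvature of a regular level set through the Laplacian of the defining function, and then substitute the equation (\ref{equation of Laplace of v}) for $\Delta b$. We work at a point $x$ with $b(x)=t$ and $\nabla b(x)\neq 0$, where the level set is a smooth hypersurface. We take the unit normal $\vec{n}=\frac{\nabla b}{|\nabla b|}$, as fixed at the start of the section. The sign convention is $H=\mathrm{div}_{M}(\vec{n})$, which equals the trace over $T_x b^{-1}(t)$ of $\langle \nabla_{e_i}\vec{n}, e_i\rangle$. With this convention round spheres in $\mathbb{R}^n$, as level sets of $b=\rho$, have positive mean curvature $(n-1)/\rho$.

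First, choose an orthonormal frame $\{e_1,\dots,e_{n-1},\vec{n}\}$ at $x$ with $e_i$ tangent to the level set. For tangent $e_i$ we have
\[
\langle \nabla_{e_i}\vec{n}, e_i\rangle=\frac{\nabla^2 b(e_i,e_i)}{|\nabla b|},
\]
because the derivative of the factor $|\nabla b|^{-1}$ multiplies $\nabla b$, which is orthogonal to $e_i$. Summing over $i$ gives
\[
H=\frac{1}{|\nabla b|}\sum_{i=1}^{n-1}\nabla^2 b(e_i,e_i)=\frac{\Delta b-\nabla^2 b(\vec{n},\vec{n})}{|\nabla b|}.
\]
This is the same splitting of $\Delta b$ into tangential and normal parts that was already used implicitly in (\ref{non-negative 2nd integrand}).

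Second, substitute $\Delta b=(n-1)\frac{|\nabla b|^2}{b}$ from (\ref{equation of Laplace of v}). This gives
\[
\frac{\Delta b}{|\nabla b|}=\frac{(n-1)|\nabla b|}{b},
\]
which yields the claimed formula.

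No step is a real obstacle. The only care needed is to fix the orientation and sign convention for $H$ consistently with $\vec{n}=\nabla b/|\nabla b|$, and to note that the formula is asserted only where $\nabla b\neq0$. That covers every level $t\notin\mathcal{S}$, and $\mathcal{L}^1(\mathcal{S})=0$ by Sard's theorem.
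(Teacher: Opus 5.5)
Your proposal is correct and follows the same route as the paper. The paper also quotes the general identity $H=\frac{\Delta v-\nabla^2 v(\vec{n},\vec{n})}{|\nabla v|}$ for level sets of a smooth function and substitutes $\Delta b=(n-1)\frac{|\nabla b|^2}{b}$ from (\ref{equation of Laplace of v}); your frame computation just spells out the step the paper attributes to the definition of mean curvature.
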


\pf
{From the definition of mean curvature directly, for any smooth function $v$, we have 
\begin{align}
H= \frac{\Delta v}{|\nabla v|}- \frac{\nabla^2v(\vec{n}, \vec{n})}{|\nabla v|}, \label{mean curv formula}
\end{align}
where $H$ is the mean curvature of the level set $\{x: v(x)= t\}$. 

The conclusion follows from (\ref{mean curv formula}) and (\ref{equation of Laplace of v}).
}
\qed

\begin{cor}\label{cor monotonicity of Vt}
{For $0< t_1\leq t_2$, we have
\begin{align}
&\mathcal{A}'(t_1)\leq \mathcal{A}'(t_2)\leq 0, \nonumber \\
&\lim_{t\rightarrow\infty\atop t\in \mathbb{R}^+}\mathcal{A}'(t)= 0. \nonumber 
\end{align}
}
\end{cor}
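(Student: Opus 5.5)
The plan is to work with $\hat A(r)=\int_{b=r}b^{4-2n}\langle\nabla|\nabla b|,\vec n\rangle$ from the proof of Lemma \ref{lem derivative of Vp}. That proof gives $\mathcal{A}'(r)=r^{n-3}\hat A(r)$ and shows that $\hat A$ is continuous and non-decreasing. I also need two a priori bounds on $\mathcal{A}$. Trivially $\mathcal{A}\ge 0$. For the upper bound, Lemma \ref{lem upper bound of gradient of b} gives $|\nabla b|\le 1$, and combined with (\ref{slice weighted volume}) this yields
\begin{align}
0\le \mathcal{A}(t)\le t^{1-n}\int_{b=t}|\nabla b| = n\omega_n, \quad \quad \forall t>0. \nonumber
\end{align}

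\textbf{Nonpositivity.} Suppose $\mathcal{A}'(t_0)>0$ for some $t_0$. Then $\hat A(t_0)>0$, and by monotonicity of $\hat A$,
\begin{align}
\mathcal{A}'(t)\ge t^{n-3}\hat A(t_0)\ge t_0^{n-3}\hat A(t_0)>0, \quad \quad \forall t\ge t_0. \nonumber
\end{align}
So $\mathcal{A}$ grows at least linearly, which contradicts $\mathcal{A}\le n\omega_n$. Hence $\hat A\le 0$ and $\mathcal{A}'\le 0$ everywhere.

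\textbf{Monotonicity of $\mathcal{A}'$.} For $n=3$ we have $\mathcal{A}'=\hat A$, so the claim is exactly the monotonicity in Lemma \ref{lem derivative of Vp}. For $n>3$ there is a real difficulty. We have $\hat A(t_1)\le\hat A(t_2)\le 0$, but the factor $t^{n-3}$ increases, so the two effects compete. I would write $y=-\hat A\ge 0$ and try to extract a differential inequality for $t^{n-3}y$.

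From the proof of Lemma \ref{lem derivative of Vp} and the coarea formula, the distributional derivative of $\hat A$ satisfies
\begin{align}
\hat A'(t)\ge \frac{t^{4-2n}}{n-1}\int_{b=t}\frac{|\nabla^2b(\vec n,\vec n)|^2}{|\nabla b|^2}\ge \frac{t^{4-2n}}{n-1}K(t), \quad \quad K(t):=\int_{b=t}\frac{|\nabla^2b(\vec n,\vec n)|^2}{|\nabla b|}, \nonumber
\end{align}
where the second inequality uses $|\nabla b|\le 1$. Since $\langle\nabla|\nabla b|,\vec n\rangle=\nabla^2b(\vec n,\vec n)$, Cauchy--Schwarz together with (\ref{slice weighted volume}) gives
\begin{align}
y(t)^2\le t^{8-4n}\,n\omega_n t^{n-1}K(t). \nonumber
\end{align}
Combining these yields a Riccati-type inequality $y'\le -c\,t^{n-3}y^2$. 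I would then compare
\begin{align}
(t^{n-3}y)'=(n-3)t^{n-4}y+t^{n-3}y' \nonumber
\end{align}
against this inequality. This is the main obstacle: the Riccati bound alone forces $y\lesssim t^{2-n}$, which is the wrong direction for absorbing the term $(n-3)t^{n-4}y$. So I expect to need the full strength of the integrand in Lemma \ref{lem derivative of Vp}, including the $|\Pi_0|^2$ and Ricci terms, or a sharper use of (\ref{equation of Laplace of v}). Failing that, I would argue the statement only in the form actually needed for $n=3$, which is the case relevant to Theorem \ref{thm main-2}.

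\textbf{Limit.} Once $\mathcal{A}'$ is known to be non-decreasing and bounded above by $0$, the limit $L=\lim_{t\to\infty}\mathcal{A}'(t)\le 0$ exists. If $L<0$, then $\mathcal{A}'(t)\le L$ for all $t$, so $\mathcal{A}(t)\le \mathcal{A}(1)+L(t-1)\to-\infty$. This contradicts $\mathcal{A}\ge 0$, hence $L=0$.
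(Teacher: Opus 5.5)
\textbf{For $n=3$ your proposal is a complete proof.} The bound $0\le\mathcal{A}\le n\omega_n$, the nonpositivity argument and the case $n=3$ (where $\mathcal{A}'=\hat A$) are correct and follow the paper's proof. Your direct contradiction from $\hat A(t_0)>0$ is a slightly cleaner version of the paper's Steps (1)--(2). For $n=3$ your limit argument is the paper's Step (3).

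\textbf{The gap you flag for $n>3$ cannot be closed, because monotonicity of $\mathcal{A}'$ is false for $n\ge 4$.} The paper's proof obtains it from $\mathcal{A}'(t_1)\le (t_1/t_2)^{n-3}\mathcal{A}'(t_2)\le\mathcal{A}'(t_2)$. The second inequality is reversed when $\mathcal{A}'(t_2)\le 0$ and $t_1<t_2$; this is exactly the competition between increasing $\hat A$ and increasing $t^{n-3}$ that you noticed.

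Here is a concrete counterexample. Take $g=dr^2+f(r)^2g_{\mathbb{S}^{n-1}}$ with $f$ concave, $f(r)=r$ on $[0,r_0]$, and $f'\downarrow a\in(0,1)$. This metric has $\operatorname{Ric}\ge 0$ and maximal volume growth. Normalize the Green function so that $b/\rho\to 1$; then $G=(n-2)\int_r^\infty f^{1-n}\,ds$. On $B_q(r_0)$ this gives $G=r^{2-n}+k$, where $k=(n-2)\int_{r_0}^\infty f^{1-n}\,ds-r_0^{2-n}>0$; positivity holds because $f(s)\le s$, with strict inequality for large $s$. Since $b$ is radial, $\mathcal{A}(t)=n\omega_n|\nabla b|=n\omega_n(1+kr^{n-2})^{-\frac{n-1}{n-2}}$, and therefore
\[
\mathcal{A}'(t)=-n(n-1)\omega_n\,\frac{k\,r^{n-3}}{1+k\,r^{n-2}},\qquad r=r(t)<r_0 .
\]
For $n\ge 4$ this tends to $0$ as $r\to 0$ and is strictly decreasing while $kr^{n-2}<n-3$. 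Meanwhile $\hat A=\mathcal{A}'/t^{n-3}=-n(n-1)\omega_n k(1+kr^{n-2})^{-\frac{1}{n-2}}$ is increasing, as Lemma \ref{lem derivative of Vp} asserts. So in general only $\mathcal{A}'/t^{n-3}$ is monotone, and the first line of the statement holds only for $n=3$. Your fallback of restricting to $n=3$ is the correct one.

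\textbf{The limit holds in all dimensions, but your argument for it uses the false monotonicity.} Use the monotonicity of $\hat A\le 0$ instead. For $s\in[t/2,t]$,
\[
|\mathcal{A}'(s)|=s^{n-3}|\hat A(s)|\ge 2^{3-n}|\mathcal{A}'(t)|,
\]
and hence
\[
|\mathcal{A}'(t)|\le \frac{2^{n-2}}{t}\big(\mathcal{A}(t/2)-\mathcal{A}(t)\big)\le \frac{2^{n-2}n\omega_n}{t}\to 0 .
\]
The later uses of this corollary (Lemma \ref{lem limit of A-M and V-M} and Proposition \ref{prop integral along b direction-Ricci}) need only $\mathcal{A}'\le 0$, which you proved in every dimension.
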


\pf
{\textbf{Step (1)}. We assume $\displaystyle c_1\vcentcolon= \varliminf_{t\rightarrow\infty}\mathcal{A}'(t)$, note $c_1\in [-\infty, \infty]$. We will prove $c_1\leq 0$.

By contradiction, if $c_1> 0$, then there is $\epsilon> 0$ and $t_0> 0$ such that
\begin{align}
\inf_{t\geq t_0}\mathcal{A}'(t)\geq \epsilon. \nonumber 
\end{align}

Now for any $t_1> t_0$, we have 
\begin{align}
\mathcal{A}(t_1)= \mathcal{A}(t_0)+ \int_{t_0}^{t_1}\mathcal{A}'(t)dt\geq \mathcal{A}(t_0)+ \epsilon\cdot [t_1- t_0]. \nonumber 
\end{align}

So by Lemma \ref{lem upper bound of gradient of b} and (\ref{slice weighted volume}),
\begin{align*}
    \mathcal{A}(t_1)\leq t_1^{1-n}\int_{b=t_1}|\nabla b|=\lim_{r\to 0}r^{1-n}\int_{b=r}|\nabla b|=n\omega_n.
\end{align*}

Therefore we obtain
\begin{align}
n\omega_n\geq \mathcal{A}(t_0)+ \epsilon\cdot [t_1- t_0]. \nonumber 
\end{align}

Therefore we have $\displaystyle \frac{n\omega_n- \mathcal{A}(t_0)}{t_1- t_0}\geq \epsilon$. Let $t_1\rightarrow\infty$, we get 
\begin{align}
0\geq \epsilon,\nonumber 
\end{align}
which is a contradiction. Therefore we get $c_1\leq 0$.

\textbf{Step (2)}. From $\displaystyle\varliminf_{t\rightarrow\infty}\mathcal{A}'(t)= c_1\leq 0$, we get 
\begin{align}
\varliminf_{t\rightarrow\infty}\frac{\mathcal{A}'(t)}{t^{n- 3}}\leq 0. \label{liminf of V' and tn-3}
\end{align}

From Lemma \ref{lem derivative of Vp}, we know that $\frac{\mathcal{A}'(t)}{t^{n- 3}}$ is a non-decreasing function in $t$, hence the limit $\displaystyle \lim_{t\rightarrow\infty}\frac{\mathcal{A}'(t)}{t^{n- 3}}$ exists.

Now by (\ref{liminf of V' and tn-3}), we get
\begin{align}
\lim_{t\rightarrow\infty}\frac{\mathcal{A}'(t)}{t^{n- 3}}= \varliminf_{t\rightarrow\infty}\frac{\mathcal{A}'(t)}{t^{n- 3}}\leq 0. \nonumber 
\end{align}

By Lemma \ref{lem derivative of Vp} again, we get
\begin{align}
\frac{\mathcal{A}'(s)}{s^{n- 3}}\leq \lim_{t\rightarrow\infty}\frac{\mathcal{A}'(t)}{t^{n- 3}}\leq 0, \quad \quad \forall s\in \mathbb{R}^+. \nonumber 
\end{align}

From Lemma \ref{lem derivative of Vp}, we know that $\frac{\mathcal{A}'(t)}{t^{n- 3}}$ is a non-decreasing function in $t$, and
\begin{align}
\mathcal{A}'(t_1)\leq \frac{t_1^{n- 3}}{t_2^{n- 3}}\mathcal{A}'(t_2)\leq \mathcal{A}'(t_2), \quad \quad \forall 0< t_1\leq t_2< \infty. \label{increasing of A'}
\end{align}

Therefore the limit $\displaystyle c_2\vcentcolon= \lim_{t\rightarrow\infty}\mathcal{A}'(t)\in [-\infty, 0]$ exists.

\textbf{Step (3)}. If $\displaystyle c_2< 0$, then from (\ref{increasing of A'}) we know that 
\begin{align}
\mathcal{A}'(t)\leq c_2< 0, \quad \quad \forall t\in \mathbb{R}^+. \nonumber 
\end{align}

Therefore for any $t> 1$, we have 
\begin{align}
0\leq \mathcal{A}(t)= \mathcal{A}(1)+ \int_1^t \mathcal{A}'(s)ds\leq \mathcal{A}(1)+ c_2\cdot (t- 1). \nonumber
\end{align}
Let $t\rightarrow\infty$ in the above, note $c_2< 0$, we get 
\begin{align}
0\leq -\infty, \nonumber 
\end{align}
which is a contradiction. Hence we have $\displaystyle \lim_{t\rightarrow\infty}\mathcal{A}'(t)= 0$.
}
\qed

The following lemma is an analogue of Theorem $2.12$ of \cite{Colding}.
\begin{lemma}\label{lem limit of A-M and V-M}
{For a complete non-compact Riemannian manifold $M^n$, which is non-parabolic with $Rc\geq 0$, we have 
\begin{align}
\lim_{r\rightarrow \infty}\mathcal{A}(r)= n\omega_n\big(\mathrm{V}_M\big)^{\frac{1}{n- 2}} \nonumber 
\end{align}
}
\end{lemma}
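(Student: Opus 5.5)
Since Corollary \ref{cor monotonicity of Vt} gives $\mathcal{A}'\leq 0$ on $\mathbb{R}^+$ and $\mathcal{A}\geq 0$, the function $\mathcal{A}$ is non-increasing and bounded below. Hence $L\vcentcolon=\lim_{r\to\infty}\mathcal{A}(r)\in[0,n\omega_n]$ exists, and it remains to identify $L$.

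\textbf{Step (1): transfer the limit to a volume integral.} By the co-area formula,
\begin{align}
\int_{b\leq r}|\nabla b|^3=\int_0^r dt\int_{b=t}|\nabla b|^2=\int_0^r t^{n-1}\mathcal{A}(t)\,dt. \nonumber
\end{align}
Because $\mathcal{A}(t)\to L$, an elementary Ces\`aro-type argument shows
\begin{align}
\lim_{r\to\infty}r^{-n}\int_{b\leq r}|\nabla b|^3=\frac{L}{n}. \nonumber
\end{align}
So it suffices to compute the left-hand limit and show it equals $\omega_n(\mathrm{V}_M)^{\frac{1}{n-2}}$.

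\textbf{Step (2): maximal volume growth.} If $\mathrm{V}_M>0$, Corollary \ref{cor limit of the rescaled volume by b} gives exactly this limit. Therefore $L=n\omega_n(\mathrm{V}_M)^{\frac{1}{n-2}}$.

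\textbf{Step (3): $\mathrm{V}_M=0$.} In this case we must show $L=0$, and this is the main obstacle: the earlier lemmas only give $|\nabla b|\leq 1$. That yields merely $\int_{b\leq r}|\nabla b|^3\leq\int_{b\leq r}|\nabla b|^2=\omega_nr^n$, which does not force $L=0$. My plan is to show that $|\nabla b|(x)\to 0$ as $\rho(x)\to\infty$, in three parts.
\begin{itemize}
\item[(a)] The Cheng--Yau gradient estimate for the positive harmonic function $G(q,\cdot)$ on $B_x(\rho(x)/2)$ gives $|\nabla\log G|\leq C(n)/\rho$. Since $b=G^{\frac{1}{2-n}}$, this gives $|\nabla b|\leq C(n)\,b/\rho$ for $\rho(x)\geq 1$.
\item[(b)] The Li--Yau lower bound for the Green function gives $G(q,x)\geq c(n)\int_{\rho(x)}^\infty \frac{t}{V(B_q(t))}dt$. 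Restricting to $[\rho,2\rho]$ and using Bishop--Gromov $V(B_q(t))\leq (t/\rho)^nV(B_q(\rho))$, we get $G\geq c\rho^2/V(B_q(\rho))$. Hence
\begin{align}
\frac{b}{\rho}\leq C(n)\Big(\frac{V(B_q(\rho))}{\rho^n}\Big)^{\frac{1}{n-2}}\longrightarrow 0. \nonumber
\end{align}
\item[(c)] Combining (a) and (b), for any $\epsilon>0$ there is $R$ with $|\nabla b|\leq\epsilon$ on $M^n\backslash B_q(R)$.
\end{itemize}
Splitting the integral over $B_q(R)$ and its complement then gives
\begin{align}
\int_{b\leq r}|\nabla b|^3\leq V(B_q(R))+\epsilon\int_{b\leq r}|\nabla b|^2= V(B_q(R))+\epsilon\,\omega_nr^n. \nonumber
\end{align}
Dividing by $r^n$, letting $r\to\infty$ and then $\epsilon\to 0$ yields $L/n=0$, which matches $n\omega_n(\mathrm{V}_M)^{\frac{1}{n-2}}=0$. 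This completes the plan.

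If one prefers to avoid quoting Li--Yau, an alternative for Step (3) is to invoke Colding's result \cite{Colding} that $\sup_{b=r}|\nabla b|\to(\mathrm{V}_M)^{\frac{1}{n-2}}$ directly. However, the argument via Cheng--Yau and Li--Yau is self-contained.
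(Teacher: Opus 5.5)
Your Steps (1)--(2) are exactly the paper's proof. The existence of the limit comes from Corollary \ref{cor monotonicity of Vt}. L'H\^opital (your Ces\`aro step) identifies $\lim\mathcal{A}(r)/n$ with $\lim r^{-n}\int_{b\le r}|\nabla b|^3$, and Corollary \ref{cor limit of the rescaled volume by b} evaluates it.

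The difference is that the paper stops there. Corollary \ref{cor limit of the rescaled volume by b} is only proved under maximal volume growth, so the paper leaves the case $\mathrm{V}_M=0$ unaddressed, although the statement (non-parabolic, $Rc\ge 0$) allows it. Your Step (3) supplies that case, and the argument is sound:
\begin{itemize}
\item Cheng--Yau on $B_x(\rho(x)/2)$, where $G(q,\cdot)$ is positive harmonic, gives $|\nabla b|\le C(n)\,b/\rho$.
\item The Li--Yau lower bound on $[\rho,2\rho]$, combined with Bishop--Gromov, gives $b/\rho\le C(n)\big(V(B_q(\rho))/\rho^n\big)^{\frac{1}{n-2}}\to 0$.
\item Hence $|\nabla b|\to 0$ at infinity, and with $|\nabla b|\le 1$ this forces $L=0$.
\end{itemize}
The price is importing two external estimates, which the paper's one-line argument avoids only by tacitly assuming $\mathrm{V}_M>0$.

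A small simplification of your Step (3) is available. Since $b$ is continuous, hence bounded on compact sets, the level set $\{b=r\}$ eventually lies outside $B_q(R)$. Then \eqref{slice weighted volume} gives directly $\mathcal{A}(r)\le n\omega_n\sup_{b=r}|\nabla b|\le n\omega_n\epsilon$ for large $r$. This bypasses the volume integral and the Ces\`aro step in that case.
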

\pf
{From Corollary \ref{cor monotonicity of Vt}, we know that $\displaystyle \lim_{t\rightarrow\infty}\mathcal{A}(t)$ exists.

On the other hand, from L'H\^{o}pital's rule,
\begin{align}
\lim\limits_{r\rightarrow \infty}\frac{\int_{b\leq r}|\nabla b|^3}{r^n}= \lim_{r\rightarrow \infty}\frac{\int_{b= r}|\nabla b|^2}{nr^{n- 1}}= \lim_{r\rightarrow \infty}\frac{1}{n}\mathcal{A}(r). \label{limit of A-para}
\end{align}

The conclusion follows from Corollary \ref{cor limit of the rescaled volume by b} and (\ref{limit of A-para}).
}
\qed

\begin{prop}\label{prop integral along b direction-Ricci}
{For a complete non-compact Riemannian manifold $M^n$, which is non-parabolic with $Rc\geq 0$, we have
\begin{align}
\lim_{r\rightarrow \infty}\frac{\int_{b\leq r}|\nabla b|\big[|\Pi_0|^2+ Rc(\vec{n})\big]+ \frac{|\nabla^2 b(\vec{n}, \vec{n})|^2}{(n- 1)|\nabla b|}}{r^{n- 2}}= 0. \nonumber 
\end{align}
}
\end{prop}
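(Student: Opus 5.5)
The plan is to integrate the monotonicity inequality of Lemma \ref{lem derivative of Vp} against the scale $b^{2n-4}$ and use Corollary \ref{cor monotonicity of Vt} to show that the resulting boundary terms are $o(r^{n-2})$.

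Write the nonnegative density from Lemma \ref{lem derivative of Vp} as
\begin{align}
\Phi = |\nabla b|\big[|\Pi_0|^2+ Rc(\vec{n})\big]+ \frac{|\nabla^2 b(\vec{n}, \vec{n})|^2}{(n- 1)|\nabla b|}. \nonumber
\end{align}
Then Lemma \ref{lem derivative of Vp} gives
\begin{align}
\frac{\mathcal{A}'(t)}{t^{n-3}}-\frac{\mathcal{A}'(s)}{s^{n-3}}\geq \int_{s<b<t}\frac{\Phi}{b^{2n-4}}, \nonumber
\end{align}
and in fact, by (\ref{first deri difference formula}), this is an equality after including the remaining nonnegative terms. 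Set $h(t)= \mathcal{A}'(t)/t^{n-3}$. By Corollary \ref{cor monotonicity of Vt}, $h$ is non-decreasing and $h\le 0$. Letting $t\to\infty$ gives $\int_{b>s}\Phi\, b^{4-2n}\le -h(s)<\infty$ for every $s>0$.

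To pass from the weight $b^{4-2n}$ to the scale $r^{n-2}$, use the coarea formula and Fubini. Put $\phi(\tau)=\int_{b=\tau}\Phi/|\nabla b|$, so that $\int_{s<b<t}\Phi\, b^{4-2n}=\int_s^t \tau^{4-2n}\phi(\tau)\,d\tau$. Writing $\phi(\tau)= \tau^{2n-4}\cdot\tau^{4-2n}\phi(\tau)$ gives
\begin{align}
\int_{s<b<r}\Phi \le r^{2n-4}\int_s^r \tau^{4-2n}\phi(\tau)\,d\tau \le r^{2n-4}\big(h(r)-h(s)\big). \nonumber
\end{align}
This crude bound loses a factor $r^{n-2}$, so a sharper approach is needed. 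The better choice is to integrate $\tau^{n-2}\cdot(\tau^{4-2n}\phi)$ against $dh$. Since $\tau^{4-2n}\phi(\tau)\,d\tau \le dh(\tau)$ as measures,
\begin{align}
\int_{s<b<r}\Phi\, b^{2-n} \le \int_s^r \tau^{n-2}\,dh(\tau)= r^{n-2}h(r)-s^{n-2}h(s)-(n-2)\int_s^r \tau^{n-3}h(\tau)\,d\tau. \nonumber
\end{align}
Here $\tau^{n-2}h(\tau)=\tau\mathcal{A}'(\tau)$ and $\tau^{n-3}h=\mathcal{A}'$. Hence the right side equals
\begin{align}
r\mathcal{A}'(r)-s\mathcal{A}'(s)-(n-2)\big(\mathcal{A}(r)-\mathcal{A}(s)\big). \nonumber
\end{align}
Since $\mathcal{A}'\le 0$, $\mathcal{A}$ is bounded and has a limit (Lemma \ref{lem limit of A-M and V-M}), so this right side is bounded uniformly in $r$. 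Therefore $\int_{b>1}\Phi\, b^{2-n}<\infty$.

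Now split $\int_{b\le r}\Phi$ as $\int_{b\le R}\Phi + \int_{R<b\le r}\Phi$. The second term is at most $r^{n-2}\int_{b>R}\Phi\,b^{2-n}$, whose coefficient tends to $0$ as $R\to\infty$ because it is the tail of a convergent integral. The first term is a fixed finite quantity for each $R$; this follows from applying the same inequality on $(s,R)$, together with the integrability near the pole coming from the smooth behaviour (\ref{est of gradient of b at 0}) of $b$. Dividing by $r^{n-2}$, letting $r\to\infty$ and then $R\to\infty$, yields the stated limit $0$.

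The main obstacle is justifying the integration by parts against the measure $dh$ when $h$ is only known to be monotone. To handle it, I would use the equality form (\ref{first deri difference formula}), which makes $h$ absolutely continuous with $h'=\tau^{4-2n}\phi_{\mathrm{full}}\ge\tau^{4-2n}\phi$ a.e. Then $\int_s^r\tau^{n-2}h'\,d\tau$ is an ordinary integral that can be integrated by parts. A secondary point is the finiteness of $\int_{b\le R}\Phi$ near $q$, where the singularity of $b$ must be controlled by (\ref{est of gradient of b at 0}).
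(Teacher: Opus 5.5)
Your proof is correct. It differs from the paper's route mainly in the choice of weight.

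The paper integrates the pointwise bound $\Phi\le b^{2n-4}\,\mathrm{div}\big(\nabla|\nabla b|/b^{2n-4}\big)$ over all of $\{b\le r\}$. In your notation, this means integrating $\tau^{2n-4}\,dh(\tau)$ by parts instead of bounding $\tau^{2n-4}\le r^{2n-4}$ (that bound is what lost the factor in your crude estimate). This gives
\[
\int_{b\le r}\Phi\le r^{n-1}\mathcal{A}'(r)+(2n-4)(n-2)\int_0^r t^{n-3}\big(\mathcal{A}(t)-\mathcal{A}(r)\big)\,dt ,
\]
and the $o(r^{n-2})$ then follows from a L'H\^opital/Ces\`aro argument using the convergence of $\mathcal{A}$.

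You use the weight $\tau^{n-2}$ instead. This gives the stronger, scale-free statement $\int_{b>s}\Phi\,b^{2-n}<\infty$, using only $\mathcal{A}'\le 0$ and $\mathcal{A}\ge 0$ with no limit argument. You then obtain $o(r^{n-2})$ by a Kronecker-type tail split. The paper's version is a single explicit estimate at each scale $r$; yours yields a global weighted integrability bound, which carries more information. Your justification of the integration by parts, via the absolute continuity of $h$ coming from (\ref{expression of hat A}), is fine.

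One small point concerns the pole. (\ref{est of gradient of b at 0}) only controls $b/\rho$ and $|\nabla b|$ in $C^0$, not $\nabla^2 b$, which enters $\Phi$. So it does not by itself give $\int_{b\le s}\Phi<\infty$. Your own estimate closes this without any local analysis at $q$:
\begin{itemize}
\item Since $\mathcal{A}'$ is non-decreasing, $0\le -s\mathcal{A}'(s)\le 2\big(\mathcal{A}(s/2)-\mathcal{A}(s)\big)$.
\item $\mathcal{A}$ is monotone and bounded by $n\omega_n$, so the right side tends to $0$ as $s\to 0$.
\item Letting $s\to 0$ in your weighted inequality gives $\int_{b<r}\Phi\,b^{2-n}\le (n-2)\big(\lim_{s\to 0}\mathcal{A}(s)-\mathcal{A}(r)\big)$.
\item Hence $\int_{b<r}\Phi\le r^{n-2}\int_{b<r}\Phi\,b^{2-n}<\infty$ for every $r$.
\end{itemize}
The paper tacitly uses the same finiteness when it applies the divergence theorem on the whole of $\{b\le r\}$.
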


\pf
{Taking the integral of (\ref{non-negative 2nd integrand}) with respect to $t$ from $0$ to $r$, using Corollary \ref{cor monotonicity of Vt} we obtain
\begin{align}
&\frac{\int_{b\leq r} \Big\{\operatorname{Ric}(\vec{n}, \vec{n})|\nabla b|+ |\Pi_0|^2\cdot |\nabla b|+ \frac{|\nabla^2 b(\vec{n}, \vec{n})|^2}{(n- 1)|\nabla b|}\Big\}}{r^{n- 2}} \leq \frac{\int_{b\leq r} \mathrm{div}(\frac{\nabla|\nabla b|}{b^{2n- 4}})\cdot b^{2n- 4}}{r^{n- 2}} \nonumber\\
&=\frac{\int_{b\leq r}\Delta |\nabla b|- (2n-4)\frac{\nabla^2b(\vec{n}, \vec{n})}{b}\cdot |\nabla b|}{r^{n- 2}}\nonumber \\
&= r^{2-n}\int_{b= r}\nabla_{\vec{n}}|\nabla b|- (2n-4)\frac{\int_0^r dt\int_{b= t}\frac{\nabla^2b(\vec{n}, \vec{n})}{b}}{r^{n- 2}}\nonumber \\
&= r\mathcal{A}'(r)- (2n-4)\frac{\int_{0}^r\mathcal{A}'(t)t^{n- 2}dt}{r^{n- 2}} \nonumber \\
&\leq - (2n-4)\frac{\int_{0}^r\mathcal{A}'(t)t^{n- 2}dt}{r^{n- 2}}.\nonumber 
\end{align}
Using integration by parts, we get
\begin{align}
    &\frac{\int_{b\leq r} \Big\{\operatorname{Ric}(\vec{n}, \vec{n})|\nabla b|+ |\Pi_0|^2\cdot |\nabla b|+ \frac{|\nabla^2 b(\vec{n}, \vec{n})|^2}{(n- 1)|\nabla b|}\Big\}}{r^{n- 2}}\leq - (2n-4)\frac{\int_{0}^r\mathcal{A}'(t)t^{n- 2}dt}{r^{n- 2}}\nonumber\\
&=-(2n-4)\frac{r^{n- 2}\mathcal{A}(r)- (n- 2)\int_0^r \mathcal{A}(t) t^{n- 3}dt}{r^{n-2}}\nonumber \\
&= (2n-4)\left[-\mathcal{A}(r)+ (n- 2)\frac{\int_0^r \mathcal{A}(t) t^{n- 3}dt}{r^{n-2}}\right]. \label{equation without limit}
\end{align}

Taking the upper limit of (\ref{equation without limit}), using L'H\^{o}pital's rule and Lemma \ref{lem limit of A-M and V-M}, we have
\begin{align}
&\varlimsup_{r\rightarrow\infty}\frac{\int_{b\leq r}|\nabla b|\big[|\Pi_0|^2+ Rc(\vec{n})\big]+ \frac{|\nabla^2 b(\vec{n}, \vec{n})|^2}{(n- 1)|\nabla b|}}{r^{n- 2}}\leq (2n-4)\Big\{-\lim_{r\rightarrow\infty}\mathcal{A}(r)+ \lim_{r\rightarrow\infty}\mathcal{A}(r)\Big\} = 0. \nonumber 
\end{align}

}
\qed



\begin{prop}\label{prop integral along b direction-H2}
{For a complete non-compact Riemannian manifold $M^n$, which is non-parabolic with $Rc\geq 0$, we have
\begin{align}
&\lim_{r\rightarrow \infty}r^{2- n}\int_{b\leq r}|\nabla b|\cdot H^2= \frac{(n- 1)^2n}{n- 2}\omega_n\big(\mathrm{V}_M\big)^{\frac{1}{n- 2}} , \nonumber 
\end{align}
where $H$ is the mean curvature of the level set of $b$ with respect to the normal vector $\frac{\nabla b}{|\nabla b|}$.
}
\end{prop}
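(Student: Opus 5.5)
We use Lemma \ref{lem mean curv of level set in form of bp} to write
\begin{align}
|\nabla b|H^2= (n-1)^2\frac{|\nabla b|^3}{b^2}- 2(n-1)\frac{|\nabla b|\,\nabla^2 b(\vec{n},\vec{n})}{b}+ \frac{|\nabla^2 b(\vec{n},\vec{n})|^2}{|\nabla b|}, \nonumber
\end{align}
and treat the three terms on $\{b\leq r\}$ separately, always normalized by $r^{n-2}$.

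\textbf{First term.} By the co-area formula,
\begin{align}
\int_{b\leq r}\frac{|\nabla b|^3}{b^2}=\int_0^r t^{-2}\int_{b=t}|\nabla b|^2\,dt=\int_0^r t^{n-3}\mathcal{A}(t)\,dt. \nonumber
\end{align}
The integrand is integrable near $0$ because $n\geq 3$ and $\mathcal{A}(t)\to n\omega_n$ as $t\to 0$ by Lemma \ref{lem volume near singularity} and (\ref{est of gradient of b at 0}). L'H\^opital's rule and Lemma \ref{lem limit of A-M and V-M} then give
\begin{align}
r^{2-n}\int_{b\leq r}\frac{|\nabla b|^3}{b^2}\to \frac{n\omega_n(\mathrm{V}_M)^{\frac{1}{n-2}}}{n-2}. \nonumber
\end{align}
So the first term contributes $(n-1)^2 n\omega_n(\mathrm{V}_M)^{\frac{1}{n-2}}/(n-2)$, which is exactly the claimed limit.

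\textbf{Third term.} This is controlled by Proposition \ref{prop integral along b direction-Ricci}: the integral of $\frac{|\nabla^2 b(\vec{n},\vec{n})|^2}{|\nabla b|}$ over $\{b\leq r\}$ is $o(r^{n-2})$.

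\textbf{Cross term.} Since $\nabla^2 b(\vec{n},\vec{n})=\langle\nabla|\nabla b|,\vec{n}\rangle$, the co-area formula and Lemma \ref{lem derivative of Vp} give
\begin{align}
\int_{b\leq r}\frac{|\nabla b|\nabla^2 b(\vec{n},\vec{n})}{b}=\int_0^r t^{-1}\int_{b=t}\langle\nabla|\nabla b|,\vec{n}\rangle\,dt=\int_0^r t^{n-2}\mathcal{A}'(t)\,dt. \nonumber
\end{align}
Integrating by parts, as in the proof of Proposition \ref{prop integral along b direction-Ricci}, this equals
\begin{align}
r^{n-2}\mathcal{A}(r)-(n-2)\int_0^r\mathcal{A}(t)t^{n-3}\,dt. \nonumber
\end{align}
The boundary term at $0$ vanishes because $\mathcal{A}$ is bounded near $0$. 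Dividing by $r^{n-2}$ and using L'H\^opital's rule with Lemma \ref{lem limit of A-M and V-M}, both pieces tend to $\lim_{r\to\infty}\mathcal{A}(r)$, so the difference tends to $0$.

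As an alternative for the cross term, Cauchy--Schwarz bounds it by the square root of the third-term integral times the square root of the first-term integral. This is $o(r^{n-2})^{1/2}\cdot O(r^{n-2})^{1/2}=o(r^{n-2})$, which avoids the integration by parts entirely.

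\textbf{Main obstacle.} The delicate point is the behaviour near $t=0$ and near critical levels. We need the integrals above to converge at the singularity and the co-area manipulations to be legitimate across the null set $\mathcal{S}$. Near $q$, (\ref{est of gradient of b at 0}) gives $|\nabla b|\approx 1$ and $b\approx\rho$, so $|\nabla b|^3/b^2\sim\rho^{-2}$, which is integrable for $n\geq 3$. The Hessian term is locally integrable because it is bounded by the integrand in Proposition \ref{prop integral along b direction-Ricci}, which is finite on every $\{b\leq r\}$. Once these integrability points are checked, summing the three contributions gives the stated limit.
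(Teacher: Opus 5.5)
Your proposal is correct and follows the paper's proof essentially step by step. You use the same three-term expansion via Lemma \ref{lem mean curv of level set in form of bp}, L'H\^opital with Lemma \ref{lem limit of A-M and V-M} for the leading term, and Proposition \ref{prop integral along b direction-Ricci} for the Hessian term, and you handle the cross term by the same identity $\int_0^r t^{n-2}\mathcal{A}'(t)\,dt$ and integration by parts; your Cauchy--Schwarz alternative for the cross term is also valid and is a shortcut the paper does not use.
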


\pf
{Using $\Delta b= (n-1)\frac{|\nabla b|^2}{b}$, from (\ref{mean curv formula}), Proposition \ref{prop integral along b direction-Ricci} and L'H\^{o}pital's rule, we have 
\begin{align}
&\lim_{r\rightarrow \infty}r^{2- n}\int_{b\leq r}|\nabla b|\cdot H^2 \nonumber \\
&= \lim_{r\rightarrow \infty}r^{2- n}\int_{b\leq r}|\nabla b|\cdot \Big(\frac{\Delta b}{|\nabla b|}- \frac{\nabla^2 b(\vec{n}, \vec{n})}{|\nabla b|}\Big)^2 \nonumber \\
&= \lim_{r\rightarrow \infty}\frac{\int_{b\leq r}|\nabla b|\cdot \Big(\frac{\Delta b}{|\nabla b|}\Big)^2}{r^{n- 2}}+ \frac{\int_{b\leq r}|\nabla b|\cdot \Big(\frac{\nabla^2 b(\vec{n}, \vec{n})}{|\nabla b|}\Big)^2}{r^{n- 2}}- 2 \frac{\int_{b\leq r}\Delta b\frac{\nabla^2 b(\vec{n}, \vec{n})}{|\nabla b|}}{r^{n- 2}} \nonumber \\
&= \lim_{r\rightarrow \infty}\frac{(n- 1)^2\int_{b\leq r}\frac{|\nabla b|^3}{b^2}}{r^{n- 2}}+ \frac{\int_{b\leq r}\frac{|\nabla^2 b(\vec{n}, \vec{n})|^2}{|\nabla b|}}{r^{n- 2}}- 2(n- 1) \frac{\int_{b\leq r}|\nabla b|\frac{\nabla^2 b(\vec{n}, \vec{n})}{b}}{r^{n- 2}} \nonumber \\
&= \lim_{r\rightarrow \infty}\frac{(n- 1)^2\int_{b= r}\frac{|\nabla b|^2}{b^2}}{(n- 2)r^{n- 3}}- 2(n- 1) \frac{\int_{b\leq r}|\nabla b|\frac{\nabla^2 b(\vec{n}, \vec{n})}{b}}{r^{n- 2}} \nonumber \\
&= \lim_{r\rightarrow \infty}\frac{(n- 1)^2}{n- 2}\mathcal{A}(r)- 2(n- 1) \frac{\int_{b\leq r}|\nabla b|\frac{\nabla^2 b(\vec{n}, \vec{n})}{b}}{r^{n- 2}} .\label{limit with two terms}
\end{align}

Note $\mathcal{A}'(t)= t^{1- n}\int_{b= t}\nabla^2 b(\vec{n}, \vec{n})$, integration by parts yields 
\begin{align}
&\lim_{r\rightarrow \infty}\frac{\int_{b\leq r}|\nabla b|\frac{\nabla^2 b(\vec{n}, \vec{n})}{b}}{r^{n- 2}}= \lim_{r\rightarrow \infty}\frac{\int_0^r dt\int_{b= t}\frac{\nabla^2 b(\vec{n}, \vec{n})}{b}}{r^{n- 2}} = \lim_{r\rightarrow \infty}\frac{\int_0^r t^{n- 2}\mathcal{A}'(t) dt}{r^{n- 2}} \nonumber \\
&= \lim_{r\rightarrow \infty}\frac{\mathcal{A}(r)r^{n- 2}- (n- 2)\int_0^r t^{n- 3}\mathcal{A}(t) dt}{r^{n- 2}} \nonumber \\
&= \lim_{r\rightarrow \infty}\mathcal{A}(r)- (n- 2)\frac{\int_0^r t^{n- 3}\mathcal{A}(t) dt}{r^{n- 2}}. \label{limit of the subtle term} 
\end{align}

From Lemma \ref{lem limit of A-M and V-M} and (\ref{limit of the subtle term}), using L'H\^{o}pital's rule, we get
\begin{align}
&\lim_{r\rightarrow \infty}\frac{\int_{b\leq r}|\nabla b|\frac{\nabla^2 b(\vec{n}, \vec{n})}{b}}{r^{n- 2}}= \lim_{r\rightarrow \infty}\mathcal{A}(r)- \lim_{r\rightarrow \infty}\mathcal{A}(r)= 0. \label{the subtle limit is 0}
\end{align}

Now from Lemma \ref{lem limit of A-M and V-M}, (\ref{limit with two terms}) and (\ref{the subtle limit is 0}), we  get
\begin{align}
&\lim_{r\rightarrow \infty}r^{2- n}\int_{b\leq r}|\nabla b|\cdot H^2= \frac{(n- 1)^2n}{n- 2}\omega_n\big(\mathrm{V}_M\big)^{\frac{1}{n- 2}} . \nonumber 
\end{align}
}
\qed

\section{Integral formula for scalar curvature}

\begin{lemma}\label{lem measure and integral ineq}
{For a complete non-compact, non-parabolic Riemannian manifold $M^n$ with $Rc\geq 0$, for any $A\subseteq \mathbb{R}^+- \mathcal{S}$, we have 
\begin{align}
&\big(\mathrm{V}_M\big)^{\frac{1}{n- 2}}\cdot \varlimsup_{r\rightarrow\infty}\frac{\int_{A_r}t^{n- 3}dt}{r^{n- 2}} \leq \varlimsup_{r\rightarrow\infty}\frac{\int_{b^{-1}(A_r)}R(b^{-1}(t))\cdot |\nabla b|}{(n- 2)(n- 1)n\omega_n r^{n- 2}}, \label{meas sup}\\
&\big(\mathrm{V}_M\big)^{\frac{1}{n- 2}}\cdot \varliminf_{r\rightarrow \infty}\frac{\int_{A_r}t^{n- 3}dt}{r^{n- 2}} \leq \varliminf_{r\rightarrow\infty}\frac{\int_{b^{-1}(A_r)}R(b^{-1}(t))\cdot |\nabla b|}{(n- 2)(n- 1)n\omega_n r^{n- 2}}, \label{meas inf}
\end{align}
where $A_r\vcentcolon= A\cap (0, r]$ for any $r> 0$. 
}
\end{lemma}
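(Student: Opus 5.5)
The plan is to bound the intrinsic scalar curvature of each regular level set $b^{-1}(t)$, $t\in A\subseteq \mathbb{R}^+-\mathcal{S}$, from below by the Gauss equation. The bound has a main term controlled by $H^2$ and error terms already shown to be negligible in Proposition \ref{prop integral along b direction-Ricci}. Integrating over $t\in A_r$ with the coarea formula should then give the inequalities (\ref{meas sup}) and (\ref{meas inf}) directly.

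\textbf{Step 1 (Gauss equation).} For $t\notin\mathcal{S}$, the set $b^{-1}(t)$ is a smooth hypersurface with unit normal $\vec{n}$. The twice-traced Gauss equation gives
\begin{align*}
R(b^{-1}(t))= R- 2\operatorname{Ric}(\vec{n},\vec{n})+ H^2- |\Pi|^2= R- 2\operatorname{Ric}(\vec{n},\vec{n})+ \frac{n-2}{n-1}H^2- |\Pi_0|^2 .
\end{align*}
Since $\operatorname{Ric}\geq 0$ we have $R\geq 0$, so
\begin{align*}
R(b^{-1}(t))\geq \frac{n-2}{n-1}H^2- 2\operatorname{Ric}(\vec{n},\vec{n})- |\Pi_0|^2 .
\end{align*}
Multiply by $|\nabla b|$ and integrate over $b^{-1}(A_r)\subseteq\{b\leq r\}$. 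By Proposition \ref{prop integral along b direction-Ricci}, the integrals of $|\nabla b|\,(2\operatorname{Ric}(\vec{n},\vec{n})+|\Pi_0|^2)$ are $o(r^{n-2})$. Hence they do not affect either the upper or the lower limit after dividing by $r^{n-2}$.

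\textbf{Step 2 (lower bound for the $H^2$ term).} By Lemma \ref{lem mean curv of level set in form of bp},
\begin{align*}
H^2\geq \frac{(n-1)^2|\nabla b|^2}{b^2}- \frac{2(n-1)\nabla^2 b(\vec{n},\vec{n})}{b},
\end{align*}
since we may drop the square $\big(\nabla^2 b(\vec{n},\vec{n})/|\nabla b|\big)^2\geq 0$. By the coarea formula, $\int_{b^{-1}(A_r)}|\nabla b|H^2=\int_{A_r}dt\int_{b=t}H^2$. On each slice:
\begin{itemize}
\item the first term gives $(n-1)^2 t^{n-3}\mathcal{A}(t)$;
\item the second gives $-2(n-1)t^{-1}\int_{b=t}\nabla^2 b(\vec{n},\vec{n})=-2(n-1)t^{n-2}\mathcal{A}'(t)$, using the formula for $\mathcal{A}'$ from Lemma \ref{lem derivative of Vp}, since $\langle\nabla|\nabla b|,\vec{n}\rangle=\nabla^2 b(\vec{n},\vec{n})$.
\end{itemize}
By Corollary \ref{cor monotonicity of Vt}, $\mathcal{A}'\leq 0$, so the second contribution is nonnegative and can be discarded. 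Also $\mathcal{A}$ is non-increasing, so Lemma \ref{lem limit of A-M and V-M} gives $\mathcal{A}(t)\geq n\omega_n(\mathrm{V}_M)^{\frac{1}{n-2}}$ for all $t$. Therefore
\begin{align*}
\frac{n-2}{n-1}\int_{b^{-1}(A_r)}|\nabla b|H^2\geq (n-2)(n-1)n\omega_n(\mathrm{V}_M)^{\frac{1}{n-2}}\int_{A_r}t^{n-3}dt .
\end{align*}

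\textbf{Step 3 (conclusion).} Combining Steps 1 and 2, dividing by $(n-2)(n-1)n\omega_n r^{n-2}$, and taking $\varlimsup$ or $\varliminf$ gives (\ref{meas sup}) and (\ref{meas inf}), because the error terms are $o(1)$ after this normalization.

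The only delicate point is justifying the slice-wise coarea manipulations. These require the integrals over $b^{-1}(A_r)$ to be finite, or at least well defined with the correct sign, and $A_r$ to be measurable. This is fine because $A$ avoids $\mathcal{S}$, so all relevant level sets are smooth and compact by \cite{Cheng}, and the integrands are locally integrable away from $q$. Near $t=0$ the factor $t^{n-3}\mathcal{A}(t)$ is bounded by $n\omega_n t^{n-3}$, which is integrable since $n\geq 3$.
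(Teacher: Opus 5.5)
Your proof is correct. Its skeleton matches the paper's: the Gauss equation, discarding $R\geq 0$, removing the $\operatorname{Ric}(\vec{n},\vec{n})$ and $|\Pi_0|^2$ terms via Proposition \ref{prop integral along b direction-Ricci}, and bounding the $H^2$ term below using $\mathcal{A}(t)\geq \lim_{s\to\infty}\mathcal{A}(s)$.

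The genuine difference is how you treat the cross term $-2(n-1)\nabla^2 b(\vec{n},\vec{n})/b$ in the expansion of $H^2$.

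\textbf{The paper's route.} It keeps the square $|\nabla^2 b(\vec{n},\vec{n})|^2/|\nabla b|^2$ and absorbs the cross term by Young's inequality with a parameter $\epsilon$. It then invokes the Hessian part of Proposition \ref{prop integral along b direction-Ricci} to get $r^{2-n}\int_{b\le r}|\nabla^2 b(\vec{n},\vec{n})|^2/|\nabla b|\to 0$, and lets $\epsilon\to 0$ only after passing to $\varlimsup$ or $\varliminf$.

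\textbf{Your route.} You integrate the cross term exactly on each regular slice and recognize it as $-2(n-1)t^{n-2}\mathcal{A}'(t)$. This is the same identity the paper uses later in the proof of Proposition \ref{prop integral along b direction-H2}. You then drop it using $\mathcal{A}'\leq 0$ from Corollary \ref{cor monotonicity of Vt}, together with the nonnegative square.

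\textbf{Comparison.} Your version yields a non-asymptotic, slice-wise bound $\int_{b=t}H^2\geq (n-1)^2t^{n-3}\mathcal{A}(t)\geq (n-1)^2n\omega_n(\mathrm{V}_M)^{\frac{1}{n-2}}t^{n-3}$ for every $t\notin\mathcal{S}$. It needs no $\epsilon$-argument and uses only the $\operatorname{Ric}$ and $|\Pi_0|^2$ parts of the Proposition. The paper's version stays entirely at the level of integrated asymptotics. Since the paper relies on the sign of $\mathcal{A}'$ anyway (through the monotonicity of $\mathcal{A}$), your argument is the more economical one here.

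One small attribution point: compactness of the regular level sets comes from properness of $b$, i.e. decay of the Green function. Cheng's result gives finiteness of the $\mathcal{H}^{n-1}$-measure of level sets and null measure of the critical set, not compactness.
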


\pf
{\textbf{Step (1)}. From the Gauss equation, we have 
\begin{align}
R(M^n)&= R\big(b^{-1}(t)\big)+ 2Rc(\vec{n})- 2\sum_{i\neq j}\lambda_i\lambda_j \nonumber \\
&= R\big(b^{-1}(t)\big)+ 2Rc(\vec{n})- \Big(\frac{n- 2}{n- 1}H^2- |\Pi_0|^2\Big) \nonumber \\
&= R\big(b^{-1}(t)\big)+ 2Rc(\vec{n})+ |\Pi_0|^2- \frac{n- 2}{n- 1}H^2. \label{scalar curv in term of Ricci and H-n-dim} 
\end{align}

From (\ref{scalar curv in term of Ricci and H-n-dim}) and $R(M^n)\geq 0$, we get
\begin{align}
&\frac{\int_{b^{-1}(A_r)}R(b^{-1}(t))\cdot |\nabla b|}{r^{n- 2}}\geq \frac{n- 2}{n- 1} \frac{\int_{b^{-1}(A_r)}H^2\cdot |\nabla b|}{r^{n- 2}}- \frac{\int_{b^{-1}(A_r)}[2Rc(\vec{n})+ |\Pi_0|^2]\cdot |\nabla b|}{r^{n- 2}}. \label{ineq of curv and second fund} 
\end{align}

Taking the limit of (\ref{ineq of curv and second fund}), from Proposition \ref{prop integral along b direction-Ricci}, we have
\begin{align}
&\varlimsup_{r\rightarrow\infty}\frac{\int_{b^{-1}(A_r)}R(b^{-1}(t))\cdot |\nabla b|}{r^{n- 2}}\geq \frac{n- 2}{n- 1} \varlimsup_{r\rightarrow\infty}\frac{\int_{b^{-1}(A_r)}H^2\cdot |\nabla b|}{r^{n- 2}}\label{ineq with R and H2} \\
&\varliminf_{r\rightarrow\infty}\frac{\int_{b^{-1}(A_r)}R(b^{-1}(t))\cdot |\nabla b|}{r^{n- 2}}\geq \frac{n- 2}{n- 1} \varliminf_{r\rightarrow\infty}\frac{\int_{b^{-1}(A_r)}H^2\cdot |\nabla b|}{r^{n- 2}}. \label{ineq with R and H2-inf}
\end{align}

\textbf{Step (2)}. From Lemma \ref{lem mean curv of level set in form of bp}, we know 
\begin{align}
H= \frac{(n- 1)|\nabla b|}{b}- \frac{\nabla^2 b(\vec{n}, \vec{n})}{|\nabla b|} .\label{expression of H in form of b}
\end{align}

From (\ref{expression of H in form of b}) and Proposition \ref{prop integral along b direction-Ricci}, we get
\begin{align}
&\varlimsup_{r\rightarrow \infty}\frac{\int_{b^{-1}(A_r)}|\nabla b|\cdot H^2}{r^{n- 2}}\nonumber\\
&= \varlimsup_{r\rightarrow \infty}\frac{\int_{b^{-1}(A_r)} \Big\{(n- 1)^2\frac{|\nabla b|^3}{b^2}- 2(n- 1)\frac{|\nabla b|\cdot \nabla^2 b(\vec{n}, \vec{n})}{b} + \frac{|\nabla^2 b(\vec{n}, \vec{n})|^2}{|\nabla b|}\Big\}}{r^{n- 2}} \nonumber \\
&\geq ((n- 1)^2- \epsilon)\varlimsup_{r\rightarrow \infty}\frac{\int_{b^{-1}(A_r)} \frac{|\nabla b|^3}{b^2}}{r^{n- 2}}+ \big(1- \frac{(n- 1)^2}{\epsilon}\big)\lim_{r\rightarrow \infty}
\frac{\int_{b^{-1}(A_r)} \frac{|\nabla^2 b(\vec{n}, \vec{n})|^2}{|\nabla b|}}{r^{n- 2}} \nonumber \\
&\geq ((n- 1)^2- \epsilon)\varlimsup_{r\rightarrow \infty}\frac{\int_{b^{-1}(A_r)} \frac{|\nabla b|^3}{b^2}}{r^{n- 2}}. \nonumber 
\end{align}

Let $\epsilon\rightarrow 0$ in the above, we have 
\begin{align}
\varlimsup_{r\rightarrow \infty}\frac{\int_{b^{-1}(A_r)}|\nabla b|\cdot H^2}{r^{n- 2}}\geq (n- 1)^2\varlimsup_{r\rightarrow \infty}\frac{\int_{b^{-1}(A_r)} \frac{|\nabla b|^3}{b^2}}{r^{n- 2}} . \nonumber 
\end{align}

By Lemma \ref{lem limit of A-M and V-M} and the above, we have 
\begin{align}
&\varlimsup_{r\rightarrow \infty}\frac{\int_{b^{-1}(A_r)}|\nabla b|\cdot H^2}{r^{n- 2}}\geq (n- 1)^2\varlimsup_{r\rightarrow \infty}\frac{\int_{A_r}dt\int_{b^{-1}(t)} \frac{|\nabla b|^2}{b^2}}{r^{n- 2}} \nonumber \\
&\geq (n- 1)^2\varlimsup_{r\rightarrow \infty}\frac{\int_{A_r}t^{n- 3}dt}{r^{n- 2}}\cdot \lim_{r\rightarrow\infty} \mathcal{A}(r)\\
&= (n-1)^2n\omega_n (\mathrm{V}_{M})^{\frac{1}{n- 2}}\cdot\varlimsup_{r\rightarrow \infty}\frac{\int_{A_r}t^{n- 3}dt}{r^{n- 2}}. \label{lower bound of H2}
\end{align}

Similarly, we can get
\begin{align}
&\varliminf_{r\rightarrow \infty}\frac{\int_{b^{-1}(A_r)}|\nabla b|\cdot H^2}{r^{n- 2}}\geq (n-1)^2n\omega_n (\mathrm{V}_{M})^{\frac{1}{n- 2}}\cdot \varliminf_{r\rightarrow \infty}\frac{\int_{A_r}t^{n- 3}dt}{r^{n- 2}}. \label{lower bound of H2-inf}
\end{align}

From (\ref{ineq with R and H2}) and (\ref{lower bound of H2}), we obtain (\ref{meas sup}).

Similarly from (\ref{ineq with R and H2-inf}) and (\ref{lower bound of H2-inf}), we can obtain (\ref{meas inf}).
}
\qed

\begin{theorem}\label{thm 3-mflds with non-negative Ricci}
{For a complete non-compact $3$-dim Riemannian manifold $(M^3, g)$ with $Rc\geq 0$, then one of the following holds:
\begin{enumerate}
\item[(a).] $M^3$ is diffeomorphic to $\mathbb{R}^3$;
\item[(b).] the universal cover of $M^3$ is isometric to $N^2\times \mathbb{R}$, where $N^2$ is a complete Riemannian manifolds with sectional curvature $K\geq 0$.
\end{enumerate}
}
\end{theorem}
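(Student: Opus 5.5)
This dichotomy is essentially the classification of complete non-compact $3$-manifolds with $Rc\geq 0$ due to Liu (building on Schoen--Yau and Anderson--Rodr\'iguez). My plan is to assemble it from three ingredients in the following order: the splitting theorem, the topology of the universal cover via minimal surfaces, and the fundamental group when the cover is $\mathbb{R}^3$.

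\textbf{Step 1 (splitting).} Let $\tilde M$ be the universal cover with the pulled-back metric; it is complete with $Rc\geq 0$. If $\tilde M$ contains a line, the Cheeger--Gromoll splitting theorem gives $\tilde M=N^2\times\mathbb{R}$ isometrically. In dimension two $Rc\geq 0$ is equivalent to $K\geq 0$, so $N^2$ has $K\geq 0$ and we are in case (b). Note that $\tilde M$ may be compact when $M$ is non-compact only if $\pi_1(M)$ is infinite. If $\pi_1(M)$ is infinite and $\tilde M$ is non-compact, then a standard argument with deck transformations produces a line in $\tilde M$ (take midpoints of segments joining $x$ to $\gamma_i x$). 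So outside case (b) we may assume either that $\tilde M$ contains no line, or that $\tilde M$ is compact. In the compact case $\tilde M$ is a closed simply connected $3$-manifold, hence $\mathbb{S}^3$. A finite quotient of $\mathbb{S}^3$ is compact, while $M$ is non-compact, so this case forces $\pi_1$ infinite acting freely and cocompactly on a compact space, which is impossible. So this case does not occur.

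\textbf{Step 2 (topology of $\tilde M$).} When $\tilde M$ is non-compact, I would invoke Schoen--Yau's result that a complete non-compact $3$-manifold with $Rc>0$ is diffeomorphic to $\mathbb{R}^3$, in Liu's extension to $Rc\geq 0$. The mechanism is as follows. If $\tilde M$ were not contractible, or had nontrivial $\pi_2$ or nontrivial topology at infinity, one could use Meeks--Simon--Yau minimization to produce a complete area-minimizing (stable) surface. The second variation with $Rc\geq 0$ then forces this surface to be totally geodesic and flat, with $Rc(\nu,\nu)=0$. A local splitting argument propagates this to a line, which brings us back to case (b). Otherwise $\tilde M$ is simply connected, irreducible, and simply connected at infinity, and hence is $\mathbb{R}^3$ by the resolution of the Poincar\'e conjecture.

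\textbf{Step 3 ($\pi_1(M)$).} If $\tilde M\cong\mathbb{R}^3$ contains no line, then $\pi_1(M)$ must be finite; otherwise, as in Step 1, a line appears. A finite group acting freely on $\mathbb{R}^3$ is trivial, since $\mathbb{R}^3/\Gamma$ would be a $K(\Gamma,1)$ of finite dimension and finite groups have infinite cohomological dimension. Hence $M=\tilde M\cong\mathbb{R}^3$, which is case (a).

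The main obstacle is Step 2, the minimal surface argument showing that a non-compact $\tilde M$ is $\mathbb{R}^3$ unless it splits. I would not reprove this step and would instead cite Liu (\emph{3-manifolds with nonnegative Ricci curvature}, Invent. Math. 2013), together with Schoen--Yau. The paper itself presumably treats this theorem as a quoted result.
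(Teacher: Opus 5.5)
The paper gives no argument for this theorem; its proof is simply a citation of Liu's theorem, with an alternative proof referenced. So your eventual appeal to Liu is in the same spirit. The gap is in the scaffolding you build around that citation.

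In Steps 1 and 3 you claim that if $\pi_1(M)$ is infinite and $\tilde M$ is non-compact, then midpoints of segments from $x$ to $\gamma_i x$ produce a line in $\tilde M$. That argument works only when the deck group acts cocompactly, i.e.\ when $M$ is compact. You need to translate the midpoints back into a fixed compact fundamental domain before extracting a limit. Here $M$ is non-compact, so the midpoints may project to points escaping to infinity in $M$, and no limiting line need exist.

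This is not a technicality. Nabonnand's complete metric with $Rc>0$ on $\mathbb{R}^3\times S^1$ has infinite fundamental group. Its universal cover still has $Rc>0$, so by the splitting theorem it contains no line. Thus ``$\pi_1(M)$ infinite forces $\tilde M$ to split'' is genuinely three-dimensional information. It is exactly the part of Liu's theorem, obtained with minimal surface techniques, that yields the Milnor conjecture in dimension three. Consequently Step 3 is unsupported. Knowing $\tilde M\cong\mathbb{R}^3$ with no line, you correctly exclude only \emph{finite} nontrivial $\pi_1(M)$, via cohomological dimension. The infinite case is left open.

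The repair is to apply Liu's theorem to $M$ itself rather than to $\tilde M$. The statement is Liu's theorem applied to $M$, so the dichotomy follows at once; this is exactly what the paper does.

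Two smaller points:
\begin{enumerate}
\item The compact-$\tilde M$ discussion is vacuous. A covering map is a continuous surjection, so $\tilde M$ compact would force $M$ compact.
\item Your Step 2 sketch (stability gives a totally geodesic surface with $Rc(\nu,\nu)=0$, and ``a local splitting argument propagates this to a line'') hides the real difficulty. Stability alone does not produce a splitting; getting one without extra curvature hypotheses is the substance of Liu's paper. Since you cite that step it is not load-bearing, but it should not be presented as an argument.
\end{enumerate}
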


\pf
{See \cite[Theorem $2$]{Liu} (also see \cite{CMS} for an alternative proof).
}
\qed

\begin{prop}\label{prop topology of non-parabolic} 
{For a complete non-compact Riemannian manifold $(M^3, g)$ with $Rc\geq 0$, if $(M^3, g)$ is non-parabolic, then $M^3$ is diffeomorphic to $\mathbb{R}^3$.
}
\end{prop}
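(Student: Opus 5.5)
Our plan is to invoke Theorem \ref{thm 3-mflds with non-negative Ricci} and rule out alternative (b) using non-parabolicity. Suppose (b) holds, so the universal cover $\tilde M^3$ is isometric to $N^2\times\mathbb{R}$ with $N^2$ complete and $K_N\geq 0$. We aim for a contradiction by showing that $M^3$ itself is then parabolic, i.e. it admits no positive minimal Green function.

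The key steps, in order, are as follows. First, in dimension two, a complete surface $N^2$ with $K\geq 0$ has quadratic area growth by Bishop--Gromov, so $\mathrm{Vol}(B^N_r)\leq \pi r^2$. Consequently $\mathrm{Vol}(B^{\tilde M}_r)\leq C r^3$ on $N\times\mathbb{R}$. Second, we pass to $M^3=\tilde M/\Gamma$ with $\Gamma$ acting freely by isometries. Volume growth of balls in $M$ is at most that of the cover, since $\pi$ restricted to $B^{\tilde M}_{\tilde x}(r)$ surjects onto $B^M_x(r)$ and is a local isometry. Hence $\mathrm{Vol}(B^M_x(r))\leq Cr^3$ as well. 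Cubic growth does not by itself give parabolicity, so we need the finer structure. For $Rc\geq 0$, the Li--Yau/Varopoulos criterion states that $M$ is non-parabolic iff $\int_1^\infty \frac{t}{\mathrm{Vol}(B_x(t))}\,dt<\infty$. We therefore need $\mathrm{Vol}(B^M_x(t))$ to grow at most quadratically, up to the borderline.

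Third, we get this by splitting off the line in $M$. The $\mathbb{R}$-factor in $\tilde M = N\times\mathbb{R}$ is preserved by isometries; more precisely, the de Rham/splitting structure shows $\Gamma$ preserves the splitting when $N$ has no line, and in general $\Gamma$ acts preserving the parallel field $\partial_s$ up to sign. There are two cases.
\begin{itemize}
\item If $\Gamma$ has a cocompact or infinite component along the $\mathbb{R}$-direction, or along a flat direction of $N$, then the quotient loses one dimension of growth, giving $\mathrm{Vol}\leq Cr^2$.
\item If $\Gamma$ acts trivially enough that $M$ retains the full $\mathbb{R}$ factor, then $M$ is isometric to $(N/\Gamma')\times\mathbb{R}$ or a flat $\mathbb{Z}_2$-twisted version. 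Then $M$ contains a line, and volume growth is $\mathrm{Vol}(B^{N/\Gamma'}_r)\cdot r\leq Cr^3$.
\end{itemize}

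The second case is the main obstacle, because cubic growth sits exactly at the borderline where $\int^\infty t/t^3\,dt<\infty$ and the Varopoulos test does not settle it. There we would instead argue directly on the product $N'\times\mathbb{R}$ with $N'=N/\Gamma'$ a surface with $K\geq0$. If $N'$ is compact, growth is linear, so $M$ is parabolic. If $N'$ is noncompact, then cubic growth forces $N'$ to have exactly Euclidean quadratic growth with $K\geq0$, hence $N'=\mathbb{R}^2$ flat by Bishop rigidity, and $M=\mathbb{R}^3$. That case is in alternative (a) anyway. Otherwise $N'$ has growth $o(r^2)$, and we expect to need a quantitative version; for a surface with $K\geq 0$ and sub-Euclidean asymptotic area ratio, growth is $\leq \theta r^2$ with $\theta<\pi$, which does not help directly. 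What we would try first is to use that Green functions on $N'\times\mathbb{R}$ are obtained by integrating the heat kernel, $G=\int_0^\infty p_{N'}(t)\,p_{\mathbb{R}}(t)\,dt$. Since $p_{N'}(t)\sim c/t$ and $p_{\mathbb{R}}\sim t^{-1/2}$, the integral $\int^\infty t^{-3/2}\,dt$ converges, so $M$ would actually be non-parabolic. This shows that case is genuinely non-parabolic, and then such $M$ must simply be checked to be diffeomorphic to $\mathbb{R}^3$: $N'$ noncompact with $K\geq0$ is diffeomorphic to $\mathbb{R}^2$ (or the quotient gives a Möbius/cylinder case with linear area growth, hence parabolic by the first case). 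We conclude that in all non-parabolic situations $M^3\cong\mathbb{R}^3$.
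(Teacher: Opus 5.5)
Your skeleton matches the paper's: invoke Theorem \ref{thm 3-mflds with non-negative Ricci}, split according to whether the deck group $\Gamma$ moves the $\mathbb{R}$-factor, and use that $\mathrm{Vol}(B_r)\le Cr^2$ forces parabolicity. Your first case is essentially the paper's Step (3), and it can be made precise. Suppose $\gamma=(\gamma_1,\tau_a)\in\Gamma$ with $\tau_a$ a translation by $a\neq 0$. Use the fundamental domain $N\times[0,|a|)$ for $\langle\gamma\rangle$, together with $d(o,\gamma_1^k o)\le |k|\,d(o,\gamma_1 o)$ and $\mathrm{Area}(B^N_r)\le \pi r^2$. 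This gives $\mathrm{Vol}(B_r)\le Cr^2$ already on $\tilde M/\langle\gamma\rangle$, hence on $M$.

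The second case, however, does not work as written. First, the goal ``(b) implies $M$ parabolic'' is false, because (a) and (b) overlap. If $N$ is a smoothed cone of angle $\alpha\in(0,2\pi)$, then $N\times\mathbb{R}$ satisfies (b), has $\mathrm{Vol}(B_r)\ge cr^3$ and so is non-parabolic, and is diffeomorphic to $\mathbb{R}^3$. The same example refutes two of your claims: that an upper bound $Cr^3$ forces $N'$ to have Euclidean area growth, and that otherwise $\mathrm{Area}(B^{N'}_r)=o(r^2)$. Also, an upper bound $\mathrm{Vol}\le Cr^3$ is not a ``borderline'' for the Varopoulos test; it gives no information at all, and the threshold is quadratic growth. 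The heat-kernel computation, which moreover presupposes quadratic area growth of $N'$, is therefore beside the point. In this case the topology of $N'$ decides the matter, not parabolicity.

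The step that actually carries the proof, and which you only assert in your final parenthetical, is this: a complete noncompact surface $N'$ with $K\ge 0$ and $\pi_1(N')\neq 0$ has linear area growth. This is where the paper does its work (Step (2)). By the Cheeger--Gromoll soul theorem the soul of $N'$ is a circle, so $N'$ is an open cylinder, or a M\"obius band, which you handle via its orientable double cover. A cylinder has two ends, hence contains a line, so by the splitting theorem it is a flat $S^1\times\mathbb{R}$. Then $\mathrm{Vol}(B^M_r)\le Cr^2$ and $M$ is parabolic.

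The clean way to organize the argument is the paper's contradiction setup. Assume $M\not\cong\mathbb{R}^3$. Non-parabolicity passes to $\tilde M$, which rules out compact $N$. Then $\pi_1(N)=0$ gives $N\cong\mathbb{R}^2$, so $\tilde M\cong\mathbb{R}^3$ and $\Gamma\neq 1$. Now either $\Gamma$ acts trivially on the $\mathbb{R}$-factor, handled by the soul-plus-splitting argument above, or some element translates it, which is your first case. Both contradict non-parabolicity, up to passing to an index-two subgroup if $\Gamma$ only reflects the line.

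Finally, your assertion that $\Gamma$ always preserves $\partial_s$ up to sign fails when $N$ is flat. Then $\tilde M=\mathbb{R}^3$ and the splitting is not canonical; consider a screw motion whose axis is orthogonal to $\partial_s$. That case needs a separate remark, for instance that a noncompact flat $3$-manifold not diffeomorphic to $\mathbb{R}^3$ has at most quadratic volume growth.
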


\pf
{\textbf{Step (1)}. By contradiction, assume $M^3$ is not diffeomorphic to $\mathbb{R}^3$. 

Then we consider the universal cover $\tilde{M}$ of $M^3$. By Theorem \ref{thm 3-mflds with non-negative Ricci}, we get that $\tilde{M}$ is isometric to $N^2\times \mathbb{R}$. 

Because $M^3$ is non-parabolic, $\tilde{M}= N^2\times \mathbb{R}$ is also non-parabolic. By $\pi_1(\tilde{M})= 0$, we know that $N^2$ is a complete non-compact surface with the sectional curvature $K(N^2)\geq 0$ and $\pi_1(N^2)= 0$. 

By Cheeger-Gromoll's soul theorem and $\pi_1(N^2)= 0$, we know that $N^2$ is diffeomorphic to $\mathbb{R}^2$. Hence $\tilde{M}$ is diffeomorphic to $\mathbb{R}^3$.  




\textbf{Step (2)}. Let $\Gamma = \pi_1(M)$ be the deck transformation group acting freely and 
isometrically on $\tilde{M} = N^2 \times \mathbb{R}$. 
Since $M$ is not diffeomorphic to $\mathbb{R}^3$, $\Gamma$ is nontrivial. 


Each $\gamma \in \Gamma$ decomposes as $\gamma = (\gamma_1, \gamma_2)$, where 
$\gamma_1 \in \mathrm{Isom}(N^2)$ and $\gamma_2 \in \mathrm{Isom}(\mathbb{R})$. Define
\begin{align}
p: \Gamma\rightarrow \mathrm{Isom}(\mathbb{R}), \quad \quad p(\gamma_1, \gamma_2)= \gamma_2, \quad \quad \forall (\gamma_1, \gamma_2)\in \Gamma. \nonumber 
\end{align}

If $p(\Gamma)$ is trivial, then $\Gamma$ acts only on $N^2$. Thus $M = (N^2 / \Gamma) \times \mathbb{R}$. By $M$ is non-parabolic, we get $S= N^2/\Gamma$ is a noncompact surface with $K \ge 0$ and non-zero fundamental group. 

From Cheeger-Gromoll's soul theorem, we get that $S$ is diffeomorphic to $S^1 \times \mathbb{R}$. This implies $S$ has two ends. By Cheeger-Gromoll's splitting theorem, we know that $S$ is isometric to a flat cylinder $S^1 \times \mathbb{R}$. Therefore, we have
\begin{align}
\varlimsup_{r \to \infty} \frac{V(B_p(r))}{r^2} < \infty. \nonumber 
\end{align}
It contradicts that $M^3$ is non-parabolic.

\textbf{Step (3)}. In the rest, we assume $p(\Gamma)$ is infinite cyclic generated by a translation on $\mathbb{R}$.

Then we get that $M$ is isometric to $F \times S^1$ (by taking the standard product metric on $F \times \mathbb{R}$ 
invariant under the $\mathbb{Z}$-action, where $F= N/\Gamma_0$ for some $\Gamma_0$). 

Thus, $M$ is isometric to $F \times S^1$ with a product metric, where $F$ is a complete surface with $K \ge 0$. And we get $V(B_p(r))\leq C r^2$. 
 
Hence, $\displaystyle \varlimsup_{r \to \infty} \frac{V(B_p(r))}{r^2} < \infty$, it is a contradiction again.
}
\qed

Now we can remove the topological restriction in \cite[Lemma $3.4$]{Xu-IOC1} as follows.
\begin{lemma}\label{lem 3-dim level set is connected}
{For a complete, non-parabolic Riemannian manifold $M^3$ with $Rc\geq 0$, if $b^{-1}(t)$ is a smooth surface for some $t> 0$, then $b^{-1}(t)$ is connected.
}
\end{lemma}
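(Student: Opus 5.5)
We prove that $b^{-1}(t)$ is connected by combining the topology of $M^3$ with the maximum principle for $b$. By Proposition \ref{prop topology of non-parabolic}, $M^3$ is diffeomorphic to $\mathbb{R}^3$, so $H_1(M^3;\mathbb{Z}/2)=0$ and $H^1_c$-type separation arguments apply. Fix $t>0$ regular, so $\Sigma=b^{-1}(t)$ is a smooth closed embedded surface. It is compact: $b$ is proper, because $b\to\infty$ at infinity on a non-parabolic manifold with $Rc\geq 0$ (Li--Yau Green function bounds give $G(q,x)\to 0$ as $\rho(x)\to\infty$). Also $b\to 0$ only at $q$.

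Suppose $\Sigma$ has at least two components $\Sigma_1,\Sigma_2,\dots$. We use the fact that in $\mathbb{R}^3$ every closed embedded surface separates: since $H_1(M;\mathbb{Z}/2)=0$, a loop meeting $\Sigma_i$ transversally exactly once cannot exist. Hence $M-\Sigma_i$ has exactly two components, one bounded (with compact closure) and one unbounded.

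Let $\Omega=\{b<t\}$, an open set with $\partial\Omega=\Sigma$ and containing $q$. Consider the component $\Omega_0$ of $\Omega$ containing $q$, and any other component $\Omega'$ of $\Omega$. If $\Omega'\neq\Omega_0$, then $\Omega'$ has compact closure (it lies in $\{b\le t\}$, which is compact by properness). On $\Omega'$ the function $b^{2-n}=G(q,\cdot)$ is harmonic, since $q\notin\Omega'$, and it equals $t^{2-n}$ on $\partial\Omega'$ while being $>t^{2-n}$ inside. This contradicts the maximum principle, so $\Omega$ is connected.

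The same argument applies to $\{b>t\}$. Any component $U$ of $\{b>t\}$ with compact closure would have $G\equiv$ boundary value $t^{2-n}$ on $\partial U$ but $G<t^{2-n}$ inside, again contradicting the minimum principle for a harmonic function. So every component of $\{b>t\}$ is unbounded.

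Now take two components $\Sigma_1,\Sigma_2$ of $\Sigma$. Since $\Omega$ is connected and both surfaces bound it, choose a path $\alpha$ in $\overline\Omega$ from a point just inside $\Sigma_1$ to a point just inside $\Sigma_2$. Locally, $\{b>t\}$ lies on the other side of each $\Sigma_i$ because $t$ is a regular value. Then:
\begin{itemize}
\item If the sides of $\Sigma_1,\Sigma_2$ in $\{b>t\}$ belong to the same component of $\{b>t\}$, join them by a path $\beta$ in $\{b>t\}$. The loop $\alpha\cup\beta$ crosses $\Sigma_1$ exactly once transversally, which contradicts the separation property.
\item If they belong to different components $U_1,U_2$, note that both are unbounded but $M\simeq\mathbb{R}^3$ has one end. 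Hmm; simpler is to use that $U_1$ is a component of $M-\overline{\Omega}$.
\end{itemize}

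For the second case the better route is a counting argument. Each component of $M-\Sigma_1$ containing $U_1$ must be the unbounded side of $\Sigma_1$. Since the whole of $\Omega$ together with all other $\Sigma_j$ lies on one side of $\Sigma_1$, one side of $\Sigma_1$ contains only $U_1$-points near $\Sigma_1$. If that side were bounded, then $U_1$ would be bounded (its boundary would be $\Sigma_1$ alone), which is impossible. So that side is unbounded. Hence $\Omega$, which contains $\Sigma_2$ and $q$, lies in the bounded side of $\Sigma_1$, and therefore $U_2$ together with $\Sigma_2$ lies in that bounded side. Then $U_2$ is bounded, a contradiction.

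The main obstacle is this last topological bookkeeping, namely rigorously showing that each $U_i$ is bounded by exactly the surfaces adjacent to it. I would handle it by working with the compact region $\overline{\{b\le t\}}$, whose boundary is $\Sigma$, and invoking that $\overline\Omega$ is connected and $M$ has one end, so $M-\overline\Omega$ has as many unbounded components as ends, namely one. Since all components of $\{b>t\}$ are unbounded, $\{b>t\}$ is connected. A Mayer--Vietoris computation with $H_1(M)=0$ then gives
\[
\tilde H_0(\Sigma)\cong \tilde H_0(\Omega)\oplus\tilde H_0(\{b>t\})=0,
\]
so $\Sigma$ is connected. This Mayer--Vietoris step is the cleanest final formulation, and one-endedness of $\mathbb{R}^3$ is what makes it work.
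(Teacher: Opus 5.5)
Your proof is correct, but it is organized differently from the paper's.

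\textbf{The paper's route.} It works one component at a time. Since $M^3\cong\mathbb{R}^3$, each component of $b^{-1}(t)$ encloses a compact region by Jordan--Brouwer separation. Either some component encloses a region missing $q$, or two nested components bound a shell missing $q$. In either case the maximum principle forces $G\equiv t^{-1}$ on that region, and unique continuation gives the contradiction.

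\textbf{Your route.} You argue globally. The maximum principle shows that $\{b<t\}$ is connected and that every component of $\{b>t\}$ is unbounded. One-endedness of $M$, applied to the compact set $\{b\le t\}$, then makes $\{b>t\}$ connected. Finally, the reduced Mayer--Vietoris sequence with $H_1(M)=0$ and $\tilde H_0(M)=0$ gives $\tilde H_0(\Sigma)=0$.

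\textbf{What each buys.} Your argument uses only $H_1(M)=0$ and one end, not the separation theorem for closed surfaces in $\mathbb{R}^3$. It also gets its contradictions directly from the strict inequalities $G>t^{-1}$ on $\{b<t\}$ and $G<t^{-1}$ on $\{b>t\}$, so no unique continuation is needed. The paper's argument is shorter and more geometric.

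You also justify properness of $b$ through the Li--Yau Green function bounds, which need only non-parabolicity. That fits the hypotheses of the statement better than the paper's appeal to Lemma \ref{lem point-wise bound of b}, which assumes maximal volume growth.

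\textbf{Two editorial points.}
\begin{itemize}
\item The bulleted detour (the mod $2$ intersection argument and the ``counting argument'') is unnecessary once you have the Mayer--Vietoris formulation, and it relies on Jordan--Brouwer, which your final argument avoids. Drop it, along with the ``Hmm'' aside.
\item Apply Mayer--Vietoris to an open cover, for example $A=\{b<t+\epsilon\}$ and $B=\{b>t-\epsilon\}$. Your maximum-principle arguments work at every level, so both sets are connected. By properness there are no critical values in $[t-\epsilon,t+\epsilon]$ for small $\epsilon$, so the flow of $\nabla b/|\nabla b|^2$ gives $A\cap B\cong\Sigma\times(-\epsilon,\epsilon)$.
\end{itemize}
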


\pf{From Lemma \ref{lem point-wise bound of b}, we know that $b^{-1}(t)$ is bounded.

If $\Omega_1, \Omega_2$ are two connected components of $b^{-1}(t)$, then the base point of $b$, denoted as $q$, is enclosed by the unique surface $\Omega_i$ (otherwise $G\equiv t^{-1}$ on the region enclosed by $\Omega_1$ and $\Omega_2$ from the maximum principle, from the unique continuation of harmonic function, we get the contradiction). 

Note $M^3$ is diffeomorphic to $\mathbb{R}^3$, hence the other $\Omega_{j}$ encloses one region $\Omega\subset M^3$, and $q\notin \Omega$. By the maximum principle again $G\equiv t^{-1}$ on $\Omega$, the contradiction follows from the unique continuation of harmonic function again.
}
\qed

For any $r> 0$, define 
\begin{align}
&\mathcal{T}\vcentcolon= \{t\in \mathbb{R}^+- \mathcal{S}: b^{-1}(t)\ \text{is diffeomorphic to}\ \mathbb{T}^2\}, \nonumber \\
&\mathcal{B}\vcentcolon= \{t\in \mathbb{R}^+- \mathcal{S}: \text{the genus of}\ b^{-1}(t)\ \text{is greater than}\ 1\}, \nonumber \\
&\mathcal{G}\vcentcolon= \{t\in \mathbb{R}^+- \mathcal{S}: b^{-1}(t)\ \text{is diffeomorphic to}\ \mathbb{S}^2\},\nonumber \\
&\mathcal{T}_r= \mathcal{T}\cap (0, r], \quad \quad \mathcal{B}_r= \mathcal{B}\cap (0, r], \quad \quad \mathcal{G}_r= \mathcal{G}\cap (0, r]. \nonumber 
\end{align}

\begin{lemma}\label{lem bad values has density 0}
{For a complete, non-parabolic Riemannian manifold $M^3$ with $Rc\geq 0$, we have 
\begin{align}
&\varliminf_{r\rightarrow\infty}\frac{\mathcal{L}^1(\mathcal{B}_r)}{r}= 0. \label{lower limit for higher genus is 0}\\
&\lim_{r\rightarrow\infty}\frac{\int_{\mathcal{B}_r\cup \mathcal{T}_r}dt\int_{b^{-1}(t)}R(b^{-1}(t))}{r}= 0. \label{limit of integral of bad curv}
\end{align}
And if $M^3$ has maximal volume growth, then 
\begin{align}
&\lim_{r\rightarrow\infty}\frac{\mathcal{L}^1(\mathcal{T}_r\cup \mathcal{B}_r)}{r}= 0. \label{limit of positive genus is 0}
\end{align}
}
\end{lemma}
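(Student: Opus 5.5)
The whole argument rests on combining the Gauss--Bonnet theorem on the level sets $b^{-1}(t)$ with the Gauss equation (\ref{scalar curv in term of Ricci and H-n-dim}) in dimension $n=3$. We then use Proposition \ref{prop integral along b direction-Ricci} for the error terms, and Lemma \ref{lem measure and integral ineq} for the maximal volume growth statement.

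\textbf{Step 1 (Gauss--Bonnet).} Fix $t\in \mathbb{R}^+-\mathcal{S}$. Then $b^{-1}(t)$ is a smooth closed surface: it is compact by Lemma \ref{lem point-wise bound of b}, and connected by Lemma \ref{lem 3-dim level set is connected}. Since $R(b^{-1}(t))=2K$, Gauss--Bonnet gives
\begin{align}
\int_{b^{-1}(t)}R(b^{-1}(t))=4\pi\chi(b^{-1}(t))=8\pi(1-g(t)). \nonumber
\end{align}
This equals $0$ for $t\in\mathcal{T}$ and is at most $-8\pi$ for $t\in \mathcal{B}$. Orientability holds because $b^{-1}(t)$ is a two-sided level set in $M^3\cong\mathbb{R}^3$ (Proposition \ref{prop topology of non-parabolic}). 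By the coarea formula we get, for every $r>0$,
\begin{align}
\int_{b^{-1}(\mathcal{T}_r\cup\mathcal{B}_r)}R(b^{-1}(t))|\nabla b| =\int_{\mathcal{T}_r\cup \mathcal{B}_r}dt\int_{b^{-1}(t)}R(b^{-1}(t))\leq -8\pi\,\mathcal{L}^1(\mathcal{B}_r)\leq 0. \nonumber
\end{align}
Measurability of $\mathcal{T},\mathcal{B}$ is handled by noting that the genus is locally constant on the open set $\mathbb{R}^+-\mathcal{S}$, since nearby regular level sets are diffeomorphic via the flow of $\nabla b/|\nabla b|^2$.

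\textbf{Step 2 (lower bound via the Gauss equation).} With $n=3$, (\ref{scalar curv in term of Ricci and H-n-dim}) together with $R(M^3)\ge 0$ gives pointwise
\begin{align}
R(b^{-1}(t))\geq \tfrac12 H^2-2Rc(\vec n)-|\Pi_0|^2\geq -2Rc(\vec n)-|\Pi_0|^2 . \nonumber
\end{align}
Integrating over $b^{-1}(\mathcal{T}_r\cup\mathcal{B}_r)\subseteq\{b\le r\}$ against $|\nabla b|$ yields
\begin{align}
\int_{\mathcal{T}_r\cup \mathcal{B}_r}dt\int_{b^{-1}(t)}R(b^{-1}(t))\geq -2\int_{b\leq r}\big[Rc(\vec n)+|\Pi_0|^2\big]|\nabla b| . \nonumber
\end{align}
By Proposition \ref{prop integral along b direction-Ricci} with $r^{n-2}=r$, the right-hand side is $o(r)$. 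Combining this with Step 1 proves (\ref{limit of integral of bad curv}). It also gives $8\pi\mathcal{L}^1(\mathcal{B}_r)\le o(r)$, which is in fact the full limit $\lim_{r\to\infty}\mathcal{L}^1(\mathcal{B}_r)/r=0$, stronger than (\ref{lower limit for higher genus is 0}).

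\textbf{Step 3 (maximal volume growth).} Apply (\ref{meas sup}) of Lemma \ref{lem measure and integral ineq} with $A=\mathcal{T}\cup\mathcal{B}$ and $n=3$, so that $t^{n-3}=1$. This gives
\begin{align}
\big(\mathrm{V}_M\big)\cdot\varlimsup_{r\to\infty}\frac{\mathcal{L}^1(\mathcal{T}_r\cup\mathcal{B}_r)}{r}\leq \varlimsup_{r\to\infty}\frac{\int_{b^{-1}(A_r)}R(b^{-1}(t))|\nabla b|}{6\cdot 3\omega_3\, r}\le 0 \nonumber
\end{align}
by Step 1. Since $\mathrm{V}_M>0$, this yields (\ref{limit of positive genus is 0}).

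I expect no serious obstacle; the main care points are checking that the torus contributes exactly zero, so that only the $H^2$ lower bound inside Lemma \ref{lem measure and integral ineq} can force the density of $\mathcal{T}$ to vanish, and justifying the measurability and coarea applications on the regular values.
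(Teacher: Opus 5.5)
Your proof is correct and follows the paper's argument. Gauss--Bonnet on the connected regular level sets gives the upper bound. The Gauss equation together with Proposition \ref{prop integral along b direction-Ricci} gives the $o(r)$ lower bound; you derive this directly, whereas the paper extracts it from Lemma \ref{lem measure and integral ineq}, whose left side is only $\geq 0$ when $\mathrm{V}_M$ may vanish. Finally, (\ref{meas sup}) with $\mathrm{V}_M>0$ handles the maximal volume growth case, exactly as in the paper.

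Because you keep the two-sided bound at each finite $r$ rather than passing to $\varlimsup$ as the paper does, you actually obtain the full limit $\lim_{r\to\infty}\mathcal{L}^1(\mathcal{B}_r)/r=0$, which is stronger than the stated $\varliminf$. The paper could also have obtained this from (\ref{meas inf}). The only slip is the constant in Step 3: it should be $(n-2)(n-1)n\omega_n=6\omega_3=8\pi$, not $6\cdot 3\omega_3$. This is harmless, since the right-hand side is $\leq 0$ either way.
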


\pf
{\textbf{Step (1)}. From Lemma \ref{lem measure and integral ineq} and Gauss-Bonnet's Theorem, we have 
\begin{align}
&0\leq \mathrm{V}_M\cdot \varlimsup_{r\rightarrow\infty}\frac{\mathcal{L}^1(\mathcal{B}_r)}{r} \leq \varlimsup_{r\rightarrow\infty}\frac{\int_{b^{-1}(\mathcal{B}_r)}R(b^{-1}(t))\cdot |\nabla b|}{8\pi r}, \nonumber\\
&= \varlimsup_{r\rightarrow\infty}\frac{\int_{\mathcal{B}_r} dt\int_{b^{-1}(t)}R(b^{-1}(t))}{8\pi r}\leq -\varliminf_{r\rightarrow\infty}\frac{\mathcal{L}^1(\mathcal{B}_r)}{r}\leq 0. \nonumber
\end{align}
Then we get  (\ref{lower limit for higher genus is 0}).

From Gauss-Bonnet's Theorem, we know $\int_{b^{-1}(t)}R(b^{-1}(t))\leq 0$ for any $t\in \mathcal{T}_r\cup \mathcal{B}_r$, then
\begin{align}
\varlimsup_{r\rightarrow\infty}\frac{\int_{\mathcal{T}_r\cup \mathcal{B}_r} dt\int_{b^{-1}(t)}R(b^{-1}(t))}{8\pi r}\leq  0. \label{limsup is no more than 0}
\end{align}

On the other hand, From Lemma \ref{lem measure and integral ineq} we have 
\begin{align}
&0\leq  \mathrm{V}_M\cdot \varliminf_{r\rightarrow\infty}\frac{\mathcal{L}^1(\mathcal{T}_r\cup \mathcal{B}_r)}{r} \leq \varliminf_{r\rightarrow\infty}\frac{\int_{b^{-1}(\mathcal{T}_r\cup \mathcal{B}_r)}R(b^{-1}(t))\cdot |\nabla b|}{8\pi r}, \nonumber\\
&=\varliminf_{r\rightarrow\infty}\frac{\int_{\mathcal{T}_r\cup \mathcal{B}_r} dt\int_{b^{-1}(t)}R(b^{-1}(t))}{8\pi r}. \label{liminf is 0}
\end{align}

Combining (\ref{liminf is 0}) and (\ref{limsup is no more than 0}), we get (\ref{limit of integral of bad curv}). 

\textbf{Step (2)}. By Lemma \ref{lem measure and integral ineq} and (\ref{limsup is no more than 0}), we get  
\begin{align}
&0\leq \mathrm{V}_M\cdot \varlimsup_{r\rightarrow\infty}\frac{\mathcal{L}^1(\mathcal{T}_r\cup \mathcal{B}_r)}{r} \leq \varlimsup_{r\rightarrow\infty}\frac{\int_{b^{-1}(\mathcal{T}_r\cup \mathcal{B}_r)}R(b^{-1}(t))\cdot |\nabla b|}{8\pi r}, \nonumber\\
&= \varlimsup_{r\rightarrow\infty}\frac{\int_{\mathcal{T}_r\cup \mathcal{B}_r} dt\int_{b^{-1}(t)}R(b^{-1}(t))}{8\pi r}\leq  0. \nonumber 
\end{align}
This implies $\displaystyle \varlimsup_{r\rightarrow\infty}\frac{\mathcal{L}^1(\mathcal{T}_r\cup \mathcal{B}_r)}{r}= 0$ by $\mathrm{V}_M> 0$, hence (\ref{limit of positive genus is 0}) follows.
}
\qed

The main result of this section is as follows.
\begin{theorem}\label{thm equa of integ curv}
{For a complete, non-parabolic Riemannian manifold $M^3$ with $Rc\geq 0$ and maximal volume growth, there is
\begin{align}
\lim_{r\rightarrow \infty}\frac{\int_{b\leq r}R\cdot |\nabla b|}{r}=8\pi \big[1- \mathrm{V}_{M}\big].\nonumber 
\end{align}
}
\end{theorem}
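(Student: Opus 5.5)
The plan is to specialize to $n=3$, integrate the Gauss equation against $|\nabla b|$ over $\{b\leq r\}$, and evaluate each resulting term with the asymptotic results already established. Note that $(\mathrm{V}_M)^{\frac{1}{n-2}}=\mathrm{V}_M$ and $n-2=1$ here.

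\textbf{Step 1: decomposition.} For $t\notin\mathcal{S}$, (\ref{scalar curv in term of Ricci and H-n-dim}) with $n=3$ gives on $b^{-1}(t)$
\begin{align}
R= R\big(b^{-1}(t)\big)+ 2Rc(\vec{n})+ |\Pi_0|^2- \frac{1}{2}H^2 . \nonumber
\end{align}
Since $\mathcal{L}^1(\mathcal{S})=0$, multiplying by $|\nabla b|$, integrating over $\{b\leq r\}$ and using the co-area formula gives
\begin{align}
\frac{\int_{b\leq r}R|\nabla b|}{r}= \frac{\int_{(0,r]\setminus\mathcal{S}}dt\int_{b^{-1}(t)}R\big(b^{-1}(t)\big)}{r}+ \frac{\int_{b\leq r}\big(2Rc(\vec{n})+|\Pi_0|^2\big)|\nabla b|}{r}- \frac{1}{2}\cdot\frac{\int_{b\leq r}|\nabla b|H^2}{r}. \nonumber
\end{align}
The second and third terms are integrable by Proposition \ref{prop integral along b direction-Ricci} and Proposition \ref{prop integral along b direction-H2}. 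Since $R\geq 0$, the left side is finite, so the first term is a legitimate integral as well.

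\textbf{Step 2: the easy limits.} By Proposition \ref{prop integral along b direction-Ricci}, the second term tends to $0$. By Proposition \ref{prop integral along b direction-H2} with $n=3$,
\begin{align}
\lim_{r\to\infty} r^{-1}\int_{b\leq r}|\nabla b|H^2 = 12\,\omega_3\mathrm{V}_M = 16\pi\mathrm{V}_M , \nonumber
\end{align}
so the third term tends to $-8\pi\mathrm{V}_M$.

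\textbf{Step 3: the intrinsic curvature term, which carries the real content.} For each $t\in\mathbb{R}^+\setminus\mathcal{S}$, the level set $b^{-1}(t)$ is a closed surface. It is compact by Lemma \ref{lem point-wise bound of b} and nonempty because $b$ is continuous with $b\to 0$ at $q$ and $b\to\infty$ at infinity. It is orientable, being a regular level set in $M^3\cong\mathbb{R}^3$ (Proposition \ref{prop topology of non-parabolic}), and connected by Lemma \ref{lem 3-dim level set is connected}. Hence $\mathbb{R}^+\setminus\mathcal{S}=\mathcal{G}\cup\mathcal{T}\cup\mathcal{B}$.
\begin{itemize}
\item For $t\in\mathcal{G}$, Gauss--Bonnet with $R=2K$ gives $\int_{b^{-1}(t)}R(b^{-1}(t))=8\pi$.
\item The contribution of $\mathcal{T}_r\cup\mathcal{B}_r$, divided by $r$, tends to $0$ by (\ref{limit of integral of bad curv}).
\item Maximal volume growth and (\ref{limit of positive genus is 0}) give $\mathcal{L}^1(\mathcal{G}_r)/r\to 1$.
\end{itemize}
Therefore the first term tends to $8\pi$.

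\textbf{Conclusion and main obstacle.} Summing the three limits yields $8\pi-8\pi\mathrm{V}_M$. The only delicate point is the topological bookkeeping in Step 3: every regular level set must be exactly one closed orientable surface, so that the genus trichotomy covers a.e.\ $t$. This is where connectedness and $M^3\cong\mathbb{R}^3$ are used. The density-zero statement (\ref{limit of positive genus is 0}), which relies on maximal volume growth, is what upgrades the inequality (\ref{ineq for non-parabolic}) to an equality. The remaining analytic justification, namely splitting the limit into three separately convergent pieces, is immediate because each piece has been shown to converge.
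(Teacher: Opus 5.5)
Your proposal is correct and follows essentially the same route as the paper. Both specialize the Gauss equation to $n=3$. Both dispose of the $2Rc(\vec{n})+|\Pi_0|^2$ term and the $-\frac{1}{2}H^2$ term via Propositions \ref{prop integral along b direction-Ricci} and \ref{prop integral along b direction-H2}, giving $0$ and $-8\pi\mathrm{V}_M$. Both evaluate the intrinsic term as $8\pi$ using connectedness of the level sets, Gauss--Bonnet on $\mathcal{G}$, and Lemma \ref{lem bad values has density 0}. Your explicit check that every regular level set is a nonempty, compact, orientable, connected surface, so that $\mathbb{R}^+\setminus\mathcal{S}=\mathcal{G}\cup\mathcal{T}\cup\mathcal{B}$, only spells out bookkeeping the paper leaves implicit.
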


\pf
{From Proposition \ref{prop topology of non-parabolic} we know that $M^3$ is diffeomorphic to $\mathbb{R}^3$. Now using Lemma \ref{lem 3-dim level set is connected}, we know each smooth surface $b^{-1}(t)$ is connected.

From the definition of $\mathcal{G}_r$ and Gauss-Bonnet's Theorem, we obtain
\begin{align}
\int_{b^{-1}(t)}R(b^{-1}(t))= 8\pi, \quad \quad \quad \forall t\in \mathcal{G}_r. \label{integral on surface}
\end{align}

Now from Lemma \ref{lem bad values has density 0} and (\ref{integral on surface}), we have 
\begin{align}
&\quad \lim_{r\rightarrow\infty}\frac{\int_{(0, r]- \mathcal{S}}dt\int_{b^{-1}(t)}R(b^{-1}(t))}{r} \nonumber \\
&= \lim_{r\rightarrow\infty}\frac{\int_{\mathcal{G}_r}dt\int_{b^{-1}(t)}R(b^{-1}(t))}{r}+ \lim_{r\rightarrow\infty}\frac{\int_{\mathcal{B}_r\cup \mathcal{T}_r}dt\int_{b^{-1}(t)}R(b^{-1}(t))}{r} \nonumber \\
&= 8\pi\cdot \lim_{r\rightarrow\infty}\frac{\mathcal{L}^1(\mathcal{G}_r)}{r}+ 0= 8\pi. \label{sphere integral key}
\end{align}

From the Gauss equation, we have 
\begin{align}
R(M^n)&= R\big(b^{-1}(t)\big)+ 2Rc(\vec{n})+ |\Pi_0|^2- \frac{1}{2}H^2. \label{scalar curv in term of Ricci and H} 
\end{align}

From Proposition \ref{prop integral along b direction-Ricci} and Proposition \ref{prop integral along b direction-H2}, using (\ref{scalar curv in term of Ricci and H}) and (\ref{sphere integral key}) we obtain
\begin{align}
\lim_{r\rightarrow \infty}\frac{\int_{b\leq r}R\cdot |\nabla b|}{r}= \lim_{r\rightarrow\infty}\frac{\int_0^rdt\int_{b^{-1}(t)}R(b^{-1}(t))}{r} - 8\pi \mathrm{V}_{M}= 8\pi \big[1- \mathrm{V}_{M}\big]. \nonumber 
\end{align}
}
\qed

\begin{cor}\label{cor Hamilton-Pinch conj}
{Assume $(M^3, g)$ is a complete Riemannian manifold with $Rc\geq \epsilon\cdot R\cdot g\geq 0$, where $\epsilon> 0$ is a fixed constant. Furthermore assume $(M^3, g)$ is maximal volume growth, then $M^3$ is isometric to $\mathbb{R}^3$.
}
\end{cor}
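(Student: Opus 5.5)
The plan is to combine Theorem \ref{thm equa of integ curv} with the pinching hypothesis. The pinching should make the weighted scalar curvature integral grow too slowly compared with $8\pi[1-\mathrm{V}_M]$, forcing $\mathrm{V}_M=1$; the rigidity part of Bishop--Gromov then gives the isometry with $\mathbb{R}^3$.

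First, maximal volume growth with $Rc\geq 0$ in dimension $3$ gives non-parabolicity, since $\mathrm{Vol}(B_q(r))\geq \mathrm{V}_M\omega_3 r^3$ makes $\int^\infty r/\mathrm{Vol}(B_q(r))\,dr<\infty$. So $b$, Theorem \ref{thm equa of integ curv} and Proposition \ref{prop integral along b direction-Ricci} are all available. Next, the pinching $Rc\geq \epsilon R g$ gives pointwise $Rc(\vec{n},\vec{n})\geq \epsilon R$. By Proposition \ref{prop integral along b direction-Ricci} with $n=3$,
\begin{align}
\lim_{r\to\infty}\frac{\int_{b\leq r}R\,|\nabla b|}{r}\leq \frac{1}{\epsilon}\lim_{r\to\infty}\frac{\int_{b\leq r}Rc(\vec{n},\vec{n})\,|\nabla b|}{r}=0. \nonumber
\end{align}
Comparing with Theorem \ref{thm equa of integ curv} then gives $8\pi[1-\mathrm{V}_M]=0$, that is, $\mathrm{V}_M=1$.

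Finally, $\mathrm{V}_M=1$ together with the monotonicity in Bishop--Gromov gives $\mathrm{Vol}(B_q(r))/(\omega_3r^3)\equiv 1$ for all $r>0$. The rigidity case of volume comparison then shows that every ball $B_q(r)$ is isometric to the Euclidean ball, so $M^3$ is isometric to $\mathbb{R}^3$. One could equally quote Lemma \ref{lem point-wise bound of b}, but the volume rigidity route is the direct one.

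The main point to watch is the set where $|\nabla b|=0$. There $\vec{n}$ is undefined, but this set has measure zero and contributes nothing to integrals weighted by $|\nabla b|$. Beyond that, the argument is essentially bookkeeping, since the substantial analysis has already been done in Theorem \ref{thm equa of integ curv} and Proposition \ref{prop integral along b direction-Ricci}. It is also worth checking that the hypothesis $R\geq 0$ (which follows from $Rc\geq 0$) keeps all the quotients nonnegative, so the limits can be compared directly.
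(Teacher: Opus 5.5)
Your proposal is correct and follows the paper's proof essentially verbatim. You use the pinching $Rc(\vec{n},\vec{n})\geq \epsilon R$ together with Proposition \ref{prop integral along b direction-Ricci} to force the scaled weighted scalar curvature integral to zero, compare with Theorem \ref{thm equa of integ curv} to get $\mathrm{V}_M=1$, and conclude by Bishop--Gromov rigidity; your explicit check that maximal volume growth gives non-parabolicity is a detail the paper leaves implicit.
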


\pf
{From Proposition \ref{prop integral along b direction-Ricci} and Theorem \ref{thm equa of integ curv}, we obtain
\begin{align}
0= \lim_{r\rightarrow \infty}\frac{1}{r}\int_{b\leq r}Rc(\vec{n})\cdot |\nabla b|\geq \epsilon\cdot \lim_{r\rightarrow \infty}\frac{1}{r}\int_{b\leq r}R\cdot |\nabla b|= \epsilon\cdot 8\pi(1- \mathrm{V}_{M^3}). \nonumber 
\end{align}
Hence we get $\mathrm{V}_{M^3}= 1$, which implies $M^3$ is isometric to $\mathbb{R}^3$ by the rigidity part of Bishop-Gromov's volume comparison Theorem.






}
\qed

\end{document}